\renewcommand{\arraystretch}{1.2}
\newtheorem{theorem}{Theorem}[section]
\newtheorem{lemma}[theorem]{Lemma}
\theoremstyle{definition}
\newtheorem{definition}[theorem]{Definition}
\newtheorem{example}[theorem]{Example}
\newtheorem{proposition}[theorem]{Proposition}
\newtheorem{corollary}[theorem]{Corollary}
\newtheorem{conjecture}[theorem]{Conjecture}
\newtheorem{assumption}[theorem]{Assumption}
\theoremstyle{remark}
\newtheorem{remark}[theorem]{Remark}
\numberwithin{equation}{section}
  \newcommand{\cF}{{\mathcal F}}
    \newcommand{\sE}{{\mathscr E}}
   \newcommand{\cM}{{\mathcal M}}
    \newcommand{\cW}{{\mathcal W}}
    \newcommand{\cH}{{\mathcal H }}
    \newcommand{\cA}{{\mathcal A}}
        \newcommand{\cD}{{\mathcal D}}
  \newcommand{\C}{{\mathbb C}}
  \newcommand{\Z}{{\mathbb Z}}
  \newcommand{\Q}{{\mathbb Q}}
\renewcommand{\H}{{\mathcal  H}}
\newcommand{\Jac}{{\rm  {Jac}}}
\newcommand{\cL}{\mathcal L}
\newcommand{\vir}{\mathrm{vir}}
\newcommand{\bs}{\mathbf{s}}
\newcommand{\bt}{\mathbf{t}}
\newcommand{\bq}{\mathbf{q}}
\newcommand{\bx}{\mathbf{x}}
\newcommand{\wt}{\widetilde{t}}
\newcommand{\pop}{\partial\over\partial}
\newcommand{\one}{\mathbf{1}}
\newcommand{\ogamma}{\vec{\gamma}}
\newcommand{\wpsi}{\bar\psi}
\newcommand{\LL}{\llangle[\big]}
\newcommand{\RR}{\rrangle[\big]}
\newcommand{\LD}{\Big\langle}
\newcommand{\RD}{\Big\rangle}
\newsavebox{\@brx}
\newcommand{\llangle}[1][]{\savebox{\@brx}{\(\m@th{#1\langle}\)}%
  \mathopen{\copy\@brx\kern-0.5\wd\@brx\usebox{\@brx}}}
\newcommand{\rrangle}[1][]{\savebox{\@brx}{\(\m@th{#1\rangle}\)}%
  \mathclose{\copy\@brx\kern-0.5\wd\@brx\usebox{\@brx}}}
  \newcommand{\<}{\langle}
  \renewcommand{\>}{\rangle}
\begin{document}
\title{Virasoro constraints in Quantum Singularity Theories}
\date{}



 \author{Weiqiang He and Yefeng Shen}

\maketitle

\begin{abstract}
We introduce Virasoro operators for any Landau-Ginzburg pair $(W, G)$ where $W$ is a non-degenerate quasi-homogeneous polynomial and $G$ is a certain group of diagonal symmetries.
We propose a conjecture that the total ancestor potential of the FJRW theory of the pair $(W,G)$ is annihilated by these Virasoro operators.
We prove the conjecture in various cases, including:
(1) invertible polynomials with the maximal group,
(2) some two-variable 
polynomials with the minimal group,
(3) certain Calabi-Yau polynomials with groups.
We also discuss the connections among Virasoro constraints, mirror symmetry of Landau-Ginzburg models, and Landau-Ginzburg/Calabi-Yau correspondence. 
\end{abstract}

{
\hypersetup{linkcolor=black}
\setcounter{tocdepth}{2} \tableofcontents
}

\setcounter{section}{-1}
\section{Introduction}
Virasoro constraints have been proposed in Gromov-Witten theory by Eguchi, Hori, and Xiong \cite{EHX} and S. Katz \cite{Katz}.
It becomes one of the most fundamental and fascinating conjectures in Gromov-Witten theory.
Despite significant developments in the literature \cite{EHX, EJX, LT, GP, DZ, G-ham, OP}, it remains as one of the most difficult conjectures in Gromov-Witten theory.
Virasoro constraints have also been proposed in various topics in enumerative geometry \cite{DZ, JT, CGT, OOP, MOOP}.

In this paper, we propose Virasoro constraints for certain Landau-Ginzburg pairs $(W, G)$, with $W$ a certain quasi-homogeneous polynomial and $G$ a certain group of symmetries of $W$.
For such a LG pair $(W, G)$, Fan, Jarvis, and Ruan constructed a  {\em Cohomological Field Theory} (in the sense of Kontsevich and Manin \cite{KM}) in \cite{FJR1, FJR}, based on a proposal of Witten \cite{Wit-eq}.
Originally it was called {\em quantum singularity theory} in \cite{FJR}.
Nowadays the theory constructed in \cite{FJR1, FJR} are widely called {\em Fan-Jarvis-Ruan-Witten theory} (or FJRW for short). The construction used both algebraic and analytic tools.
There are purely algebraic constructions of Cohomology field theories for the same LG pair by Polishchuk-Vaintrob \cite{PV}, and Kiem-Li \cite{KL}.
The three constructions are conjectured to be equivalent.
These theories are {\em LG A-model theories} in the study of mirror symmetry \cite{IV, BH, BHe, Kr, CIR, LLSS, HLSW, HPSV}.

\subsection{Admissible Landau-Ginzburg pairs and the state spaces}
Following the work of Fan, Jarvis, and Ruan \cite{FJR}, we let
$W: \mathbb{C}^n\to \mathbb{C}$  be a {\em non-degenerate quasi-homogeneous polynomial} of $n$ variables. Here the non-degeneracy means the $W$ has isolated critical points only at the origin and the weights (or degrees) of the variables are uniquely given by rational numbers.
We write
$${\rm wt}(x_i):=q_i\in\big(0, {1\over 2}\big]\cap \mathbb{Q}.$$

Let $G_W\leq \mathbb{G}_m^n$ be the {\em group of diagonal symmetries} of $W$ defined by
\begin{equation*}
\label{maximal-group}
G_W:=\Big\{ (\lambda_1, \cdots, \lambda_n)\in \mathbb{G}_m^n\bigg| W(\lambda_1\, x_1, \cdots, \lambda_n\, x_n)=W(x_1, \cdots, x_n)\Big\}.
\end{equation*}
There exists the {\em exponential grading element}
\begin{equation*}
\label{expo-element}
J_W:=(\exp(2\pi  \sqrt{-1} q_1), \cdots, \exp(2\pi \sqrt{-1} q_n))\in G_W.
\end{equation*}
\begin{definition}
[Admissible LG pairs]
\label{admissible-pair}
A subgroup $G\leq G_{W}$ is called {\em admissible} if $J_W\in G$.
\begin{itemize}
\item
We call the pair $(W,G)$ an {\em admissible LG pair} if $G$ is admissible.
\item We call $\<J_W\>$ the {\em minimal group} of $W$.
\item We call $G_W$ the {\em maximal group} of $W$ and write $G_W=G_{\rm max}$ or $G_W={\rm Aut}(W)$ sometimes.
\end{itemize}
\end{definition}
For an admissible LG pair $(W, G)$, there exists a super vector space with a non-degenerate bilinear pairing and a bigrading \cite{IV, BH, BHe, Kr}.
We call it the {\em state space} and denote it by $\cH_{W, G}$.
Let us briefly review the construction here.
\subsubsection{A state space $\cH_{W, G}$}
For each $\gamma\in G\leq \mathbb{G}_m^n$,
we write
\begin{equation}
\label{group-exponent}
\gamma:=\left(\exp(2\pi\sqrt{-1}\theta_1), \cdots, \exp(2\pi\sqrt{-1}\theta_n)\right),
\end{equation}
where for all $i$ such that $1\leq i\leq n$, $\theta_i\in [0, 1)\cap\Q$ are given uniquely.
Define the {\em degree shift number} of $\gamma$ to be
\begin{equation}
\label{degree-shift}
\iota_\gamma:={\rm age}(\gamma)-\sum_{i=1}^{n}q_i:=\sum_{i=1}^n(\theta_i-q_i).
\end{equation}
The fixed locus of $\gamma$ in $\C^n$ is denoted by ${\rm Fix}(\gamma)$.
It is a subspace of $\C^n$ of dimension
$$N_\gamma:=\dim_{\C}{\rm Fix}(\gamma).$$
Let $W_\gamma:=W|_{{\rm Fix}(\gamma)}$ be the restriction.
We denote $d{\bf x}_\gamma$ the standard top form on ${\rm Fix}(\gamma)$. 
Recall the {\em Jacobian algebra} of $W$ is given by $${\rm Jac}(W):=\C[x_1, \cdots, x_n]\bigg/\left({\partial W\over \partial x_1}=\cdots={\partial W\over \partial x_n}=0\right).$$
\begin{definition}
[A state space]
For an LG pair $(W,G)$, the {\em state space} $\cH_{W, G}$ is defined to be the direct sum of $G$-invariant spaces
$$\cH_{W, G}:=\bigoplus_{\gamma\in G}\cH_\gamma, \quad \cH_\gamma:=\left({\rm Jac}(W_\gamma)\cdot d{\bf x}_\gamma \right)^G.$$
Here $\gamma=(\lambda_1, \cdots, \lambda_n)\in G$ acts on $x_i$ and $dx_i$ both by multiplying $\lambda_i$.
We call an element in the subspace $\cH_\gamma$ \emph{narrow} if $N_\gamma=0$, otherwise it is \emph{broad}.
\end{definition}

Following the notation of \cite{Kr}, 
the state space $\cH_{W, G}$ is spanned by elements of the form $\alpha|\gamma\rangle$, where $\gamma\in G$ and $\alpha\in {\rm Jac}(W_\gamma)\cdot d{\bf x}_\gamma$.
If $x_i$ is fixed by $\gamma$, i.e. $\theta_i=0$, then we write $i\in W_\gamma$.
Consider a monomial representative
$$\alpha=[{\bf x}_\gamma^{\bf m}]\cdot d{\bf x}_\gamma=\left[\prod_{i\in W_\gamma} x_i^{m_i}\right]\cdot d{\bf x}_\gamma\in {\rm Jac}(W_\gamma)\cdot d{\bf x}_\gamma.$$
Let ${\rm wt}(\alpha)$ be the weight of the form ${\bf x}_\gamma^{\bf m}d{\bf x}_\gamma$, defined by
$${\rm wt}(\alpha):=\sum_{i\in W_\gamma}(m_i+1)q_i.$$

\subsubsection{A pairing on $\cH_{W, G}$}
We fix  a homogeneous basis of $\cH_{W,G}$, denoted by $$\Big\{\phi_a=[f_a]d\bx_{\gamma_a}|\gamma_a\>\mid \gamma_a\in G, [f_a]\in {\rm Jac}(W_{\gamma_a})\Big\}.$$ 
Let ${\rm Res}_{W_{\gamma_a}}$ be the Grothendieck residue pairing on ${\rm Jac}(W_{\gamma_a})$.
Then there is a natural nondegenerate bilinear pairing $(\cdot , \cdot )$ on $\cH_{W,G}$, defined by
\begin{equation}
\label{qst-pairing}
\eta_{ab}:=(\phi_a, \phi_b):=
\left\{
\begin{array}{ll}
0, & \textit{if}\quad \gamma_a\neq\gamma_b^{-1};\\
{\rm Res}_{W_{\gamma_a}}(f_a f_b \cdot d{\bx}_{\gamma_a}), & \textit{if}\quad \gamma_a=\gamma_b^{-1}.\\
\end{array}
\right.
\end{equation}

\subsubsection{Bigrading and parity}
We denote the {\em central charge} of the polynomial $W$ by
\begin{equation}
\label{central-charge}
\widehat{c}_W:=\sum_{i=1}^{n}(1-2q_i).
\end{equation}
Following \cite{IV, Kr}, we introduce a bigrading and a parity on $\cH_{W, G}$.
For an homogeneous element $$\phi_a:=\alpha|\gamma\>=[{\bf x}_\gamma^{\bf m}]\cdot d{\bf x}_\gamma|\gamma\>\in \cH_{W, G},$$ we assign a bigrading $(\mu_a^+,\mu_a^-)$,
\begin{equation}
\label{bigrading}
\left\{
\begin{array}{ll}
\mu_a^+:={\rm wt}(\alpha)+\iota_\gamma-{\widehat{c}_W\over 2}; \\
\mu_a^-:=N_{\gamma}-{\rm wt}(\alpha)+\iota_\gamma-{\widehat{c}_W\over 2}.
\end{array}
\right.
\end{equation}
and a parity
\begin{equation}
\label{parity}
|\phi_a|=(-1)^{N_\gamma}.
\end{equation}
We also define a complex degree
\begin{equation}
\label{complex-degree}
\deg_\C\phi_a:={1\over 2}\left(\mu_a^++\mu_a^-+\widehat{c}_W\right).
\end{equation}

\begin{example}\label{bigrforell}
For the Fermat cubic pair $(W=x_1^3+x_2^3+x_3^3, G=\<J_W\>),$
we have the following list for a basis of $\cH_{W,G}$.
This example will be discussed in Section \ref{fermat-cubic} with more details.

\begin{table}[H]
\caption{The state space of a Fermat cubic}
\begin{center}
\begin{tabular}{|c||c|c|c|c|}
  \hline
  $\phi_a$ &  $1|J\>$ &    $1|J^2\>$  & $d{\bf x}|J^0\rangle$ &   $x_1x_2x_3d{\bf x}|J^0\rangle$\\
  \hline
 $(\mu_a^+,\mu_a^-)$ & $(-{1\over 2}, -{1\over 2})$ & $({1\over 2}, {1\over 2})$   & $(-{1\over 2}, {1\over 2})$ & $({1\over 2}, -{1\over 2})$ \\
  \hline
 $|\phi_a|$ & $1$ & $1$ & $-1$ & $-1$\\
   \hline
 $\deg_\C\phi_a$ & $0$ & $1$ & ${1\over 2}$ & ${1\over 2}$\\
  \hline
\end{tabular}
\end{center}
\label{table-cubic}
\end{table}
\end{example}




\subsection{Virasoro operators for admissible LG pairs}
For $\ell\in \mathbb{R}$ and $n\in\mathbb{Z}_{\geq 0}$, we denote the Pochhammer symbol by
\begin{equation}
\label{pochhammer}
(\ell)_n:=
\begin{dcases}
\ell(\ell+1)\cdots (\ell+n-1), &\text{ if } n\geq 1.\\
1, & \text{ if }  n=0.
\end{dcases}
\end{equation}
We assign a variable $t_{m}^{a}$ to each $\phi_a z^m\in \cH_{W,G}[\![z]\!]$.
\begin{definition}
[Virasoro operators]
\label{def-virasoro}
For each integer $k\in\mathbb{Z}_{\geq -1}$, we introduce a differential operator
\begin{align}
L_k:=&-\left(\frac{3-\widehat{c}_W}{2}\right)_{k+1} {\pop t_{k+1}^{0}}\nonumber\\
&+\sum_{m=0}^\infty\left(\mu^+_a+m+\frac{1}{2}\right)_{k+1} t^a_m{\pop t_{m+k}^{a}}\nonumber\\
&+\frac{\hbar^2}{2}\sum_{m=-k}^{-1}(-1)^m\left(\mu^+_a+m+\frac{1}{2}\right)_{k+1}  \eta^{ab}{\pop t_{-m-1}^{a}}{\pop t_{m+k}^{b}}\label{virasoro-operator}\\
&+\frac{1}{2\hbar^2}\delta_{-1,k}\eta_{ab}t_0^at_0^b\nonumber\\
&-{\delta_{0, k}\over 4}\sum_{a}(-1)^{|\phi_a|}(\mu_a^+-{1\over 2})(\mu_a^++{1\over 2})\nonumber.
\end{align}
Here $\eta^{ab}$ is the $(a, b)$-th entry of the inverse matrix of the pairing matrix $(\eta_{ab})$.
\end{definition}
Following the work of Givental \cite{G-ham},
these differential operators are related to quantization of quadratic Hamiltonians.
This allows us to verify
\begin{proposition}\label{prop-virasoro}
The differential operators $\{L_k\}_{k\geq -1}$ satisfy the relations:
\begin{equation}
\label{Virasoro-relation}
[L_m, L_n]=(m-n)L_{m+n}.
\end{equation}
\end{proposition}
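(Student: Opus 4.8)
\emph{Proof proposal.} The plan is to realize the operators $L_k$ as Givental-type quantizations \cite{G-ham} of a family of infinitesimal symplectic transformations $\ell_k$ of the loop space of $\cH_{W,G}$ that span a copy of the Witt algebra $\{z^{m+1}\partial_z\}_{m\geq -1}$, and then to push the classical relations through the quantization map, controlling the single anomalous term. Equip $\mathcal{V}:=\cH_{W,G}(\!(z^{-1})\!)$ with the symplectic form $\Omega(f,g):={\rm Res}_{z=0}\big(f(-z),g(z)\big)\,dz$ built from the pairing \eqref{qst-pairing}, polarize $\mathcal{V}=\mathcal{V}_+\oplus\mathcal{V}_-$ with $\mathcal{V}_+:=\cH_{W,G}[z]$ and $\mathcal{V}_-:=z^{-1}\cH_{W,G}[\![z^{-1}]\!]$, and let $\{q_m^a,p_m^a\}$ be the resulting Darboux coordinates; the variables $t_m^a$ of Definition~\ref{def-virasoro} are related to these by the dilaton shift $q_m^a=t_m^a-\delta^{a,0}\delta_{m,1}$, where $\phi_0=\mathbf{1}$ is the unit, for which $\mu_0^+=-\widehat{c}_W/2$. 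Let $\mu$ be the grading operator with $\mu\,\phi_a=\mu_a^+\,\phi_a$. From $\iota_\gamma+\iota_{\gamma^{-1}}=n-N_\gamma-2\sum_i q_i$ and the fact that the residue pairing \eqref{qst-pairing} is nonzero only between classes whose weights ${\rm wt}$ add to $N_\gamma$, one gets $\mu_a^++\mu_b^+=0$ whenever $\eta_{ab}\neq 0$, i.e.\ $\mu$ is anti-self-adjoint for $(\cdot,\cdot)$. Consequently the operators
\begin{equation*}
\ell_k:=-z^{k}\Big(\mu+z\partial_z+\tfrac12\Big)\Big(\mu+z\partial_z+\tfrac32\Big)\cdots\Big(\mu+z\partial_z+k+\tfrac12\Big),\qquad k\geq -1
\end{equation*}
(an empty product for $k=-1$, so $\ell_{-1}=-z^{-1}$) are infinitesimally symplectic: $\Omega(\ell_k f,g)+\Omega(f,\ell_k g)=0$ holds thanks to both $\mu_a^++\mu_b^+=0$ and the shift by $\tfrac12$, via the reflection formula $(x)_{k+1}=(-1)^{k+1}(-x-k)_{k+1}$. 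Since $\ell_k$ sends $\phi_a z^m$ to $-(\mu_a^++m+\tfrac12)_{k+1}\,\phi_a z^{m+k}$, an elementary Pochhammer computation --- at bottom the Witt relation for $-z^{m+1}\partial_z$ twisted by $z^{1/2}$ and by $\mu$, using $(x)_{n+1}(x+n)_{m+1}=(x+n)(x)_{m+n+1}$ --- yields $[\ell_m,\ell_n]=(m-n)\ell_{m+n}$ in $\mathfrak{sp}(\mathcal{V})$.

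Quantizing the quadratic Hamiltonian $h_{\ell_k}:=\tfrac12\Omega(\ell_k\,\cdot\,,\cdot)$ and passing to the $t$-variables reproduces \eqref{virasoro-operator} up to the constant: the $q$-$p$ part of $h_{\ell_k}$ gives $\sum_m(\mu_a^++m+\tfrac12)_{k+1}\,t_m^a\,\partial_{t_{m+k}^a}$; the $p$-$p$ part gives $\tfrac{\hbar^2}{2}\sum_{m=-k}^{-1}(-1)^m(\mu_a^++m+\tfrac12)_{k+1}\,\eta^{ab}\,\partial_{t_{-m-1}^a}\partial_{t_{m+k}^b}$, the signs $(-1)^m$ coming from $f(-z)$ in $\Omega$; the $q$-$q$ part, present only for $k=-1$, gives $\tfrac{1}{2\hbar^2}\eta_{ab}t_0^at_0^b$; and the dilaton shift $q_1^0=t_1^0-1$, with $\mu_0^+=-\widehat{c}_W/2$, produces $-(\tfrac{3-\widehat{c}_W}{2})_{k+1}\,\partial_{t_{k+1}^0}$. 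For the commutators, Givental's quantization lemma \cite{G-ham} gives $[\widehat{h_{\ell_m}},\widehat{h_{\ell_n}}]=\widehat{h_{[\ell_m,\ell_n]}}+\mathcal{C}(\ell_m,\ell_n)=(m-n)\,\widehat{h_{\ell_{m+n}}}+\mathcal{C}(\ell_m,\ell_n)$, where the cocycle $\mathcal{C}(\ell_m,\ell_n)$ is the scalar produced by the double contraction of the $p$-$p$ part of one operator against the $q$-$q$ part of the other. Since $\ell_k$ shifts $z$-degree by $k$, only $\ell_{k\geq 1}$ has a $p$-$p$ part and only $\ell_{k\leq -1}$ a $q$-$q$ part, so within the range $k\geq -1$ the only pair with $\mathcal{C}\neq 0$ is $\{1,-1\}$. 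Hence $[L_m,L_n]=(m-n)L_{m+n}$ is automatic for every pair except $[L_1,L_{-1}]=2L_0$, and that relation forces the constant term of $L_0$ to equal $\tfrac12\mathcal{C}(\ell_1,\ell_{-1})$. The $p$-$p$ part of $L_1$ has coefficient $-(\mu_a^+-\tfrac12)(\mu_a^++\tfrac12)$ and the $q$-$q$ part of $L_{-1}$ is $\tfrac{1}{2\hbar^2}\eta_{ab}t_0^at_0^b$, so their double contraction (carrying the superparity signs) gives $\mathcal{C}(\ell_1,\ell_{-1})=-\tfrac12\sum_a(-1)^{|\phi_a|}(\mu_a^+-\tfrac12)(\mu_a^++\tfrac12)$ --- exactly twice the constant term of $L_0$ in \eqref{virasoro-operator} --- so the algebra closes with no central extension.

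The step I expect to require the most care is this final matching of scalar constants: one must carry every convention --- the sign in $\Omega$ and the $(-1)^m$ it produces in the $p$-$p$ term, the sign of the dilaton shift, the normal-ordering prescription, and the normalization of Givental's cocycle --- consistently through the computation so that the anomaly in $L_0$ emerges precisely as $-\tfrac14\sum_a(-1)^{|\phi_a|}(\mu_a^+-\tfrac12)(\mu_a^++\tfrac12)$. Alternatively --- and perhaps more economically in practice --- one can bypass the symplectic formalism and prove \eqref{Virasoro-relation} by substituting \eqref{virasoro-operator} into $[L_m,L_n]$ and collecting the five families of terms by hand, the only combinatorial inputs being the Pochhammer identities $(x)_p(x+p)_q=(x)_{p+q}$ and $(x)_{k+1}=(-1)^{k+1}(-x-k)_{k+1}$, together with the symmetry $\mu_a^++\mu_b^+=0$ on the support of $\eta$.
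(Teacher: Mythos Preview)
Your proposal is correct and follows essentially the same approach as the paper's own proof. The paper likewise realizes each $L_k$ as the Givental quantization of an infinitesimal symplectic operator $\cL_k:=z^{-1/2}D^{k+1}z^{-1/2}$ with $D=z(\partial_z+z^{-1}\theta)z$ (your $\ell_k$ agrees with $-\cL_k$ up to the harmless overall sign), invokes Lemma~\ref{pairing} for the anti-self-adjointness of the grading operator, proves the Witt relations for the $\cL_k$, and then uses the Givental cocycle lemma exactly as you describe---noting that only the pair $\{1,-1\}$ produces a nonzero cocycle and matching it to the constant term in $L_0$.
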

In general, a set of operators $\{L_k\}_{k\in\mathbb{Z}}$ are called {\em Virasoro operators} if
$$[L_m, L_n]=(m-n)L_{m+n}+{m^3-m\over 12}\cdot\delta_{m+n,0}\cdot c,$$
with some constant $c$ (which is called the {\em central charge of the Virasoro algebra}).
The relations become \eqref{Virasoro-relation} if we restrict to $k\geq -1$.
By abuse of the notations, we will call $\{L_k\}_{k\geq -1}$ in \eqref{virasoro-operator} the {\em Virasoro operators for admissible LG (A-model) pairs}.

\subsection{Virasoro constraints}

The essential ingredients in each construction \cite{FJR1, FJR, PV, KL} are moduli spaces of $W$-spin structures in \cite{FJR1, FJR}, or $G$-spin structures in \cite{PV} and \cite{KL}, over genus-$g$ orbifold curves decorated by the element
$$\ogamma=(\gamma_1, \cdots, \gamma_k)\in G^k.$$
We denote the moduli space by $\cW_{g, \ogamma}^G$. The moduli space carries a virtual fundamental classes, which will be denoted by $[\cW_{g, \ogamma}^G]^{\vir, \clubsuit}$.

Despite the different nature of the techniques, each construction gives a CohFT $\{\Lambda^{\clubsuit, W, G}_{g,k}\}$ on the isomorphic underlying state space $\cH_{W, G}$. Here $g$ and $k$ satisfies the stability condition  $2g-2+k>0$.
We denote the CohFTs by $\Lambda^{\clubsuit}$, where
$$\clubsuit={\rm FJRW}, {\rm PV}, {\rm KL}$$
stands for the abbreviation of each theory.

We will mainly consider FJRW theory in our paper, but the setting also applies to the other two.
Choosing a set of elements $\{\phi_{i}\in \cH_{\gamma_i}\}$,
the linear maps
$$\Lambda^{\clubsuit, W, G}_{g,k}: \cH_{W, G}^{\otimes k}\to H^*(\overline{\cM}_{g,k}, \C)$$
produces intersection numbers, called {\em ancestor invariants}
\begin{equation}
\label{ancestor-inv}
\LD \prod_{i=1}^{k} \tau_{\ell_i}(\phi_{i})
\RD_{g, k}^{\clubsuit, (W, G)}
=\int_{\overline{\cM}_{g,k}} \Lambda^{\clubsuit, W, G}_{g,k}(\phi_1\otimes \ldots \otimes \phi_k)\cdot\prod_{i=1}^{k}\psi_{i}^{\ell_i},
\end{equation}
where $\phi_i\in \cH_{W, G}$, $\psi_i$'s are the first chern classes of cotangent line bundles on coarse moduli of spaces $\overline{\cM}_{g,n}$.

\subsubsection{The total ancestor potential}
We write $$\bt(z):=\sum_{m\geq 0}\sum_{a}t_{m}^{a}\phi_a z^m.$$
As in \cite[Section 1.3]{Ge}, we assign parity for $t_{m}^{a}$, which coincides with that of $\phi_a$, and moving $t_{m}^{a}$ across $\phi_b$ will give a sign, i.e.
$$t_{m}^{a}\phi_b=(-1)^{|t_{m}^{a}|\cdot|\phi_b|}\phi_bt_{m}^{a}.$$
The {\em total ancestor potential} is given by
\begin{equation}
\label{ancestor-qst}
\mathcal{A}^{\clubsuit}_{W,G}
:=\exp\left(\sum_{g}\hbar^{2g-2}\sum_{k}{1\over k!}\LD\prod_{i=1}^{k}\bt(\psi_i)
\RD_{g,k}^{\clubsuit, (W, G)}\right).
\end{equation}

The three CohFTs constructed in the work of Fan-Jarvis-Ruan \cite{FJR1, FJR}, Polishchuk-Vaintrob \cite{PV}, and Kiem-Li \cite{KL} are conjectured to be equivalent.
In particular, the pairing in \eqref{qst-pairing} matches the intersection pairing of Lefschetz thimbles in FJRW theory \cite{FJR}, the perfect pairing on intersection homology in KL theory \cite{KL}, and the Mukai pairing on Hochchild homology of $\Gamma$-equivariant matrix factorizations of $W$ \cite{PV}.
In general, the equivalence is known when restricting to a subspace of {\em narrow elements} in $\cH_{W, G}$ \cite{CLL}.
Numerically, if we consider the total ancestor potentials, the equivalence are verified for more cases \cite{Gue, HLSW, HPSV}.


\subsubsection{Virasoro conjecture for admissible LG pairs}
Now we propose a Virasoro conjecture for the admissible LG A-model pairs. 
\begin{conjecture}
[Virasoro conjecture for admissible LG A-model pairs]
\label{VC}
For any admissible LG pair $(W, G)$,
the total ancestor potentials $\mathcal{A}^{\clubsuit}_{W,G}$ in FJRW/PV/KL theory satisfy the Virasoro constraints
	$$L_k\mathcal{A}^{\clubsuit}_{W,G}=0, \quad \textit{for all } k\geq -1.$$
\end{conjecture}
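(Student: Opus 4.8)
The plan is to reduce Conjecture~\ref{VC}, in each case where it can be established, to the Virasoro constraints for the Kontsevich--Witten tau function via Givental's quantization of quadratic Hamiltonians \cite{G-ham} and Teleman's reconstruction of semisimple cohomological field theories. Concretely: whenever the genus-zero FJRW Frobenius manifold of $(W,G)$ is generically semisimple, the total ancestor potential $\mathcal{A}^{\clubsuit}_{W,G}$ of \eqref{ancestor-qst} is, after choosing a semisimple point, of the form $\widehat{\Psi}\,\widehat{R}\,\prod_{i} Z^{\mathrm{KW}}$, where $Z^{\mathrm{KW}}$ is the Kontsevich--Witten tau function (one factor per idempotent), $\Psi$ is the transition matrix to normalized canonical coordinates, and $R(z)$ is the homogeneous $R$-matrix determined by the conformal structure (grading $\deg_\C$). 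The operators $L_k$ of Definition~\ref{def-virasoro} are engineered so that $\widehat{\Psi}^{-1}\widehat{R}^{-1} L_k \widehat{R}\,\widehat{\Psi}$ is, up to the constant corrections recorded in \eqref{virasoro-operator}, the diagonal sum $\bigoplus_i \ell_k$ of the standard Virasoro operators annihilating $Z^{\mathrm{KW}}$; here the role of the ``Hodge grading operator'' of Gromov--Witten theory is played by the grading with eigenvalues $\mu_a^+$, and the dilaton shift in the direction $\phi_0 z$ together with the constant $\tfrac{3-\widehat{c}_W}{2}$ reproduces the shift responsible for the leading term. Granting this intertwining, $L_k\mathcal{A}^{\clubsuit}_{W,G}=0$ for $k\geq -1$ follows from $\ell_k Z^{\mathrm{KW}}=0$, i.e. from the fact that $\mathcal{A}^{\clubsuit}$ is built from the Virasoro-constrained Kontsevich--Witten function by quantized operators that conjugate Virasoro into Virasoro.

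To execute this for the invertible case with the maximal group, I would first invoke Landau--Ginzburg mirror symmetry in the form $\mathcal{A}^{\mathrm{FJRW}}_{W,G_{\mathrm{max}}}=\mathcal{A}^{\mathrm{SG}}_{W^T}$, the Saito--Givental total ancestor potential of the Berglund--H\"ubsch transpose $W^T$ (cf. \cite{HLSW} and references therein); since Saito's Frobenius structure is conformal with central charge $\widehat{c}_{W^T}=\widehat{c}_W$ and is generically semisimple, Givental's formula applies, and one then checks that under the mirror map the grading operator of the Saito Frobenius manifold is conjugate to $\mathrm{diag}(\mu_a^+)$, so that the conjugated operators above are exactly $\bigoplus_i\ell_k$. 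The two-variable homogeneous Fermat case $x_1^N+x_2^N$ with $\langle J_W\rangle$ is handled the same way: the Frobenius manifold is again generically semisimple --- one can in fact write down the $R$-matrix explicitly, or compare with the $A$-type singularity / an orbifold $\mathbb{P}^1$ via the Landau--Ginzburg/Calabi--Yau correspondence --- and the same intertwining yields the claim. For the Calabi--Yau polynomials with admissible groups, the cleanest route is the Landau--Ginzburg/Calabi--Yau correspondence itself: on a semisimple deformation the FJRW ancestor potential is related by a Givental-type symplectomorphism to the Gromov--Witten ancestor potential of the mirror hypersurface (or to a theory whose Virasoro constraints are already known), and one transports the constraints across using that such symplectomorphisms, being quantizations, conjugate Virasoro operators to Virasoro operators; the bookkeeping again reduces to matching $\mu_a^+$, $\widehat{c}_W$, and the parity $(-1)^{|\phi_a|}$ with the Hodge-theoretic data on the other side.

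The step I expect to be the main obstacle is the intertwining computation together with the verification of semisimplicity. The intertwining is, in principle, a finite Lie-theoretic calculation in Givental's Fock-space formalism --- conjugating the quadratic Hamiltonians generating $L_k$ by $\widehat{\Psi}$ and $\widehat{R}$, as in the proof of Proposition~\ref{prop-virasoro} --- but matching every constant, in particular the genus-zero term $\tfrac{1}{2\hbar^2}\delta_{-1,k}\eta_{ab}t_0^at_0^b$ and the anomaly term $-\tfrac{\delta_{0,k}}{4}\sum_a(-1)^{|\phi_a|}(\mu_a^+-\tfrac12)(\mu_a^++\tfrac12)$, is delicate, and is the analog of the subtlest parts of Givental's proof of the Virasoro constraints for semisimple Gromov--Witten theory \cite{G-ham,DZ}; the parity $(-1)^{|\phi_a|}$ in the broad sectors adds a genuinely new wrinkle absent in the geometric case. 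Semisimplicity is automatic on a dense open locus for invertible $W$ with the maximal group and for the two-variable Fermat families, but for the Calabi--Yau polynomials with groups the Frobenius manifold can degenerate along the loci of interest; there one must argue that $\mathcal{A}^{\clubsuit}_{W,G}$ is still given by a convergent Givental formula near a semisimple point and that the Virasoro constraints survive the relevant limit (or the analytic continuation used in the Landau--Ginzburg/Calabi--Yau correspondence). This last point is where the argument is least automatic and must be carried out case by case, which is why the conjecture is stated in full generality but proved only in the families listed.
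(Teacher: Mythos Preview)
Your approach for the semisimple cases --- invertible polynomials with the maximal group, and two-variable homogeneous Fermat with the minimal group --- is essentially the paper's: invoke Givental's Theorem~\ref{semi-simple-Virasoro} after establishing generic semisimplicity, via LG mirror symmetry to Saito's Frobenius manifold in the first case (Theorem~\ref{thm-HPSV}) and via a direct computation of the quantum Euler class in the second (Proposition~\ref{two-fermat-semisimple}). The intertwining calculation you sketch is exactly Givental's argument in \cite{G-ham}, and the paper simply quotes it rather than redoing it.

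There is, however, a genuine gap in your treatment of the Calabi--Yau cases, where the Frobenius manifolds are \emph{not} generically semisimple and Givental's formula does not apply. The paper's argument there splits into two entirely different methods, neither of which uses semisimplicity or an $R$-matrix. For $\widehat{c}_W\geq 3$ and $G\leq \mathrm{SL}_n(\C)$ (Corollary~\ref{virasoro-high CY}), the proof is a pure degree count: one shows that every nonzero homogeneous $\phi_a\in\cH_{W,G}$ with $\gamma_a\neq J$ satisfies $\deg_\C\phi_a\geq 1$, which collapses the Virasoro constraints to the string and grading equations (Lemma~\ref{thm-lc}, the FJRW analogue of \cite[Theorem~7.1]{Ge}). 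No symplectomorphism, no semisimple point, no limit argument is involved. For the three-variable Fermat CY with $\langle J_W\rangle$ (Theorem~\ref{virasoro-cubic}), the proof transports the Virasoro constraints for the \emph{elliptic curve} --- proved by Okounkov--Pandharipande \cite{OP} by combinatorial, non-semisimple methods --- first through the ancestor/descendent relation (checking that $\widehat{S}^{\sE}$ commutes with $L_k^{\sE}$ because the $S$-matrix \eqref{S-cubic} is nilpotent of a particularly simple form) and then through the holomorphic Cayley transformation $\mathscr{C}^{\rm hol}_{\tau_*}$ of quasimodular forms from \cite{LSZ}. The key observation is that $\mathscr{C}^{\rm hol}_{\tau_*}$ acts only on the modular parameter and therefore commutes with $L_k$ on the nose; this replaces your proposed ``Givental-type symplectomorphism.'' Your suggestion of arguing by analytic continuation from a semisimple point cannot work here, since there is no such point.

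Finally, note that the statement is a \emph{conjecture}; the paper does not prove it in full generality, only for the families listed in Theorems~\ref{thm-max}, \ref{general-pillow}, and~\ref{thm-cy}.
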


\begin{remark}
When $k=-1$, the constraint $L_{-1}\mathcal{A}^{\clubsuit}_{W,G}=0$ always holds true  in each theory \cite{FJR, PV, KL} as it is equivalent to the {\em string equations}.
When $k=0$, the last term in \eqref{virasoro-operator} can be rewritten in terms of the central charge and Euler characteristic of the LG pair by a supertrace formula \eqref{hodge-rr}.
If $\widehat{c}_W=3$, the supertrace term ${\rm Str}(\theta^2-{1\over 4})$ still vanishes and the constraints by $L_0$ is equivalent to a {\em grading equation} \eqref{grading-equation}, which is determined by an Euler vector that arises in the quantum singularity theory. By the {\em dilaton equation} \eqref{dilaton-operator}, such an equivalence still holds when $\widehat{c}_W\neq3.$
\end{remark}


\subsection{Main results}
In this paper, we study Virasoro Conjecture \ref{VC} in various situations.
We will mostly focus on the FJRW theory.
Part (2) of Theorem \ref{thm-max} is stated for PV theory. The result is slightly stronger there than in the FJRW theory, due to some technique advantages.

\subsubsection{Virasoro constraints for semi-simple Frobenius manifolds}
The genus zero invariants in \eqref{ancestor-inv} gives a Frobenius manifold in the sense of Dubrovin \cite{Du}.
If the Frobenius manifold is {\em generically semi-simple}, i.e., the Frobenius algebra is semi-simple at a generic point of the Frobenius manifold, the total ancestor potential is uniquely constructed from the Frobenius manifold by the famous Givental-Teleman formula in \eqref{gt-formula} \cite{G-higher, T}.
By writing the Virasoro operators using quantization operators of certain quadratic Hamiltonians, 
Givental proved the formula \eqref{gt-formula} after a modification 
is annihilated by Virasoro operators \cite[Theorem 7.7]{G-ham}.

By applying Givental's result, we show the LG A-models of two types of admissible LG pairs have generically semisimple Frobenius manifolds and thus Virasoro Conjecture \ref{VC} hold for them. Both types are from invertible polynomials, which are
of the form $\sum_{i=1}^{n}\prod_{j=1}^{n}x_j^{a_{ij}},$
with a matrix $\left(a_{ij}\right)_{n\times n}\in {\rm GL}(n, \mathbb{Q})$.
\begin{theorem}
\label{thm-max}
Consider an admissible pair $(W, G_W)$ with an invertible polynomial $W$.
\begin{enumerate}
\item
If $W$ has no weight-${1\over 2}$ chain variable, then
$L_k \mathcal{A}^{\clubsuit=\rm FJRW}_{W,G_W}=0$ for all $k\geq -1$.
\item
The equation $L_k \mathcal{A}^{\clubsuit=\rm PV}_{W,G_W}=0$ holds for all $k\geq -1$.
\end{enumerate}
\end{theorem}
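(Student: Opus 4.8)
The plan is to reduce Theorem \ref{thm-max} to Givental's Virasoro theorem for semisimple (ancestor) potentials, \cite[Theorem 7.7]{G-ham}, by verifying the two hypotheses it requires: that the underlying Frobenius manifold of the FJRW (resp. PV) theory of $(W, G_W)$ is generically semisimple, and that the resulting calibration/Euler-field data matches the specific normalization that makes the operators $L_k$ of Definition \ref{def-virasoro} the image of the standard quadratic Hamiltonians under quantization. The first and technically central input is semisimplicity. For an invertible polynomial $W$ with the maximal group $G_W$, the FJRW Frobenius manifold has an explicit description coming from Berglund--H\"ubsch--Krawitz mirror symmetry \cite{BH, BHe, Kr, HLSW}: the $(W, G_W)$ A-model is isomorphic, as a Frobenius manifold, to the Saito--Givental B-model (primitive form theory) of the Berglund--H\"ubsch transpose polynomial $W^T$ with its trivial-group structure, and this B-model is generically semisimple because its small quantum product at a generic point of the deformation space is the Jacobian ring $\mathrm{Jac}(W^T)$ localized along a generic value, which is \'etale over the base and hence semisimple. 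So the first step is: \emph{invoke (or reprove, in the narrow-plus-broad generality at hand) the LG mirror symmetry theorem and conclude generic semisimplicity}, taking care of the hypothesis in part (1) that $W$ has no weight-$\tfrac12$ chain variable, which is exactly the condition guaranteeing the FJRW CohFT is known to agree with the Saito--Givental CohFT (this obstruction is absent on the PV side, whence part (2) is unconditional).

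Granting semisimplicity, the second step is to apply the Givental--Teleman classification \eqref{gt-formula}: the total ancestor potential $\mathcal{A}^{\clubsuit}_{W,G_W}$ equals $\widehat{R}\,\widehat{\Delta}^{-1}$ applied to a product of Witten--Kontsevich tau-functions, for the $R$-matrix and dilaton-shift determined canonically by the semisimple Frobenius manifold at a chosen semisimple point. Step three is to feed this into Givental's Hamiltonian computation: he shows that after conjugating the Virasoro operators by the same $\widehat{R}$ and translating by the dilaton shift, the operators $L_k$ become the Virasoro operators for $\mathrm{rank}$-many copies of the point theory, which manifestly annihilate the product of tau-functions; the only genuine work is bookkeeping to confirm that the "input data" of the $L_k$ in \eqref{virasoro-operator} — the shift $(3-\widehat{c}_W)/2$ in the $\partial/\partial t^0_{k+1}$ term, the spectrum $\{\mu_a^+ + \tfrac12\}$ appearing in the Pochhammer factors, the pairing $\eta$, and the $k=0$ anomaly term $-\tfrac14\sum_a(-1)^{|\phi_a|}(\mu_a^+ - \tfrac12)(\mu_a^+ + \tfrac12)$ — are precisely the combinatorial shadow of the grading operator (the Hodge-type grading with eigenvalues $\mu_a^+$) and the Euler vector field of the FJRW Frobenius manifold. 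Here one uses that for a quasihomogeneous $W$ the Euler field has the standard form dictated by the bigrading \eqref{bigrading}, and that the "conformal dimension" appearing in Givental's setup is $\widehat{c}_W$ (equivalently $\dim = \widehat{c}_W$, with $\mu_0^+ = -\widehat{c}_W/2$ for the unit), so that his $L_k$ and ours coincide on the nose.

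A subtlety worth isolating as step four: Givental's theorem is most cleanly stated for the \emph{descendant} potential, while Conjecture \ref{VC} is about the \emph{ancestor} potential, and the two are related by the quantized operator $\widehat{S}_\tau$ (the fundamental solution evaluated at the point $\tau$). One must check that the Virasoro constraints are compatible with this ancestor/descendant conjugation — i.e. that $\widehat{S}_\tau L_k \widehat{S}_\tau^{-1}$ again annihilates the descendant potential, or equivalently that the $L_k$ as written in \eqref{virasoro-operator} are the \emph{ancestor} Virasoro operators. This is standard (it is how the Virasoro constraints are phrased in Gromov--Witten theory at a point on the big phase space, cf.\ \cite{EJX, G-ham}), but it requires that the Frobenius manifold be semisimple at the reference point $\tau$ we expand around, which is not a generic point; one circumvents this by the usual argument that the ancestor potential is independent of the (semisimple) expansion point and the Virasoro operators transform consistently, so it suffices to establish the constraints after moving to a generic semisimple point and then specializing back.

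The main obstacle is \textbf{not} the quantization bookkeeping of step three — that is Givental's theorem applied essentially verbatim — but rather step one: pinning down \emph{generic semisimplicity} of the full (narrow $\oplus$ broad) FJRW/PV Frobenius manifold for an \emph{arbitrary} invertible $W$ with the maximal symmetry group, including the chain and loop atomic types, and confirming that the identification with the Saito--Givental B-model of $W^T$ holds at the level of Frobenius manifolds (not merely Frobenius algebras). For the PV theory in part (2) this is cleanest because the matrix-factorization construction is compatible with the known B-model comparisons and has no weight-$\tfrac12$ obstruction, which is exactly why part (2) is stated there and is slightly stronger; for FJRW in part (1) the weight-$\tfrac12$-chain hypothesis is the precise technical price of currently available mirror-symmetry and CohFT-comparison results, and the proof should flag this honestly rather than hide it.
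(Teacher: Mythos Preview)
Your proposal is correct and takes essentially the same approach as the paper: invoke the mirror theorems (\cite{HLSW} for FJRW under the weight-$\tfrac12$ chain hypothesis, \cite{HPSV} for PV unconditionally) to identify the A-model ancestor potential with the Saito--Givental potential of $W^T$, whose Frobenius manifold is generically semisimple, and then apply Givental's semisimple Virasoro theorem. The one ingredient you gloss over that the paper isolates as a separate lemma is that $\mu_a^+ = \mu_a^-$ for every $\phi_a \in \cH_{W,G_W}$ when $W$ is invertible with maximal group, which is precisely what makes the $\rho$-term in the Euler field vanish and hence forces Givental's operators $L_k^{\rm Giv}$ to coincide on the nose with the $L_k$ of Definition~\ref{def-virasoro}.
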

According to the mirror theorems in \cite{HLSW, HPSV}, the LG A-models in Theorem \ref{thm-max} are equivalent to Saito-Givental B-models of their mirror polynomials.
The mirror LG B models are generically semi-simple as the deformed polynomials are of Morse type in general.

Semisimple Frobenius manifolds also exists for admissible LG A-model pairs even if $G\neq G_W$.
For example, for any invertible polynomial of two variables, if we take $G$ to be the minimal group $\<J\>$.
Then the FJRW theory of $(W, \<J\>)$ may not be isomorphic to any admissible LG pair of invertible polynomial with a maximal group $G_W$.
We consider some examples of two-variable invertible polynomials in \cite[Section 4]{Francis}.
We compute  the quantum multiplication of the quantum Euler vector field for these examples.
Then by applying the criteria of semisimplicity descried in \cite{Abr}, we obtain the underlying Frobenius manifolds are generically semisimple.
As a consequence, we obtain
\begin{proposition}
\label{general-pillow}
Virasoro Conjecture \ref{VC} holds for all LG pairs $(W, \<J\>)$ if
$$W=x^4+y^4, x^3+y^6, x^3+y^9, x^4+y^6, x^3+xy^8.$$
\end{proposition}

\subsubsection{LG pairs of Calabi-Yau type} 
If a polynomial $W(x_1, \cdots, x_n)$ is of {\em Calabi-Yau type}, i.e., $\widehat{c}_W=n-2$,
the hypersurface $(W=0)$ and the quotient space induced by the $G$-action are Calabi-Yau varieties.
In Witten's work of Gauge linear sigma models (GLSM) \cite{Wit}, the Gromov-Witten theory of the quotient Calabi-Yau variety has a deep connection to the LG A-model of the pair $(W, G)$. This is called the {\em Landau-Ginzburg/Calabi-Yau correpsondence}.
By comparing the FJRW theory with the Gromov-Witten theory of the Calabi-Yau counterpart, we obtain
\begin{theorem}
\label{thm-cy}
Let $W$ be an invertible polynomial of Calabi-Yau type.
For all $k\geq -1$, the Virasoro constraints $L_k \mathcal{A}^{\clubsuit=\rm FJRW}_{W,G}=0$ hold for the following admissible LG pairs $(W, G)$:
\begin{enumerate}
\item
$\widehat{c}_{W}\geq 3$ and $G\leq {\rm SL}(\C)$, under Assumption \ref{alge cycle};
\item
or 
$W$ is a Fermat CY  polynomial of three variables, and $G=\<J_W\>$. 
\end{enumerate}
\end{theorem}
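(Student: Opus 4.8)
The plan is to deduce both cases from the Virasoro constraints for the Gromov--Witten theory of the Calabi--Yau counterpart $X_{W,G}$ via the Landau--Ginzburg/Calabi--Yau correspondence, and then to transport those constraints back through the comparison using Givental's quantization formalism for the quadratic Hamiltonians behind the operators $L_k$. First, I would set up the comparison: for an invertible $W$ of Calabi--Yau type and an admissible $G$ with $G\leq {\rm SL}(\C)$ (respectively $G=\<J_W\>$ in the three--variable Fermat case), there is a Calabi--Yau orbifold $X_{W,G}=[\{W=0\}/(G/\<J_W\>)]$ inside the appropriate quotient of $\mathbb{P}(w_1,\dots,w_n)$, together with a state--space isomorphism $\cH_{W,G}\cong H^*_{\rm CR}(X_{W,G})$ that matches the pairing \eqref{qst-pairing} with the orbifold Poincar\'e pairing and the bigrading \eqref{bigrading} with the Hodge bidegree of $X_{W,G}$ --- the shift by $\widehat c_W/2$ and the degree shift $\iota_\gamma$ playing the roles of $\dim X/2$ and the age. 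By the Landau--Ginzburg/Calabi--Yau correspondence of Chiodo--Iritani--Ruan \cite{CIR} and the subsequent work of Chiodo--Ruan, Clader, and Lee--Priddis--Shoemaker, the genus--zero Frobenius manifolds on the two sides are related by analytic continuation in the complexified K\"ahler parameter, and the two total ancestor potentials are related by $\mathcal{A}^{\rm FJRW}_{W,G}=\widehat{\mathbb{U}}\,\mathcal{A}^{\rm GW}_{X_{W,G}}$, where $\mathbb{U}$ is the induced symplectic transformation of Givental's loop spaces and $\widehat{\mathbb{U}}$ its quantization. In the three Fermat cases of $(2)$ the state space already matches $H^*$ of a smooth weighted plane elliptic curve with precisely the bidegrees recorded in Example \ref{bigrforell}.

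Next I would verify that $\widehat{\mathbb{U}}$ preserves the Virasoro constraints. Following Givental \cite{G-ham}, both the operators $L_k$ of Definition \ref{def-virasoro} and the Gromov--Witten Virasoro operators $L_k^{\rm GW}$ of $X_{W,G}$ are quantizations of quadratic Hamiltonians $\ell_k$ on the respective loop spaces, built functorially from the grading operator ($\mu^+_a$ on the FJRW side; $\tfrac12\deg-\tfrac12\dim X$ on the Gromov--Witten side, with $c_1(X_{W,G})=0$) together with the calibration (the fundamental solution). Since the identification above matches the gradings and matches the calibrations up to the analytic continuation, the symplectic transformation $\mathbb{U}$ conjugates $\ell_k^{\rm GW}$ to $\ell_k^{\rm FJRW}$; hence $\widehat{\mathbb{U}}$ intertwines $L_k^{\rm GW}$ with $L_k$ up to the standard cocycle term, whose constant parts agree because both are computed by the same supertrace. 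Therefore $L_k\mathcal{A}^{\rm FJRW}_{W,G}=0$ for all $k\geq -1$ will follow from $L_k^{\rm GW}\mathcal{A}^{\rm GW}_{X_{W,G}}=0$.

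It then remains to supply Virasoro on the Gromov--Witten side. For $(2)$, $X_{W,\<J_W\>}$ is a smooth elliptic curve, and the Virasoro constraints for the Gromov--Witten theory of elliptic curves are the theorem of Okounkov--Pandharipande \cite{OP}; together with the previous two steps this proves $(2)$. For $(1)$, $X_{W,G}$ is a Gorenstein Calabi--Yau orbifold of dimension $\widehat c_W\geq 3$ with vanishing first Chern class. Here I would first reduce to the ``ambient'' sub--theory: by deformation invariance of Gromov--Witten invariants and Zariski density of the monodromy action on the primitive cohomology (which requires $\dim X\geq 3$), every Gromov--Witten invariant with primitive insertions equals a polynomial in the classical intersection pairings of those insertions times an invariant with only ambient insertions, so $\mathcal{A}^{\rm GW}_{X_{W,G}}$ factors as a Gaussian in the primitive variables times the ancestor potential of the ambient theory. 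One then obtains Virasoro for the ambient theory by comparing with the toric ambient, whose quantum cohomology is generically semisimple: the twisted Gromov--Witten theory cut out by the defining section is again generically semisimple and hence Virasoro--annihilated by the Givental--Teleman classification \cite{G-higher, T}, and one transports this through the Calabi--Yau (quantum Lefschetz) limit.

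The main obstacle is twofold. First, in the second step one must show that the analytic--continuation path together with the non--trivial symplectic transformation $\mathbb{U}$ genuinely conjugate the two families of Virasoro operators --- equivalently, that the Hamiltonians $\ell_k$ are flat along the path and that the anomaly and constant terms cancel --- which is the technical heart of the argument. Second, the last assertion in the treatment of $(1)$ requires carrying the Virasoro constraints from the generically semisimple twisted toric model through the non--semisimple Calabi--Yau limit, where the Givental--Teleman reconstruction does not apply directly and one must instead argue with the limiting behaviour of the quadratic Hamiltonians (or invoke the known higher--genus structure of these ambient Gromov--Witten theories).
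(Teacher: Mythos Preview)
Your strategy for Part~(2) is broadly the same as the paper's: pull the Virasoro constraints back from the Gromov--Witten theory of the elliptic curve via an LG/CY comparison. However, the general Chiodo--Iritani--Ruan style statement you invoke --- an identification $\mathcal{A}^{\rm FJRW}_{W,G}=\widehat{\mathbb{U}}\,\mathcal{A}^{\rm GW}_{X_{W,G}}$ for some symplectic $\mathbb{U}$ --- is only established at genus zero in that generality; it does not give the all-genus relation you need. The paper instead uses the very specific higher-genus LG/CY correspondence of \cite{LSZ} for the three Fermat pairs $(W_d,\langle J\rangle)$, which realizes the comparison as a \emph{holomorphic Cayley transformation} $\mathscr{C}^{\rm hol}_{\tau_*}$ of quasi-modular forms. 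One first passes from Okounkov--Pandharipande's descendent Virasoro \cite{OP} to the ancestor potential via Theorem~\ref{thm-ancestor-descendant}, checking explicitly that for the elliptic curve $\widehat{S}(q)$ and $e^{\mathcal F_1}$ commute with $L_k^{\sE}$ (the calibration \eqref{S-cubic} is strictly lower-triangular in a way that kills all cocycle contributions), and then observes that the Cayley transformation commutes with the Virasoro operators because the latter are independent of the modular parameter. So your outline for (2) is right in spirit, but the input you cite does not supply the all-genus statement; the paper's specific mechanism via \cite{LSZ} does.

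For Part~(1) your approach has a genuine gap and is, moreover, unnecessary. The all-genus LG/CY correspondence for Calabi--Yau hypersurfaces (or their finite quotients) of dimension $\geq 3$ is not available in the literature you cite --- for the quintic threefold this was a famous open problem --- so you cannot transport Virasoro across it. The subsequent reduction to an ambient semisimple twisted theory and passage through the non-semisimple limit are exactly the obstacles you yourself flag, and they are real. The paper avoids all of this: it observes (Lemma~\ref{thm-lc}, Corollary~\ref{virasoro-high CY}) that for invertible CY $W$ with $\widehat c_W\geq 3$ and $G\leq {\rm SL}_n(\mathbb C)$, every nonzero homogeneous class $\phi_a=\alpha|\gamma\rangle$ with $\gamma\neq J$ has $\deg_{\mathbb C}\phi_a\geq 1$; this is an elementary combinatorial argument using ${\rm age}(\gamma)\in\mathbb Z_{\geq 0}$ and the weight system of $W^T$. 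Under that degree condition the Virasoro operators $L_k$ ($k\geq 1$) collapse, exactly as in Getzler's proof for Calabi--Yau targets \cite[Theorem~7.1]{Ge}, so $L_k\mathcal A^{\rm FJRW}_{W,G}=0$ follows from string, dilaton and the grading equation. No LG/CY input is required.
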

The Frobenius manifolds in these LG models are no longer generically semi-simple.
Part (1) of Theorem \ref{thm-cy} follows from a simple degree calculation, which is an analog of \cite[Theorem 7.1]{Ge}.
For part (2), we first recall the LG/CY correspondence proved in \cite{LLSZ, LSZ}.
The authors there relates the LG theory of the pair and the GW theory of the elliptic curve by a holomorphic Cayley transformation,
which is induced from the theory of quasi-modular forms.
The Virasoro operators in LG/CY theories commutes with the holomorphic Cayley transformation.
Thus Part (2) follows from the Virasoro constraints for the Gromov-Witten theory of the elliptic curve \cite{OP}. \\

\paragraph*{\bf Plan of the paper}
In Section \ref{sec-cohft}, we briefly review some properties of the CohFTs for the admissible LG pairs and discuss their influence on Virasoro constraints of $L_{-1}$ and $L_0$.
In Section \ref{sec-givental}, we follow Givental's work to express the Virasoro operators as quantization operators of certain quadratic Hamiltonians and prove the Virasoro relations \eqref{Virasoro-relation} in Proposition \ref{prop-virasoro}.
In Section \ref{sec-max}, we provide some examples of semisimple Frobenius manifolds.
Theorem \ref{thm-max} will follow from the mirror symmetry statements and Givental's proof of Virasoro constraints for semi-simple Frobenius manifolds.
We prove Proposition \ref{general-pillow} by calculating the quantum multiplication of the quantum Euler vector.
In Section \ref{sec-calabi-yau}, we discuss the connection between Virasro constraints and LG/CY correspondence, and prove Theorem \ref{thm-cy}.\\

\paragraph*{\bf Acknowledgement}
We would like to thank Huijun Fan, Todor Milanov, Alexander Polishchuk, Yongbin Ruan, and Arkady Vaintrob for helpful discussions.
W. He would like to thank Jianxun Hu, Huazhong Ke, Xiaowen Hu, Yifan Li, Xiaobo Liu and Xin Wang.
Y. Shen would like to thank Amanda Francis, Takashi Kimura, Y-P Lee, Hsian-Hua Tseng, and Jie Zhou.

We thank the hospitality of Institute for Advanced Study in Mathematics, Zhejiang University. Part of the work was done there during the authors' visit in September 2019.
Y. Shen is partially supported by Simons Collaboration Grant 587119.


\section{
Virasoro constraints of $L_{-1}$ and $L_0$}
\label{sec-cohft}
In this section, we discuss some properties of the cohomology field theories and the connections to Virasoro operators $L_{-1}$ and $L_0.$

\subsection{String equation and the operator $L_{-1}$}\label{string}
According to \cite[Theorem 4.2.2]{FJR}, \cite[Theorem 5.1.2]{PV}, and \cite[Theorem 4.6]{KL}, each of the three CohFTs $\{\Lambda^{\clubsuit, W, G}_{g,k}\}$ has an element
$$\one:=\phi_0:=1|J\>,$$
called a {\em flat identity}, which satisfies the following two conditions:
\begin{itemize}
\item
The CohFT is compatible with the pairing
$$\int_{\overline{\cM}_{0,3}}\Lambda_{0,3}^{\clubsuit, W, G}(\phi_a,\phi_b,\one)=(\phi_a, \phi_b).$$
\item
If $2g-2+k>0$, let $p: \overline{\cM}_{g,k+1}\to \overline{\cM}_{g,k}$ be the morphism forgetting the last marking and contracting all the unstable components, then
$$\Lambda_{g,k+1}^{\clubsuit, W, G}(\phi_1, \cdots, \phi_k, \one)=p^*\Lambda_{g,k}^{\clubsuit, W, G}(\phi_1, \cdots, \phi_k).$$
\end{itemize}
Using the geometry of psi-classes, these two conditions imply:
\begin{align}
\LD\tau_{0}(\phi_a)\tau_{0}(\phi_b)\tau_{0}(\one)\RD_{0,3}^{\clubsuit, W, G}&=\eta_{ab}\label{string-special};\\
\LD\prod_{i=1}^{k}\tau_{\ell_i}(\phi_{i})\tau_{0}(\one)\RD_{g,k+1}^{\clubsuit, W, G}
&=\sum\limits_{j=1}^{k}\LD\prod_{i=1}^{k}\tau_{\ell_i-\delta_{i}^{j}}(\phi_{i})\RD_{g,k}^{\clubsuit, W, G}.
\label{string}
\end{align}
These equations are called {\em string equations}.
The Virasoro operator $L_{-1}$ in \eqref{virasoro-operator} is given by
$$L_{-1}=-{\partial\over \partial t_{0}^{0}}+\sum_{a, m} t^a_{m+1}{\partial\over \partial t_{m}^{a}}+{1\over 2\hbar^2}\sum_{a,b} \eta_{ba}t^{a}_{0}t^{b}_0.$$
The peculiar ordering of the variables here reflects the potential presence of odd classes.
Immediately, we see the string equations \eqref{string} and \eqref{string-special} is equivalent to the Virasoro constraint
$$L_{-1}\mathcal{A}^{\clubsuit}_{W,G}=0.$$

\subsection{A grading equation and the operator $L_0$}
\label{A grading equation and the operator $L_0$}

Now we consider the Virasoro constraint of $L_0$.
We need some preparations.
Recall the parity $|\phi_a|$ defined in Definition \ref{parity}, we define the {\em Euler characteristic} of the admissible LG pair $(W, G)$ to be
\begin{equation}
\label{euler-char}
\chi_{W,G}:=\sum_{\phi_a\in \cH_{W,G}}|\phi_a|.
\end{equation}
\begin{remark}
In FJRW theory, the parity is induced by the cohomological degree of Lefschetz thimbles; in PV theory, the parity is induced by the Hochschild degree of Hochschild homology class.
\end{remark}

Next we define a {\em Hodge grading operator} $\theta$ by
\begin{equation}
\label{hodge-op}
\theta(\phi_az^m):=\mu^+_a\cdot \phi_az^m.
\end{equation}
Now we rewrite the last term in the Virasoro operator $L_k$ in \eqref{virasoro-operator} using a {\em supertrace}
\begin{equation}
\label{super-trace}
{\rm Str}(\theta^2-{1\over 4}):=\sum_{a}(-1)^{|\phi_a|}(\mu_a^+-{1\over 2})(\mu_a^++{1\over 2}).
\end{equation}
Similar to \cite[Proposition 2.6]{Ge}, the super trace term ${\rm Str}(\theta^2-{1\over 4})$ is related to the {\em Euler characteristic} $\chi_{W,G}$.
\begin{proposition}
[A supertrace formula]
\label{conj-supertrace}
Let $(W, G)$ be an admissible LG pair, then
\begin{equation}
\label{hodge-rr}
{\rm Str}(\theta^2-{1\over 4})
={\widehat{c}_W-3\over 12}\chi_{W,G}.
\end{equation}
\end{proposition}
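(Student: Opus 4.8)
The plan is to prove the supertrace formula \eqref{hodge-rr} by reducing it to a purely combinatorial identity about the bigrading data of the state space $\cH_{W,G}$, and then establishing that identity by decomposing the state space according to the $G$-twisted sectors $\cH_\gamma$.

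First I would rewrite the left-hand side in terms of the complex degrees and central charge. Using \eqref{complex-degree}, for a homogeneous $\phi_a\in\cH_\gamma$ we have $\mu_a^+ = \deg_\C\phi_a - \tfrac12(\mu_a^- - \mu_a^+) - \tfrac{\widehat c_W}{2} + (\mu_a^+ - \tfrac{\mu_a^- + \mu_a^+}{2})$; more usefully, set $p_a := \deg_\C\phi_a$ and note from \eqref{bigrading} that $\mu_a^+ + \mu_a^- = 2(\iota_\gamma + {\rm wt}(\alpha)) - \ldots$ — the cleaner bookkeeping is $\mu_a^+ = {\rm wt}(\alpha) + \iota_\gamma - \widehat c_W/2$ directly. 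The key point is that there is a Poincar\'e-duality pairing on $\cH_{W,G}$, coming from \eqref{qst-pairing}, which sends a homogeneous element of bigrading $(\mu^+, \mu^-)$ in the $\gamma$-sector to one of bigrading $(-\mu^+, -\mu^-)$ in the $\gamma^{-1}$-sector, preserving parity since $N_\gamma = N_{\gamma^{-1}}$. Hence the involution $\phi_a \mapsto \phi_a^\vee$ pairs up basis elements, and ${\rm Str}(\theta^2 - \tfrac14) = \sum_a (-1)^{|\phi_a|}(\mu_a^{+2} - \tfrac14)$ can be organized over these dual pairs. This by itself does not kill the sum, so the substance lies elsewhere.

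The main step is a sector-by-sector computation. Fix $\gamma\in G$ with fixed locus of dimension $N_\gamma$, and consider the contribution of $\cH_\gamma = ({\rm Jac}(W_\gamma)\cdot d\bx_\gamma)^G$. Writing $\widehat c_{W_\gamma} = \sum_{i\in W_\gamma}(1 - 2q_i)$ for the central charge of the restricted polynomial, the Jacobian ring ${\rm Jac}(W_\gamma)$ carries its own grading by ${\rm wt}$, and the pair $(\mu^+, \mu^-)$ of $\phi_a = \alpha|\gamma\rangle$ is an affine shift (by $\iota_\gamma - \widehat c_W/2$) of the ${\rm Jac}(W_\gamma)$-grading. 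So $\sum_{\phi_a\in\cH_\gamma}(\mu_a^{+2} - \tfrac14)$ is, up to the shift, controlled by the first two moments $\sum_\alpha 1$, $\sum_\alpha {\rm wt}(\alpha)$, $\sum_\alpha {\rm wt}(\alpha)^2$ of the $G$-invariant part of ${\rm Jac}(W_\gamma)\cdot d\bx_\gamma$. The central observation (this is the analog of the Hodge-theoretic fact used in \cite[Proposition 2.6]{Ge}) is that for a quasihomogeneous polynomial $W_\gamma$ with isolated singularity, the Milnor ring ${\rm Jac}(W_\gamma)$ satisfies Poincar\'e duality with socle in top degree $\sum_{i\in W_\gamma}(1 - 2q_i) = \widehat c_{W_\gamma}$ (in the ${\rm wt}(\cdot) - \sum q_i$ normalization), and the generating function $\sum \dim {\rm Jac}(W_\gamma)_d\, t^d = \prod_{i\in W_\gamma}\frac{t^{1-q_i} - t^{q_i}}{\ldots}$ — more precisely the Poincar\'e polynomial is $\prod_{i\in W_\gamma}\frac{1 - t^{1 - q_i}}{1 - t^{q_i}}$. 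From this explicit product one extracts $\dim{\rm Jac}(W_\gamma) = \prod_{i\in W_\gamma}\frac{1-q_i}{q_i}$ and the sum of degrees via the symmetry $d \leftrightarrow \widehat c_{W_\gamma} - d$. Taking the $G$-invariant part and then summing the signed contributions $(-1)^{N_\gamma}$ over all $\gamma$, one should recognize the answer as $\tfrac{\widehat c_W - 3}{12}$ times $\sum_\gamma (-1)^{N_\gamma}\dim\cH_\gamma = \chi_{W,G}$.

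The hard part will be handling the $G$-invariance uniformly: the individual sectors $\cH_\gamma$ are $G$-invariant subspaces of ${\rm Jac}(W_\gamma)\cdot d\bx_\gamma$, and the moments $\sum_{\phi_a\in\cH_\gamma}{\rm wt}(\alpha)^j$ require understanding how the grading interacts with the $G$-action — one needs that the $G$-action on ${\rm Jac}(W_\gamma)$ is by grading-preserving automorphisms (true, since $G\leq G_{W_\gamma}$ acts diagonally) so that the invariant part is still graded, and then a Molien/averaging argument computes the graded dimensions of the invariants. I expect the cleanest route is to first prove the identity for $G = G_W$ the maximal group (where things are most symmetric and one can pair $\gamma$ with $\gamma^{-1}$ and each broad sector against its Serre dual), and then argue that both sides of \eqref{hodge-rr} are "additive over sectors" in a way compatible with passing to a subgroup $G\leq G_W$ — or, alternatively, run the Molien-series computation directly for general admissible $G$. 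A secondary subtlety is the exact matching of normalizations between ${\rm wt}(\alpha)$, $\iota_\gamma$, and the shift $-\widehat c_W/2$ in \eqref{bigrading}, so that the telescoping of the degree-symmetry in each Milnor ring produces precisely the coefficient $\tfrac{1}{12}(\widehat c_W - 3)$ rather than something off by a sector-dependent constant; verifying this against Example \ref{bigrforell} (the Fermat cubic, where $\widehat c_W = 1$, $\chi_{W,G} = 1 + 1 - 1 - 1 = 0$, and indeed ${\rm Str}(\theta^2 - \tfrac14) = 2(\tfrac14 - \tfrac14) - 2(\tfrac14 - \tfrac14) = 0 = \tfrac{1-3}{12}\cdot 0$) is a useful consistency check that I would carry out early.
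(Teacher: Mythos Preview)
Your approach shares the essential ingredients with the paper's proof: the Poincar\'e polynomial of each ${\rm Jac}(W_\gamma)$, Molien-type averaging over $G$ to extract invariants, and a sector-by-sector sum over $\gamma\in G$. The paper organizes these more cleanly by packaging everything into the single generating series
\[
P_{(W,G)}(y):=\sum_{\phi_a\in\cH_{W,G}}(-1)^{|\phi_a|}\,y^{\mu_a^+},
\]
so that $\chi_{W,G}=\lim_{y\to1}P_{(W,G)}(y)$ and ${\rm Str}(\theta^2)=\lim_{y\to1}\frac{d}{dy}\big(y\frac{d}{dy}P_{(W,G)}(y)\big)$; your ``first two moments'' are exactly these limits, so the two formulations are equivalent.

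The substantive gap in your outline is the sentence ``one should recognize the answer as $\tfrac{\widehat c_W-3}{12}$ times $\chi_{W,G}$'': that recognition \emph{is} the proposition, and you give no mechanism for it. The paper does not carry this step out from scratch either---it observes that $P_{(W,G)}(y)$ coincides (up to sign) with the series $\chi(W,G)(y)$ of Ebeling--Takahashi and then invokes \cite[Theorem~19]{ET} for the identity between the second derivative and $\tfrac{\widehat c_W}{12}$ times the value at $y=1$; the paper's only new content is noting that replacing their ``variance'' by ${\rm Str}(\theta^2)$ removes the hypothesis $G\leq {\rm SL}_n(\C)$. If you want a self-contained argument you must reproduce the combinatorics of \cite{ET}, which manipulates the double sum $\sum_{\gamma,g\in G}\prod_i\frac{y-\lambda_i(g)y^{q_i}}{1-\lambda_i(g)y^{q_i}}$ and is not short. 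Finally, the alternative you float---prove it first for $G=G_W$ and then descend---does not work in any obvious way: neither $\cH_{W,G}$ nor $\chi_{W,G}$ sits inside the corresponding object for $G_W$ in a fashion that transports the identity, so you should commit to the direct Molien computation for general admissible $G$.
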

This formula will be discussed in Section \ref{sec-super}.
As a consequence, the Virasoro operator $L_0$ has the following expression:
\begin{align}
L_0:=-\frac{3-\widehat{c}_W}{2} {\pop t_{1}^{0}}
+\sum_{m=0}^\infty\left(\mu^+_a+m+\frac{1}{2}\right) t^a_m{\pop t_{m}^{a}}-{\widehat{c}_W-3\over 48}\chi_{W,G}\label{Virasoro-L0}.
\end{align}

\subsubsection{Dilaton equations}
The {\em dilaton equations} are:
\begin{align}
\LD\tau_{1}(\one)\RD_{1,1}^{\clubsuit, W, G}&={\chi_{W,G}\over 24}.\label{dilaton-g=1}\\
\LD\prod_{i=1}^{k}\tau_{\ell_i}(\phi_{i})\tau_{1}(\one)\RD_{g,k+1}^{\clubsuit, W, G}&=(2g-2+k)\LD \prod_{i=1}^{k}\tau_{\ell_i}(\phi_{i})\RD_{g,k}^{\clubsuit, W, G}.\label{dilaton}
\end{align}
These equations are obtained by using the geometry of psi-classes and the virtual fundamental cycles.
For example, the first equation can be deduced from the tautological relation
$$\psi_1={1\over 24}\cdot\delta_{\rm irr}\in H^2(\overline{\cM}_{1,1}),$$
where $\delta_{\rm irr}$ is the boundary divisor of $\overline{\cM}_{1,1}$, parametrizing all stable genus one curves with one node and one marking.

Similar to the string equations, the dilaton equations \eqref{dilaton-g=1} and \eqref{dilaton} can be rewritten as differential equations
\begin{equation}
\label{dilaton-operator}
\left(-{\partial\over \partial t_{1}^{0}}+\sum_{a, m} t^a_{m}{\partial\over \partial t_{m}^{a}}+\hbar{\partial\over \partial \hbar}+{\chi_{W,G}\over 24}\right)\mathcal{A}^{\clubsuit, W, G}=0.
\end{equation}
Again, the super trace formula \eqref{hodge-rr} allows us to replace the scalar multiplication by the Euler characteristic $\chi_{W,G}$  in \eqref{dilaton-operator} by a term related to the super trace ${\rm Str}(\theta^2-{1\over 4})$.


\subsubsection{A grading equation}
We define a grading operator
\begin{equation}\label{bigE}
	 \widetilde{E}:=\sum_{m, a}\left(m-1+\mu^+_a+{\widehat{c}_W\over{2}}\right)t_m^{a}{\partial \over\partial t_m^{a}}.
\end{equation}
\begin{proposition}
The Virasoro constraint $L_0\mathcal{A}^{\clubsuit}_{W,G}=0$ is equivalent to the {\em grading equation}
\begin{equation}
\label{grading-equation}
\widetilde{E}\mathcal{A}^{\clubsuit}_{W,G}=\left({3-\widehat{c}_W\over 2}\right)\hbar{\partial\over \partial \hbar}\mathcal{A}^{\clubsuit}_{W,G}.
\end{equation}
\end{proposition}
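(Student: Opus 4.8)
The plan is to show that $L_0\mathcal{A}^{\clubsuit}_{W,G}=0$ and $\widetilde{E}\mathcal{A}^{\clubsuit}_{W,G}=\left(\tfrac{3-\widehat{c}_W}{2}\right)\hbar\tfrac{\partial}{\partial\hbar}\mathcal{A}^{\clubsuit}_{W,G}$ differ only by a scalar multiple of the dilaton operator \eqref{dilaton-operator}, which already annihilates $\mathcal{A}^{\clubsuit}_{W,G}$. Concretely, I would start from the simplified form \eqref{Virasoro-L0} of $L_0$, namely
\[
L_0=-\frac{3-\widehat{c}_W}{2}{\pop t_1^0}+\sum_{m,a}\left(\mu_a^++m+\tfrac12\right)t_m^a{\pop t_m^a}-\frac{\widehat{c}_W-3}{48}\chi_{W,G},
\]
which is legitimate to use thanks to the supertrace formula \eqref{hodge-rr} in Proposition \ref{conj-supertrace}. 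The goal is to rewrite the sum $\sum_{m,a}\left(\mu_a^++m+\tfrac12\right)t_m^a\partial_{t_m^a}$ so that the coefficient becomes $m-1+\mu_a^++\tfrac{\widehat{c}_W}{2}$, the weight appearing in $\widetilde{E}$.

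The key algebraic step is the identity
\[
\mu_a^++m+\tfrac12=\left(m-1+\mu_a^++\tfrac{\widehat{c}_W}{2}\right)+\left(\tfrac32-\tfrac{\widehat{c}_W}{2}\right),
\]
so that
\[
\sum_{m,a}\left(\mu_a^++m+\tfrac12\right)t_m^a{\pop t_m^a}=\widetilde{E}+\frac{3-\widehat{c}_W}{2}\sum_{m,a}t_m^a{\pop t_m^a}.
\]
Substituting this into \eqref{Virasoro-L0} gives
\[
L_0=\widetilde{E}+\frac{3-\widehat{c}_W}{2}\left(-{\pop t_1^0}+\sum_{m,a}t_m^a{\pop t_m^a}\right)-\frac{\widehat{c}_W-3}{48}\chi_{W,G}.
\]
Now I invoke the dilaton equation in operator form \eqref{dilaton-operator}: applied to $\mathcal{A}^{\clubsuit}_{W,G}$ it says
\[
\left(-{\pop t_1^0}+\sum_{m,a}t_m^a{\pop t_m^a}\right)\mathcal{A}^{\clubsuit}_{W,G}=-\left(\hbar{\partial\over\partial\hbar}+{\chi_{W,G}\over 24}\right)\mathcal{A}^{\clubsuit}_{W,G}.
\]
Plugging this into the expression for $L_0\mathcal{A}^{\clubsuit}_{W,G}$, the two $\chi_{W,G}$-terms cancel: indeed $\frac{3-\widehat{c}_W}{2}\cdot\left(-\frac{\chi_{W,G}}{24}\right)-\frac{\widehat{c}_W-3}{48}\chi_{W,G}=\frac{\widehat{c}_W-3}{48}\chi_{W,G}-\frac{\widehat{c}_W-3}{48}\chi_{W,G}=0$, leaving exactly
\[
L_0\mathcal{A}^{\clubsuit}_{W,G}=\widetilde{E}\mathcal{A}^{\clubsuit}_{W,G}-\frac{3-\widehat{c}_W}{2}\hbar{\partial\over\partial\hbar}\mathcal{A}^{\clubsuit}_{W,G}.
\]
Hence $L_0\mathcal{A}^{\clubsuit}_{W,G}=0$ if and only if $\widetilde{E}\mathcal{A}^{\clubsuit}_{W,G}=\left(\tfrac{3-\widehat{c}_W}{2}\right)\hbar\tfrac{\partial}{\partial\hbar}\mathcal{A}^{\clubsuit}_{W,G}$, which is the claimed equivalence.

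The only real subtlety — the "hard part" — is bookkeeping with signs and parities: the sums over $a$ in $L_0$, $\widetilde{E}$, and the dilaton operator must all be interpreted with the same ordering conventions for the odd variables $t_m^a$ (as fixed in the discussion following \eqref{ancestor-qst}), and one must be careful that $\mu_0^+=-\tfrac{\widehat{c}_W}{2}$ for the identity insertion $\phi_0=\one$, which is exactly what makes the coefficient of $t_1^0\partial_{t_1^0}$ in $\widetilde{E}$ equal to $1-1+\mu_0^++\tfrac{\widehat{c}_W}{2}=0$, consistent with the explicit $-\tfrac{3-\widehat{c}_W}{2}\partial_{t_1^0}$ term being the genuine "dilaton-shift" contribution rather than part of the diagonal sum. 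Once these conventions are aligned, the proof is the short computation above; everything else has been reduced to the already-established supertrace formula \eqref{hodge-rr} and dilaton equation \eqref{dilaton-operator}.
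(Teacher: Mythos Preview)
Your proof is correct and follows essentially the same idea as the paper's: both rely on the supertrace formula \eqref{hodge-rr} to put $L_0$ in the form \eqref{Virasoro-L0}, and then use the dilaton equation \eqref{dilaton-operator} to relate it to $\widetilde{E}$. The only cosmetic difference is that the paper splits into the cases $\widehat{c}_W=3$ (where $L_0=\widetilde{E}$ on the nose) and $\widehat{c}_W\neq 3$, whereas you derive the single operator identity $L_0\mathcal{A}=\widetilde{E}\mathcal{A}-\tfrac{3-\widehat{c}_W}{2}\hbar\partial_\hbar\mathcal{A}$ uniformly; this is a mild streamlining but not a different argument.
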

\begin{proof}
There are two situations.
If $\widehat{c}_W=3$, 
we compare \eqref{Virasoro-L0} and \eqref{bigE}.
The grading equation $\widetilde{E}(\mathcal{A}^{\clubsuit}_{W,G})=0$ is exactly the Virasoro constraint
$L_0\mathcal{A}^{\clubsuit}_{W,G}=0.$

If $\widehat{c}_W\neq 3$, then we can cancel the differential operator $\hbar{\partial\over \partial \hbar}$ by taking a linear combination of the dilaton equation \eqref{dilaton-operator} and the grading equation \eqref{grading-equation}, and get the Virasoro constraint
\begin{equation}\label{L0}
 \left({\widehat{c}_W-3\over 2}{\partial\over \partial t_{1}^{0}}+\sum_{m, a}(m+\mu^+_a+{1\over 2})t_m^{a}{\partial \over\partial t_m^{a}}+{(3-\widehat{c}_W)\chi\over 48}\right)\mathcal{A}^{\clubsuit}_{W,G}=0.
\end{equation}
\end{proof}

\subsubsection{A sufficient condition}
We provide a sufficient condition for the grading equation \eqref{grading-equation}.
 \begin{assumption}\label{alge cycle}
   The quantum invariant
   $$\LD\prod_{i=1}^{k}\tau_{\ell_i}(\phi_{i})\RD^{\clubsuit, W, G}_{g, k}\neq 0$$
   only if
   $$\sum_{i=1}^k\left(\mu_{i}^{\pm}+{\widehat{c}_W\over 2}+\ell_i\right)=(3-\widehat{c}_W)(g-1)+k.
   $$
 \end{assumption}
One can check directly that for all the examples in Theorem \ref{thm-max}, Theorem \ref{general-pillow}, and Theorem \ref{thm-cy}, Assumption \ref{alge cycle} holds true.
In fact, using the bigrading in \eqref{bigrading}, we obtain  
\begin{proposition}
If Assumption \ref{alge cycle} is satisfied for $\{\Lambda^{\clubsuit, W, G}_{g,k}\}$ of an admissible LG pair $(W, G)$, then
 $L_0\mathcal{A}^{\clubsuit}_{W,G}=0.$
\end{proposition}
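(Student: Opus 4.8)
The plan is to show that Assumption \ref{alge cycle} forces the total ancestor potential $\mathcal{A}^{\clubsuit}_{W,G}$ to be an eigenvector of the grading operator $\widetilde{E}$ with the correct eigenvalue relative to $\hbar\partial/\partial\hbar$, so that the grading equation \eqref{grading-equation} holds; by the proposition proved just above, this is equivalent to $L_0\mathcal{A}^{\clubsuit}_{W,G}=0$. First I would unwind the definitions: by \eqref{ancestor-qst} and \eqref{ancestor-inv}, $\log\mathcal{A}^{\clubsuit}_{W,G}$ is a sum over $g$ and $k$ of monomials $\hbar^{2g-2}\,\frac{1}{k!}\prod_{i=1}^k t_{\ell_i}^{a_i}$ weighted by the ancestor invariant $\LD\prod_i\tau_{\ell_i}(\phi_{a_i})\RD_{g,k}^{\clubsuit,W,G}$. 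Applying $\widetilde{E}$ from \eqref{bigE} to such a monomial multiplies it by $\sum_{i=1}^k\big(\ell_i-1+\mu_{a_i}^++\widehat{c}_W/2\big)$, while applying $\hbar\partial/\partial\hbar$ multiplies $\hbar^{2g-2}$ by $2g-2$.

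The key step is to observe that Assumption \ref{alge cycle} (using the $+$ sign, say) says precisely that whenever the coefficient is nonzero, $\sum_{i=1}^k\big(\mu_{a_i}^++\widehat{c}_W/2+\ell_i\big)=(3-\widehat{c}_W)(g-1)+k$, i.e.\ $\sum_{i=1}^k\big(\ell_i-1+\mu_{a_i}^++\widehat{c}_W/2\big)=(3-\widehat{c}_W)(g-1)$. Hence on every surviving monomial the eigenvalue of $\widetilde{E}$ equals $(3-\widehat{c}_W)(g-1)=\big(\tfrac{3-\widehat{c}_W}{2}\big)(2g-2)$, which is exactly $\big(\tfrac{3-\widehat{c}_W}{2}\big)$ times the eigenvalue of $\hbar\partial/\partial\hbar$ on that monomial. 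Summing over all monomials gives $\widetilde{E}\mathcal{A}^{\clubsuit}_{W,G}=\big(\tfrac{3-\widehat{c}_W}{2}\big)\hbar\tfrac{\partial}{\partial\hbar}\mathcal{A}^{\clubsuit}_{W,G}$, which is \eqref{grading-equation}.

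I should be slightly careful about two bookkeeping points. One is the role of the symmetrization: a single monomial in the $t$-variables receives contributions from several orderings of the insertions, but the grading constraint is symmetric in the insertions, so it holds uniformly and the argument passes to $\log\mathcal{A}^{\clubsuit}_{W,G}$ term by term and then exponentiates without trouble (differential operators that are derivations and eigen-operators on the log pass to the exponential in the stated form, because $\widetilde{E}$ and $\hbar\partial/\partial\hbar$ are both first-order and $\widetilde{E}(\exp F)=\widetilde{E}(F)\exp F$). The other is that the two versions of Assumption \ref{alge cycle} (with $\mu_i^+$ or $\mu_i^-$) must be consistent; since $\mu_{a}^++\mu_{a}^-=2\deg_{\C}\phi_a-\widehat{c}_W$ and the bigrading is symmetric enough on the CohFT, either sign yields the same scalar relation, so nothing depends on the choice.

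The main obstacle is not conceptual but a matter of matching conventions exactly: I need to confirm that the dilaton shift (the replacement of $t_1^0$ by $t_1^0-1$ implicit in the ancestor formalism, reflected in the $-(3-\widehat{c}_W)/2\,\partial/\partial t_1^0$ term of $L_0$ in \eqref{Virasoro-L0}) is already accounted for in the passage from $L_0\mathcal{A}=0$ to the grading equation \eqref{grading-equation} carried out in the preceding proposition, so that here I genuinely only need the unshifted eigenvalue computation. Granting that — which the proof of the preceding proposition supplies — the argument is a direct degree count, and the only real input is Assumption \ref{alge cycle}.
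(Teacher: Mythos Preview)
Your proposal is correct and follows essentially the same approach the paper intends: the paper states this proposition as an immediate consequence of the bigrading \eqref{bigrading} and the preceding proposition equating $L_0\mathcal{A}^{\clubsuit}_{W,G}=0$ with the grading equation \eqref{grading-equation}, and your argument is precisely the explicit unpacking of that implication---showing that Assumption \ref{alge cycle} makes each nonvanishing monomial an eigenvector of $\widetilde{E}$ with eigenvalue $\tfrac{3-\widehat{c}_W}{2}(2g-2)$. Your bookkeeping remarks (on symmetrization, on derivations passing through the exponential, and on the dilaton shift being absorbed in the previous proposition) are all apt and correctly handled.
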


\begin{remark}
The FJRW invariants satisfies a degree constraint \cite[Theorem 4.1.8 (1)]{FJR}
\begin{equation}
\label{deg-fjrw}
\deg_\C[\cW_{g, \gamma}^G]^{\rm{vir, FJRW}}=(3-\widehat{c}_W)(g-1)+n-\sum_{i=1}^{n}\iota_{\gamma_i}.
\end{equation}
The KL theory satisfies the same constraint, which is induced by \cite[(3.20)]{CKL}.
Such a constraint is conjectural in PV theory, called the {\em Homogeneity Conjecture} \cite[Section 5.6]{PV}.
We see Assumption \ref{alge cycle} is stronger than \eqref{deg-fjrw}.
\end{remark}

\begin{remark}\label{KL bigrad}
A similar condition as Assumption \ref{alge cycle} exists in Gromov-Witten theory of smooth algebraic varieties.
It is closely related to the motive axiom of cohomological field theory \cite{KM}. The motive axiom states that Gromov-Witten theory is induce by some algebraic cycle via Fourier-Mukai transform, which is proved by Li-Tian \cite{LiT} and Behrend-Fantechi \cite{B, BF}. More explicitly, for $V$ is  a smooth algebraic variety, the GW CoFT
$$\Lambda^{\rm GW}_{g, n, \beta}: H^*(V)^{\otimes n}\rightarrow H^*(\overline{\mathcal{M}}_{g, n})$$
is induced by some cocycle $C_{g, n, \beta}$ in the Chow ring $A^*(V^n\times \overline{\mathcal{M}}_{g, n})$ via the following Fourier-Mukai transform:
\begin{equation}\label{FM transform}
\Lambda^{\rm GW}_{g, n, \beta}(\phi_1\otimes \cdots \otimes \phi_n)=(pr_{2})_*[pr_1^*(\phi_1\wedge \cdots \wedge \phi_n)\wedge C_{g, n, \beta}],
\end{equation}
where $pr_1: V^n\times \overline{\mathcal{M}}_{g, n}\rightarrow V^n$, $pr_2: V^n\times \overline{\mathcal{M}}_{g, n}\rightarrow \overline{\mathcal{M}}_{g, n}$ are the projections.
Now consider the Hodge decomposition $H^*(V^n\times \overline{\mathcal{M}}_{g, n})=\bigoplus_{p, q}H^{p, q}$. Since $C_{g, n, \beta}$ is an algebraic cycle, we know $C_{g, n, \beta}\in H^{d, d}$ , where $2d$ is the cohomological degree of  $C_{g, n, \beta}$. Furthermore, GW invariant is obtained via the following integration:
$$\<\phi_1, \cdots , \phi_n\>^{\rm GW}_{g, n, \beta}=\int_{\overline{\mathcal{M}}_{g, n}}\Lambda^{\rm GW}_{g, n, \beta}(\phi_1\otimes \cdots \otimes \phi_n).$$
The invariant does not vanish means that
$$\Lambda^{\rm GW}_{g, n, \beta}(\phi_1\otimes \cdots \otimes \phi_n)\in H^{3g-3+n, 3g-3+n}(\overline{\mathcal{M}}_{g, n}).$$ Suppose $\phi_i\in H^{p_i, q_i}(V)$, then \eqref{FM transform} and the.above argument will induce the following constrains:
$$\sum_{i=1}^n p_i=\sum_{i=1}^n q_i.$$
Assumption \ref{alge cycle} in GW theory is a consequence of the above equality and dimension axiom of GW theory.

\end{remark}




\section{Virasoro relations and Givental formalism}
\label{sec-givental}
In this section, we recall Givental's work on quantization of quadratic Hamiltonians \cite{G-ham}.
It implies the Virasoro relation \eqref{Virasoro-relation} in Proposition \ref{prop-virasoro}.
\subsection{Quantization of quadratic Hamiltonians}
This section is mainly based on \cite{G-ham}. See also \cite{CPS} for an exposition.

Fix a $\Z_2$-graded state space $\cH$, set
$$\mathbb{H}:=\cH(\!(z^{-1})\!).$$
A choice of basis $\{\phi_a\}$ for $\cH$ yields a symplectic basis for $\mathbb{H}$, in which the expression for an arbitrary element in Darboux coordinates is
\begin{equation}
\label{darboux-element}
f(z)=\sum_{a}\sum_{\ell\geq 0}p_{\ell, a}\phi^a(-z)^{-\ell-1}+\sum_{b}\sum_{m\geq 0}q_m^b\phi_b (z)^m\in \mathbb{H}.
\end{equation}
Here the parity of $p_{\ell, a}$, $q_m^b$ coincides that of $\phi^a$ and $\phi_b$ respectively.
The pairing $( , )$ on $\cH$ induces a symplectic form on $\mathbb{H}$, denoted by
$$\Omega(f(z), g(z)):={\rm{Res}}_{z=0}\big(f(-z), g(z)\big).$$

\begin{definition}
An operator $A: \mathbb{H}\rightarrow \mathbb{H}$ is called an {\em infinitesimal symplectic transformation} if
$$\Omega(Af, g)+\Omega(f, Ag)=0.$$
An operator $T: \mathbb{H}\rightarrow \mathbb{H}$ is called a {\em symplectic transformation} if
$$\Omega(Tf, Tg)=\Omega(f, g).$$
\end{definition}

Now define the quantization of quadratic terms by 
\begin{equation}\label{ccr}
\begin{dcases}
\widehat{q_iq_j}&:={q_iq_j\over \hbar},\\
\widehat{p_ip_j}&:=\hbar\partial_{q_i}\partial_{q_j},\\
\widehat{q_ip_j}&:=q_i\partial_{q_j}.
\end{dcases}
\end{equation}
The quantization of quadratic Hamitonians forms a projective representation of the Poisson (Lie) algebra.

For an infinitesimal symplectic transformation $A$, define the quadratic Hamiltonian by
\begin{equation}
h_{A}:={1\over2}\Omega(A\Phi, \Phi),
\end{equation}
and the quantization of $A$ is defined to be the quantization of $h_A$ via \eqref{ccr}:
\begin{equation}\label{infinitesimal-symp}
	\widehat{A}:=\widehat{h_A}.
\end{equation}
For a symplectic transformation $T=\exp A$, define its quantization as
\begin{equation}
\widehat{T}:=\exp \widehat{A}.
\end{equation}

\begin{lemma}\label{key-cocycle}
\cite{G-ham}
	Let $A_1, A_2$ be two infinitesimal symplectic transformations, then
	$$[\widehat{A_1}, \widehat{A_2}]=[A_1, A_2]^{\wedge}+C(h_{A_1}, h_{A_2}),$$
	where $C(,)$ is a cocycle defined by
	\begin{align}\label{cocycle}
		C(p_ap_b, q_aq_b)&=(-1)^{|p_b||q_a|}+\delta_{a,b},
	\end{align}
and $C=0$ on any other pair of quadratic Darboux monomials.
		 Here cocycle means $C$ is a closed 2-form in the Hochschild cochain of the Lie algebra of quadratic Hamiltonian.
	\end{lemma}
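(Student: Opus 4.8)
The plan is to follow Givental's argument in \cite{G-ham}, splitting it into a ``classical'' statement and a ``quantum anomaly'' statement. The classical statement is that the assignment $A\mapsto h_A={1\over 2}\Omega(A\Phi,\Phi)$ is a homomorphism of Lie algebras from the infinitesimal symplectic transformations of $\mathbb{H}$ (with commutator bracket) onto the space of quadratic Hamiltonians with the Poisson bracket $\{\,\cdot\,,\,\cdot\,\}$ induced by $\Omega$; that is, $h_{[A_1,A_2]}=\{h_{A_1},h_{A_2}\}$. The quantum statement is the operator identity
\[
[\widehat{h_1},\widehat{h_2}]=\widehat{\{h_1,h_2\}}+C(h_1,h_2),
\]
valid for all quadratic Hamiltonians $h_1,h_2$, with $C$ the bilinear form in the lemma. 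Granting both, the lemma follows in one line:
\[
[\widehat{A_1},\widehat{A_2}]=[\widehat{h_{A_1}},\widehat{h_{A_2}}]=\widehat{\{h_{A_1},h_{A_2}\}}+C(h_{A_1},h_{A_2})=\widehat{h_{[A_1,A_2]}}+C(h_{A_1},h_{A_2})=[A_1,A_2]^{\wedge}+C(h_{A_1},h_{A_2}),
\]
using $\widehat{A}:=\widehat{h_A}$ twice.

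For the classical statement I would work in the Darboux coordinates of \eqref{darboux-element}: represent $A_1,A_2$ as matrices in the symplectic basis, use the infinitesimal-symplectic condition $\Omega(A_if,g)+\Omega(f,A_ig)=0$ to pin down the symmetry of those matrices, and then compute $\{h_{A_1},h_{A_2}\}$ directly from the Poisson bracket of quadratic functions, matching it coefficient by coefficient with ${1\over 2}\Omega([A_1,A_2]\Phi,\Phi)$. This is the standard identification of $\mathfrak{sp}$ with quadratic functions under Poisson bracket, carried out with the Koszul signs dictated by the $\Z_2$-grading on $\cH$.

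For the quantum statement I would reduce to the case that $h_1,h_2$ are quadratic Darboux monomials, since $[\widehat{h_1},\widehat{h_2}]$, $\widehat{\{h_1,h_2\}}$ and $C(h_1,h_2)$ are each bilinear in $(h_1,h_2)$; so it suffices to check the identity on all pairs drawn from $\{q_iq_j,\ q_ip_j,\ p_ip_j\}$ using the rules \eqref{ccr}. A pair of type $(qq,qq)$ quantizes to two multiplication operators, which commute, matching $\{qq,qq\}=0$; a pair of type $(pp,pp)$ quantizes to two constant-coefficient differential operators, which commute, matching $\{pp,pp\}=0$; and for the mixed pairs $(qq,qp)$, $(pp,qp)$, $(qp,qp)$ a short computation shows the commutator agrees \emph{exactly} with the quantization of the corresponding Poisson bracket, so $C$ vanishes there. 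The only source of an anomaly is a pair of type $(p_ap_b,\,q_cq_d)$, where $[\widehat{p_ap_b},\widehat{q_cq_d}]=[\hbar\,\partial_{q_a}\partial_{q_b},\,\hbar^{-1}q_cq_d]$; the Leibniz rule shows this is a first-order operator whose operator part equals $\widehat{\{p_ap_b,q_cq_d\}}$, plus a scalar, and that scalar is the value of $C$. Specializing $(c,d)=(a,b)$ and tracking the signs produced when odd variables are transposed yields $C(p_ap_b,q_aq_b)=(-1)^{|p_b||q_a|}+\delta_{a,b}$, with $C$ vanishing on every other pair. That $C$ is a genuine $2$-cocycle then follows formally: the Jacobi identity for commutators on the left and the (already proven) Jacobi identity for $\{\,\cdot\,,\,\cdot\,\}$ on the right force $\delta C=0$.

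The step I expect to be the main obstacle is not a single computation but the consistent bookkeeping of Koszul signs throughout. The factor $(-1)^{|p_b||q_a|}$ in $C$ is produced precisely when an odd $\partial_q$ is commuted past an odd $q$ inside the monomial computation, and the classical identity $h_{[A_1,A_2]}=\{h_{A_1},h_{A_2}\}$ likewise hides several sign choices tied to the ordering conventions for $p$'s and $q$'s. A minor additional point is to check that the infinite sums implicit in $h_A$ and $\widehat{h_A}$ are meaningful in the completion $\mathbb{H}=\cH(\!(z^{-1})\!)$ and that the quantization is only well defined up to the scalar ambiguity encoded by $C$; both are handled exactly as in \cite{G-ham} (see also \cite{CPS}).
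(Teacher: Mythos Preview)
Your proposal is correct and follows exactly the standard argument from \cite{G-ham} (see also the exposition in \cite{CPS}); note that the paper itself does not prove this lemma but simply cites it as a known result from Givental's work, so there is no ``paper's own proof'' to compare against beyond the reference. Your two-step decomposition into the classical Lie-algebra homomorphism $A\mapsto h_A$ and the monomial-by-monomial computation of the quantum anomaly is precisely Givental's approach, and your identification of the Koszul sign bookkeeping as the only delicate point is accurate.
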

		 For simplicity, we denote
		 $$C(A_1, A_2):=C(h_{A_1}, h_{A_2}).$$
\begin{corollary}
\label{cocycle-commute}
Let $A$ be an infinitesimal symplectic transformation and $T$ be a symplectic transformation, we have
       \begin{equation}
       \label{S-commute}
       \widehat{T}\circ\widehat{A}\circ\widehat{T}^{-1}=\widehat{TAT^{-1}}+C_T(A).
       \end{equation}
       Here $C_T(A)$ is the constant defined to be
       $$C_T(A)=C\left(\log T, \sum_{n=0}\frac{1}{(n+1)!}{\rm Ad}_{\log T}^n(A)\right).$$
	\end{corollary}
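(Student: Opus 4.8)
The plan is to deduce \eqref{S-commute} directly from Lemma \ref{key-cocycle} by a Hadamard (Baker--Campbell--Hausdorff) computation. First I would set $B:=\log T$, an infinitesimal symplectic transformation, so that $T=\exp B$ and, by definition of the quantization, $\widehat T=\exp\widehat B$. The left-hand side of \eqref{S-commute} is then $\exp(\widehat B)\,\widehat A\,\exp(-\widehat B)$, which the Hadamard formula expands as
\[
\widehat T\circ\widehat A\circ\widehat T^{-1}=\sum_{n\geq 0}\frac{1}{n!}\,{\rm ad}_{\widehat B}^{\,n}(\widehat A),\qquad {\rm ad}_{\widehat B}(X):=[\widehat B,X],
\]
interpreted as an identity of formal operators; a filtration argument shows the series is well defined in the completion used throughout Givental's formalism.

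Next I would establish, by induction on $n\geq 1$, the recursion
\[
{\rm ad}_{\widehat B}^{\,n}(\widehat A)=\big({\rm ad}_B^{\,n}A\big)^{\wedge}+C\big(B,\,{\rm ad}_B^{\,n-1}A\big),
\]
where ${\rm ad}_B(\,\cdot\,)=[B,\,\cdot\,]$ is the bracket in the Lie algebra of infinitesimal symplectic transformations and $C(\cdot,\cdot)$ is the scalar cocycle from Lemma \ref{key-cocycle}. The base case $n=1$ is exactly Lemma \ref{key-cocycle}. For the inductive step I would use two facts: that the bracket of two infinitesimal symplectic transformations is again infinitesimal symplectic, so Lemma \ref{key-cocycle} applies to the pair $(B,{\rm ad}_B^{\,n-1}A)$; and that $C(B,{\rm ad}_B^{\,n-1}A)$ is a scalar, hence central, so ${\rm ad}_{\widehat B}$ annihilates it. Applying ${\rm ad}_{\widehat B}$ to the $(n-1)$-st identity and invoking Lemma \ref{key-cocycle} once more then yields the $n$-th identity.

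Finally I would sum the recursion over $n\geq 0$ and use the bilinearity of $C$ together with the linearity of the quantization map $A'\mapsto\widehat{A'}$ on infinitesimal symplectic transformations, obtaining
\[
\widehat T\circ\widehat A\circ\widehat T^{-1}=\sum_{n\geq 0}\frac{1}{n!}\big({\rm ad}_B^{\,n}A\big)^{\wedge}+C\Big(B,\ \sum_{m\geq 0}\frac{1}{(m+1)!}\,{\rm ad}_B^{\,m}A\Big).
\]
The first sum is $\big(e^{{\rm ad}_B}A\big)^{\wedge}=\big({\rm Ad}_{\exp B}A\big)^{\wedge}=\widehat{TAT^{-1}}$, and the second sum is precisely $C_T(A)$ with $B=\log T$, which gives \eqref{S-commute}.

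The computation is essentially routine, and the only point I would flag as a (minor) obstacle is the bookkeeping that makes the recursion close: one must be sure that every correction term produced is a scalar, so that repeated application of ${\rm ad}_{\widehat B}$ never feeds a cocycle term back into the non-central part. One should also keep track of the Koszul signs coming from the $\Z_2$-grading, but since the symplectic transformations occurring in the theory (hence $B=\log T$) are even, these do not intervene; and the infinite series above are manipulated formally, which is harmless in the relevant completion.
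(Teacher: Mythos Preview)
Your proof is correct and follows essentially the same approach as the paper: set $B=\log T$, apply the Hadamard formula $\widehat T\,\widehat A\,\widehat T^{-1}=\exp({\rm ad}_{\widehat B})(\widehat A)$, and use Lemma~\ref{key-cocycle} iteratively to peel off the scalar cocycle terms. The paper compresses the whole computation into two displayed lines, whereas you spell out the induction ${\rm ad}_{\widehat B}^{\,n}(\widehat A)=({\rm ad}_B^{\,n}A)^{\wedge}+C(B,{\rm ad}_B^{\,n-1}A)$ and the reindexing explicitly, but the content is identical.
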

\begin{proof}
Let $T=\exp{B}$, then \eqref{S-commute} is just
	\begin{align*}
		\widehat{T}\circ\widehat{A}\circ\widehat{T}^{-1}
		&=\exp(\mathrm{Ad}_{\widehat{B}})(\widehat{A})\\
		&=\widehat{\exp(\mathrm{Ad}_{B})(A)}+C\left(B, \sum_{n=0}\frac{1}{(n+1)!}{\rm Ad}_{B}^n(A)\right),
	\end{align*}
	where $\mathrm{Ad}_{B}(\widehat{A}):=[B, A]$	.
\end{proof}

In this paper, we mainly consider operators on $\mathbb{H}$ of following two types, {\em lower triangular operators} of the form
\begin{equation}
\label{upper-triangular}
	S(z)=S_0+S_1z^{-1}+S_2z^{-2}+\cdots, \quad S_i\in {\rm End}(\cH),
\end{equation}
and {\em upper triangular operators} of the form
\begin{equation*}
	R(z)=R_0+R_1z+R_2z^2+\cdots, \quad R_i\in {\rm End}(\cH).
\end{equation*}
When we consider pair or Lie bracket between positive sum and negative sum, we always assume one of them is finite, such that the pair or Lie bracket is well-defined.

\subsection{Virasoro relations}
\label{sec-relation}
Now we return to an admissible LG pair $(W, G)$.
Recall that $(\mu_a^+, \mu_a^-)$ is the bigrading of the element $\phi_a=[f_a]d{\bf x}_{\gamma_a}\in \cH_{W, G}$ defined in \eqref{bigrading}.
Using \eqref{bigrading} and \eqref{qst-pairing}, it is easy to check that
\begin{lemma}\label{pairing}
We fix  a basis of $\cH_{W,G}$ as follows $\left\{\phi_a=f_a|\gamma_a\>\mid [f_a]\in {\rm Jac}(W_{\gamma_a})\right\}.$
If the pairing
$\eta_{ab}:=(\phi_a, \phi_b)\neq 0,$ then
$\mu^+_a+\mu^+_{b}=0.$
\end{lemma}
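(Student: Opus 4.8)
The plan is to unwind the two definitions in play — the bigrading \eqref{bigrading} and the residue pairing \eqref{qst-pairing} — and observe that the pairing is nonzero only between sectors $\gamma_a$ and $\gamma_b=\gamma_a^{-1}$, which forces a precise relation between the weights of $f_a$ and $f_b$ and between the degree shift numbers $\iota_{\gamma_a}$ and $\iota_{\gamma_b}$. First I would note that if $\eta_{ab}\neq 0$ then by \eqref{qst-pairing} we must have $\gamma_b=\gamma_a^{-1}$, so $W_{\gamma_b}=W_{\gamma_a}$, the fixed loci coincide, $N_{\gamma_a}=N_{\gamma_b}=:N$, and $d\mathbf{x}_{\gamma_b}=d\mathbf{x}_{\gamma_a}$. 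Moreover for the Grothendieck residue ${\rm Res}_{W_{\gamma_a}}(f_af_b\cdot d\mathbf{x}_{\gamma_a})$ to be nonzero, the product $f_af_b$ must have a component of top weight in ${\rm Jac}(W_{\gamma_a})$, i.e.\ $[f_af_b]$ must pair nontrivially with the constant, which happens precisely when ${\rm wt}(f_a\,d\mathbf{x}_{\gamma_a})+{\rm wt}(f_b\,d\mathbf{x}_{\gamma_a})$ equals the weight of the Hessian form $\prod_{i\in W_{\gamma_a}}\bigl((1-2q_i)\text{-type terms}\bigr)$, which by the standard formula for the socle degree of a quasi-homogeneous Jacobian ring equals $\sum_{i\in W_{\gamma_a}}(1-2q_i)+\bigl(\text{correction }\sum_{i\in W_{\gamma_a}}q_i\bigr)$; being careful with the $d\mathbf{x}$ normalization, this says ${\rm wt}(\alpha_a)+{\rm wt}(\alpha_b)=\sum_{i\in W_{\gamma_a}}(1-q_i)= N-\sum_{i\in W_{\gamma_a}}q_i + \sum_{i\in W_{\gamma_a}} 0$, i.e.\ in the notation of the paper ${\rm wt}(\alpha_a)+{\rm wt}(\alpha_b)=\sum_{i\in W_{\gamma_a}}(1-q_i)$.

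Next I would handle the degree shift numbers: since $\gamma_b=\gamma_a^{-1}$, if $\gamma_a=(\exp(2\pi\sqrt{-1}\theta_i))_i$ then $\gamma_b=(\exp(2\pi\sqrt{-1}\theta_i'))_i$ with $\theta_i'=1-\theta_i$ for $i\notin W_{\gamma_a}$ and $\theta_i'=0=\theta_i$ for $i\in W_{\gamma_a}$. Hence $\iota_{\gamma_a}+\iota_{\gamma_b}=\sum_{i}(\theta_i-q_i)+\sum_i(\theta_i'-q_i)=\sum_{i\notin W_{\gamma_a}}(1-2q_i)-2\sum_{i\in W_{\gamma_a}}q_i=\widehat{c}_W-\sum_{i\in W_{\gamma_a}}(1-2q_i)-2\sum_{i\in W_{\gamma_a}}q_i=\widehat{c}_W-\sum_{i\in W_{\gamma_a}}1=\widehat{c}_W-N$. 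Now I simply add: using $\mu_a^+={\rm wt}(\alpha_a)+\iota_{\gamma_a}-\widehat{c}_W/2$ from \eqref{bigrading},
\[
\mu_a^++\mu_b^+=\bigl({\rm wt}(\alpha_a)+{\rm wt}(\alpha_b)\bigr)+\bigl(\iota_{\gamma_a}+\iota_{\gamma_b}\bigr)-\widehat{c}_W
=\sum_{i\in W_{\gamma_a}}(1-q_i)+(\widehat{c}_W-N)-\widehat{c}_W.
\]
Since $\sum_{i\in W_{\gamma_a}}1=N$, the right-hand side is $\bigl(N-\sum_{i\in W_{\gamma_a}}q_i\bigr)-N-\cdots$; here I must reconcile the $q_i$ term, which should cancel against the $d\mathbf{x}$-normalization in the socle computation — tracking that normalization correctly is the one place that needs care — and the conclusion is $\mu_a^++\mu_b^+=0$.

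The main obstacle I anticipate is pinning down the exact socle/top-degree statement for the twisted Jacobian ring ${\rm Jac}(W_{\gamma_a})$ together with the $d\mathbf{x}_{\gamma_a}$ twist, i.e.\ making precise the claim ``${\rm Res}_{W_{\gamma_a}}(f_af_b\,d\mathbf{x}_{\gamma_a})\neq 0$ implies ${\rm wt}(\alpha_a)+{\rm wt}(\alpha_b)$ is the fixed value $\sum_{i\in W_{\gamma_a}}(1-q_i)$''. This is the content of the symmetry of the Grothendieck residue pairing on a graded complete intersection (the pairing is nondegenerate and homogeneous of degree equal to the socle degree), and I would cite the standard description of ${\rm Jac}(W_{\gamma_a})$ as a graded Gorenstein ring with one-dimensional socle in the top weight. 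Everything else is the elementary bookkeeping with $\theta_i$, $q_i$, $N_\gamma$, and $\widehat{c}_W$ carried out above; once the normalization is fixed, the additive cancellation giving $\mu_a^++\mu_b^+=0$ is immediate, so the lemma follows.
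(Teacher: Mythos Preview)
Your approach is exactly what the paper intends: it omits the proof entirely, saying only that the lemma is ``easy to check'' from \eqref{bigrading} and \eqref{qst-pairing}, i.e.\ by the bookkeeping you carry out. The one place you flag as needing care is the only slip, and it is easy to fix: the correct value is
\[
{\rm wt}(\alpha_a)+{\rm wt}(\alpha_b)=N_{\gamma_a},
\]
not $\sum_{i\in W_{\gamma_a}}(1-q_i)$. The socle of ${\rm Jac}(W_{\gamma_a})$ lies in \emph{polynomial} weight $\sum_{i\in W_{\gamma_a}}(1-2q_i)$, so the residue condition gives ${\rm wt}(f_a)+{\rm wt}(f_b)=\sum_{i\in W_{\gamma_a}}(1-2q_i)$. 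But \emph{both} $\alpha_a=f_a\,d\mathbf{x}_{\gamma_a}$ and $\alpha_b=f_b\,d\mathbf{x}_{\gamma_a}$ carry the top form, so you must add $\sum_{i\in W_{\gamma_a}}q_i$ twice (not once) when passing from polynomial weights to form weights:
\[
{\rm wt}(\alpha_a)+{\rm wt}(\alpha_b)=\sum_{i\in W_{\gamma_a}}(1-2q_i)+2\sum_{i\in W_{\gamma_a}}q_i=\sum_{i\in W_{\gamma_a}}1=N_{\gamma_a}.
\]
Combined with your (correct) identity $\iota_{\gamma_a}+\iota_{\gamma_b}=\widehat{c}_W-N_{\gamma_a}$, this gives
\[
\mu_a^++\mu_b^+=N_{\gamma_a}+(\widehat{c}_W-N_{\gamma_a})-\widehat{c}_W=0
\]
cleanly, with no leftover $q_i$ term to reconcile.
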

The Hodge grading operator $\theta$ defined in \eqref{hodge-op} is an operator on $\mathbb{H}$.
We introduce an auxiliary operator
\begin{equation}
\label{auxiliary-operator}
D:=z(\partial_z+z^{-1}\theta)z=z^2\partial_z+z(\theta+1).
\end{equation}
It satisfies
\begin{equation}
\label{d-commute-z}
[D, z^{-1}]=-1.
\end{equation}
We will consider a sequence of differential operators
\begin{equation}
\label{virasoro-diff}
\cL_k:=z^{-1/2}D^{k+1}z^{-1/2}, \quad k\geq -1.
\end{equation}
Using Lemma \ref{pairing} and \eqref{d-commute-z}, we obtain
\begin{lemma}
For all integers $k\geq -1$, $\cL_k$ is infinitesimal symplectic, and
\begin{equation}\label{a}
[\cL_m, \cL_n]=(m-n)\cL_{m+n}.
\end{equation}
\end{lemma}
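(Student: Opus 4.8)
The plan is to establish the two assertions separately: first that each $\cL_k$ is infinitesimal symplectic for the symplectic form $\Omega$, and second that the operators satisfy the Witt bracket relation \eqref{a}. For the first assertion I would observe that $D = z(\partial_z + z^{-1}\theta)z$ is conjugate (via $z^{-1/2}$) to $z(\partial_z + z^{-1}\theta)z$ and compute the $\Omega$-adjoint of each factor. The operator $z\partial_z z$ and the operator $z \theta$ act on $\mathbb{H}=\cH(\!(z^{-1})\!)$, and $\Omega(f,g) = {\rm Res}_{z=0}(f(-z),g(z))$; the key input is Lemma \ref{pairing}, which says $\theta$ is anti-self-adjoint with respect to the pairing $(\cdot,\cdot)$ on $\cH$ (since $\eta_{ab}\neq 0$ forces $\mu_a^+ + \mu_b^+ = 0$), together with the standard fact that $z\partial_z + \tfrac12$ is anti-self-adjoint for $\Omega$. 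Conjugating by the (formally self-adjoint up to sign) operator $z^{-1/2}$ then arranges that $\cL_k = z^{-1/2}D^{k+1}z^{-1/2}$ is built as an odd power structure that is infinitesimal symplectic; concretely I would show $D$ itself, after the $z^{-1/2}$ dressing, is infinitesimal symplectic and that $z^{-1/2}D^{k+1}z^{-1/2} = (z^{-1/2}Dz^{-1/2})\cdot(zD'z^{-1})\cdots$ telescopes so the symplectic property propagates to all $k$.

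For the bracket relation \eqref{a} the cleanest route is to reduce everything to the single commutation identity \eqref{d-commute-z}, namely $[D, z^{-1}] = -1$, which is immediate from the definition $D = z^2\partial_z + z(\theta+1)$ since $[\theta, z^{-1}] = 0$ and $[z^2\partial_z, z^{-1}] = -1$. Writing $\cL_k = z^{-1/2}D^{k+1}z^{-1/2}$, I would compute
\[
\cL_m \cL_n = z^{-1/2} D^{m+1} z^{-1} D^{n+1} z^{-1/2},
\]
and use $[D,z^{-1}]=-1$ to move the middle $z^{-1}$ past $D^{n+1}$ at the cost of lower-order terms: $D^{m+1} z^{-1} = z^{-1} D^{m+1} - (m+1) D^m$ by an easy induction on the power. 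Substituting this into $\cL_m\cL_n - \cL_n\cL_m$ and cancelling the symmetric term $z^{-1/2}z^{-1}D^{m+n+2}z^{-1/2}$, the surviving contribution is $z^{-1/2}\big(-(m+1)D^m \cdot z^{-1} D^{n+1} + (n+1) D^n \cdot z^{-1} D^{m+1}\big)z^{-1/2}$ — wait, one must be careful about which $z^{-1}$ is which — so instead I would directly expand $\cL_m\cL_n$ by pushing the internal $z^{-1}$ all the way to one side: $\cL_m\cL_n = z^{-1/2}\big(z^{-1}D^{m+1} - (m+1)D^m\big)D^{n+1}z^{-1/2}$, which gives $\cL_m\cL_n = z^{-3/2}D^{m+n+2}z^{-1/2} - (m+1) z^{-1/2}D^{m+n+1}z^{-1/2}$. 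Antisymmetrizing in $m,n$ kills the first term and yields $[\cL_m,\cL_n] = \big((n+1)-(m+1)\big) z^{-1/2}D^{m+n+1}z^{-1/2} = (n-m)\cL_{m+n}$ — up to the sign convention, which I'd fix by tracking the order of the outer $z^{-1/2}$'s carefully, giving exactly $(m-n)\cL_{m+n}$.

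The bookkeeping of the outer $z^{-1/2}$ factors is really the only subtlety: since $z^{-1/2}$ does not commute with $D$, when multiplying $\cL_m\cL_n = z^{-1/2}D^{m+1}(z^{-1/2}z^{-1/2})D^{n+1}z^{-1/2}$ one genuinely has $z^{-1/2}z^{-1/2} = z^{-1}$ in the middle, so the computation above is valid as written, and the half-integer powers only appear at the two ends where they do not obstruct anything. I expect the main obstacle to be purely notational — verifying that the identity $D^{k+1}z^{-1} = z^{-1}D^{k+1} - (k+1)D^k$ holds with the correct constant (proved by induction from \eqref{d-commute-z}: $D^{k+1}z^{-1} = D^k(z^{-1}D - 1) = (z^{-1}D^k - kD^{k-1})D - D^k = z^{-1}D^{k+1} - kD^k - D^k$), and then matching the sign of the final coefficient against the stated $(m-n)$. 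There is no analytic or geometric input needed beyond Lemma \ref{pairing} and the elementary commutator \eqref{d-commute-z}, so the proof is short once the indices are pinned down.
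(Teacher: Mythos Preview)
Your proposal uses exactly the two ingredients the paper cites---Lemma~\ref{pairing} for the symplectic property and the commutator \eqref{d-commute-z} for the bracket---so the approach matches. Your bracket computation is correct: the induction $D^{m+1}z^{-1}=z^{-1}D^{m+1}-(m+1)D^m$ from $[D,z^{-1}]=-1$ gives $\cL_m\cL_n=z^{-3/2}D^{m+n+2}z^{-1/2}-(m+1)\cL_{m+n}$, and antisymmetrizing yields the Witt relation.

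One correction to your infinitesimal symplectic argument: compositions of infinitesimal symplectic operators are \emph{not} infinitesimal symplectic in general, so the ``telescoping'' you sketch does not work as written. The clean route is to observe that $D$ itself is $\Omega$-\emph{self}-adjoint rather than anti-self-adjoint: one checks $\partial_z^{*}=\partial_z$ and $(z^{-1}\theta)^{*}=z^{-1}\theta$ (the latter using Lemma~\ref{pairing}), so $B=\partial_z+z^{-1}\theta$ satisfies $B^{*}=B$; combined with $z^{*}=-z$ this gives $D^{*}=(zBz)^{*}=(-z)B(-z)=D$. Hence $(D^{k+1})^{*}=D^{k+1}$ for every $k$, and the two outer factors of $z^{-1/2}$ together supply the single minus sign needed for $\cL_k^{*}=-\cL_k$. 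Alternatively, once $\cL_0$ and $\cL_{-1}=z^{-1}$ are known to be infinitesimal symplectic, the identity $\cL_k=\cL_0\cdot z\cdot\cL_{k-1}$ lets you induct using $z^{*}=-z$.
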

Recall $\eta^{ab}$ is the $(a, b)$-th entry of the inverse matrix of $(\eta_{ab})$ and $f(z)$ is the Darbourx coordinate defined in \eqref{darboux-element}.
\begin{lemma}\label{quadratic-Lk}
The differential operator $\cL_k$ induces a quadratic Hamiltonian
\begin{align}
\Omega(\cL_k f, f)=&-\delta_{k,-1}\sum_{a,b}q_0^aq_0^b\eta_{ab}\nonumber\\
&-2\sum_{a}\sum_{\substack{m\geq 0\\m\geq-k}}\prod_{j=1}^{k+1}(\mu_{a}^++m-{1\over 2}+j)p_{m+k,a}q_m^a\label{pq-term}\\
&+\sum_{a, b}\sum_{\ell=0}^{k-1}(-1)^{\ell}\prod_{i=1}^{k+1}(\mu_a^+-\ell-{3\over 2}+i)p_{k-\ell-1,b}p_{\ell, a}\eta^{b,a}.\label{pp-term}
\end{align}
\end{lemma}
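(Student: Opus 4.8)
The plan is to compute the symplectic pairing $\Omega(\mathcal{L}_k f, f) = \mathrm{Res}_{z=0}(\mathcal{L}_k f(-z), f(z))$ directly by applying $\mathcal{L}_k = z^{-1/2}D^{k+1}z^{-1/2}$ to the Darboux expansion \eqref{darboux-element} and extracting residues term by term. First I would understand the action of the basic building block $D = z^2\partial_z + z(\theta+1)$ on the two families of basis vectors appearing in $f(z)$: on $\phi_b z^m$ (the $q$-part) and on $\phi^a(-z)^{-\ell-1}$ (the $p$-part). Since $\theta$ acts diagonally with eigenvalue $\mu_a^+$ on $\phi_a$, the operator $D$ sends $\phi_a z^m \mapsto (\mu_a^+ + m + 1)\phi_a z^{m+1}$, so iterating $k+1$ times produces the Pochhammer-type product $\prod_{j=1}^{k+1}(\mu_a^+ + m + j)$ times $\phi_a z^{m+k+1}$; conjugating by $z^{-1/2}$ on both sides shifts $\mu_a^+ + m$ to $\mu_a^+ + m - \tfrac12$ inside the product and changes the power of $z$ appropriately. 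A parallel computation handles the negative-power part, where $D$ acts on $\phi^a z^{-\ell-1}$ (here one must track the bigrading: on $\phi^a = \phi_b$ with $\eta_{ab}\neq 0$ we have $\mu_b^+ = -\mu_a^+$ by Lemma \ref{pairing}, which is exactly why the $pp$-term in \eqref{pp-term} carries the argument $\mu_a^+ - \ell - \tfrac32 + i$ rather than something symmetric).

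Next I would assemble the residue. Writing $\mathcal{L}_k f(-z)$ as a sum of monomials in $z$ with coefficients in $\cH$, pairing against $f(z)$ via $(\cdot,\cdot)$, and taking $\mathrm{Res}_{z=0}$ picks out the coefficient of $z^{-1}$. The three types of contributions are: (i) $p \times p$ pairings, which survive only for $k \geq 1$ and give the finite sum \eqref{pp-term} over $\ell = 0, \dots, k-1$, with the sign $(-1)^\ell$ coming from the $(-z)$ substitution in the arguments of the Darboux coordinates; (ii) $p \times q$ pairings, giving the term \eqref{pq-term} — here the constraint that both $m\geq 0$ (from $f$) and $m+k \geq 0$, i.e. $m \geq -k$ (so the index on $p_{m+k,a}$ is nonnegative), explains the range $m \geq \max(0,-k)$, and the factor $-2$ arises because each such monomial appears twice (once from $\mathcal{L}_k$ hitting the $q$-part and pairing with the $p$-part of $f$, once the other way), consistently with the symmetry of the quadratic form; (iii) $q \times q$ pairings, which can only contribute when the total $z$-degree is $-1$ with both factors having nonnegative powers — this forces $k = -1$, producing the $\delta_{k,-1}$ term $-\sum_{a,b} q_0^a q_0^b \eta_{ab}$. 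I would also need to double-check the $k=-1$ and $k=0$ edge cases separately since the ranges in \eqref{pp-term} and \eqref{pq-term} degenerate there.

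The step I expect to be the main obstacle is bookkeeping the signs correctly in the $\Z_2$-graded setting: the pairing $(\cdot,\cdot)$ on $\cH$ is super-symmetric, the Darboux coordinates $p_{\ell,a}, q_m^b$ carry parities matching $\phi^a, \phi_b$, and commuting them past one another in $\Omega(\mathcal{L}_k f, f)$ introduces Koszul signs. In particular verifying that the two $p$-$q$ contributions in (ii) genuinely add (rather than partially cancel) to give the clean factor $-2$, and that the signs $(-1)^\ell$ in \eqref{pp-term} land correctly after accounting for both the $(-z)^{-\ell-1}$ factors and the parity of $p_{k-\ell-1,b}, p_{\ell,a}$, requires care; this is the graded analogue of the computation in Givental's work \cite{G-ham} and the exposition \cite{CPS}, so I would follow those references closely, using $\mu_b^+ = -\mu_a^+$ whenever $\eta^{ba}\neq 0$ to rewrite all Pochhammer arguments in terms of $\mu_a^+$ alone. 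Once the quadratic Hamiltonian is in the stated form, Proposition \ref{prop-virasoro} follows by quantizing: the quantization $\widehat{\mathcal{L}_k}$ of $\tfrac12\Omega(\mathcal{L}_k\cdot,\cdot)$ reproduces \eqref{virasoro-operator} after the dilaton shift $q_m^a = t_m^a - \delta^{a,0}\delta_{m,1}$, and the bracket relation \eqref{a} together with Lemma \ref{key-cocycle} yields \eqref{Virasoro-relation} — the cocycle $C(\mathcal{L}_m,\mathcal{L}_n)$ being absorbed into the constant term, consistent with restricting to $k\geq -1$.
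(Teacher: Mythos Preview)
Your proposal is correct and follows essentially the same approach as the paper: compute $\cL_k f$ explicitly on the Darboux expansion, then extract the three types of residue contributions ($qq$, $pq$, $pp$), using Lemma~\ref{pairing} to show the two $pq$ contributions coincide and combine to the factor $-2$. The paper's argument is organized identically (it writes $\Omega(\cL_k f,f)=-\Omega(f,\cL_k f)$ and handles the same three cases in the same order), so there is no substantive difference.
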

\begin{proof}
The proof follows from direction calculations.
Using \eqref{virasoro-diff}, we have
\begin{eqnarray*}
\cL_{k} f(z)
&=&\sum_{a}\sum_{\ell\geq 0}\prod_{i=1}^{k+1}(\mu_a^+-\ell-{3\over 2}+i)p_{\ell, a}\phi^a (-z)^{-\ell-1}z^{k}\\
&&+\sum_{b}\sum_{m\geq 0}\prod_{j=1}^{k+1}(\mu_{b}^++m-{1\over 2}+j)q_m^b\phi_b z^{m+k}.
\end{eqnarray*}
Here if $k=-1$, we have the convention
$$\prod_{i=1}^{k+1}(\mu_a^+-\ell-{3\over 2}+i)=1.$$


The quadratic Hamiltonian $\Omega(\cL_k f, f)$ contains at most three types of terms: $q_iq_j$-term, $q_ip_j$-term, and $p_ip_j$-term. We discuss each type in details using the formula
$$\Omega(\cL_k f, f)=-\Omega(f, \cL_k f).$$
\begin{enumerate}
\item
The $q_iq_j$-terms appear in $-\Omega(f, \cL_k f)$ only if $k=-1$. They are
$$-{\rm Res}_{z=0}\bigg(q_0^a\phi_a, q_0^b\phi_b z^{-1}\bigg)=-\sum_{a,b}q_0^aq_0^b\eta_{ab}.$$
\item
The $q_ip_j$-terms appear in two situations. The term appears in \eqref{pq-term} is a combination of the contributions from the two situations.
\begin{itemize}
\item In the first situation, we have
\begin{align*}
&-\bigg(\sum_{a}\sum_{\ell\geq 0}p_{\ell, a}\phi^a(z)^{-\ell-1}, \sum_{m\geq 0}\sum_{b}\prod_{j=1}^{k+1}(\mu_{b}^++m-{1\over 2}+j)q_m^b\phi_b z^mz^k\bigg)\\
=&-\sum_{a,b}\sum_{\substack{m\geq 0\\m\geq-k}}\prod_{j=1}^{k+1}(\mu_{b}^++m-{1\over 2}+j)p_{m+k,a}q_m^b(\phi^a,\phi_b).
\end{align*}
\item In the second situation, we have
\begin{align*}
&-\bigg(\sum_{b}\sum_{m\geq 0}q_m^b\phi_b (-z)^m, \sum_{\ell\geq 0}\sum_{a}\prod_{i=1}^{k+1}(\mu_a^+-\ell-{3\over 2}+i)p_{\ell, a}\phi^a (-z)^{-\ell-1}z^{k}\bigg)\\
=&\sum_{a,b}\sum_{\substack{m\geq 0\\ m\geq -k}}(-1)^{k}\prod_{i=1}^{k+1}(\mu_a^+-\ell-{3\over 2}+i)q_m^bp_{m+k,a}(\phi_b, \phi^a).
\end{align*}
\end{itemize}
Recall in Lemma \ref{pairing},  we have $(\phi_a, \phi^b)=\delta_a^{b}$. Now taking $\ell=m+k$, we obtain
\begin{align*}
\prod_{j=1}^{k+1}(\mu_{b}^++m-{1\over 2}+j)
&=\prod_{j=1}^{k+1}(-\mu_{a}^++m-{1\over 2}+j)\\
&=(-1)^{k+1}\prod_{j=1}^{k+1}(\mu_a^+-\ell+k+{1\over 2}-j).
\end{align*}
So the terms from two situations are equal.
So we obtain the total contribution is the term in \eqref{pq-term}.
\item
Finally, the $p_ip_j$-terms are (by taking $m=k-\ell-1$)
\begin{align*}
&-\bigg(\sum_{b}\sum_{m\geq 0}p_{m, b}\phi^b(z)^{-m-1}, \sum_{a}\sum_{\ell\geq 0}\prod_{i=1}^{k+1}(\mu_a^+-\ell-{3\over 2}+i)p_{\ell,a}\phi^a (-z)^{-\ell-1}z^{k}\bigg)\\
=&\sum_{a,b}\sum_{\ell=0}^{k-1}(-1)^{\ell}\prod_{i=1}^{k+1}(\mu_a^+-\ell-{3\over 2}+i)p_{k-\ell-1}^{b}p_{\ell,a}\eta^{ba}.
\end{align*}
This is the contribution in \eqref{pp-term}.
\end{enumerate}
\end{proof}

\begin{definition}
Let $\widehat{\cL_k}$ be the quantization operator of the differential operator $\cL_k$ in \eqref{virasoro-diff}, defined by the formulas \eqref{infinitesimal-symp} and \eqref{ccr}.
\end{definition}
Using Lemma \ref{quadratic-Lk},
we can calculate these quantization operators explicitly.
\begin{lemma}
For $k\geq -1$, the quantization operator $\widehat{\cL_k}$ has the form
\begin{eqnarray*}
\widehat{\cL_k}(\mathbf{q})&=&-{\delta_{k,-1}\over 2\hbar}\sum_{a,b}q_0^aq_0^b\eta_{a,b}\nonumber\\
&&-\sum_{a}\sum_{\substack{m\geq 0\\m\geq-k}}\prod_{j=1}^{k+1}(\mu_{a}^++m-{1\over 2}+j)q_m^a {\pop q_{m+k}^{a}}\nonumber\\
&&+{\hbar\over 2}\sum_{a,b}\sum_{\ell=0}^{k-1}(-1)^{\ell}\prod_{i=1}^{k+1}(\mu_a^+-\ell-{3\over 2}+i) {\pop q_{k-\ell-1}^{b}} {\pop q_{\ell}^{a}}\eta^{ba}.
\end{eqnarray*}
\end{lemma}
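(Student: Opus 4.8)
The plan is to obtain $\widehat{\cL_k}$ by quantizing, monomial by monomial, the quadratic Hamiltonian $h_{\cL_k}={1\over 2}\Omega(\cL_k f, f)$ via the rules \eqref{ccr}; by \eqref{infinitesimal-symp} this is precisely the definition of $\widehat{\cL_k}$. Lemma~\ref{quadratic-Lk} already expresses $\Omega(\cL_k f, f)$ as a sum of three homogeneous quadratic pieces --- a $q_0q_0$-piece present only for $k=-1$, a $pq$-piece, and a $pp$-piece --- so after halving we have $h_{\cL_k}$ explicitly, the factor $2$ in \eqref{pq-term} being absorbed into the $1\over 2$.

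First I would pin down which Darboux variables are conjugate. From the expansion \eqref{darboux-element} and the normalization $(\phi_a,\phi^b)=\delta_a^b$ of Lemma~\ref{pairing}, the symplectic form $\Omega$ pairs $p_{\ell,a}$ with $q_\ell^a$; hence in \eqref{ccr} the symbol $p_{\ell,a}$ is replaced by $\hbar\,{\pop q_{\ell}^{a}}$ while $q_m^a$ acts by multiplication. Then the three pieces are quantized independently: the term $-{1\over 2}\delta_{k,-1}\sum_{a,b}q_0^aq_0^b\eta_{ab}$ is of type $\widehat{q_iq_j}$ and gives the first line (with the $1/\hbar$); the term $-\sum_{a}\sum_{m}\prod_{j=1}^{k+1}(\mu_a^++m-{1\over 2}+j)\,p_{m+k,a}q_m^a$ is of type $\widehat{q_ip_j}$ and gives the second line $-\sum_{a}\sum_{m}\prod_{j}(\cdots)\,q_m^a\,{\pop q_{m+k}^{a}}$; and the term ${1\over 2}\sum_{a,b}\sum_{\ell=0}^{k-1}(-1)^{\ell}\prod_{i=1}^{k+1}(\mu_a^+-\ell-{3\over 2}+i)\,p_{k-\ell-1,b}p_{\ell,a}\eta^{ba}$ is of type $\widehat{p_ip_j}$ and gives the third line (with the $\hbar/2$).

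The computation is routine, and the only point requiring care --- hence the main, if modest, obstacle --- is the super-sign bookkeeping. To invoke the rule $\widehat{q_ip_j}:=q_i\,{\pop q_{j}}$ one must first bring $p_{m+k,a}q_m^a$, and likewise the $pp$-monomials, into the normalized order $qp$; for odd $\phi_a$ this produces Koszul signs that must be checked to cancel against the signs already carried by \eqref{pq-term}--\eqref{pp-term} and by the convention $t_m^a\phi_b=(-1)^{|t_m^a|\,|\phi_b|}\phi_b t_m^a$, so that the stated coefficients survive. One also verifies that no spurious normal-ordering constant is generated: the indices $m$ and $m+k$ in the $\widehat{q_ip_j}$ sum never collide in a way that produces a $\delta$-term, and in the borderline case $k=0$ the ordering prescribed in \eqref{ccr} is exactly $q_m^a\,{\pop q_{m}^{a}}$ with no additive term. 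Granting these verifications, the three contributions assemble to the asserted expression for $\widehat{\cL_k}(\mathbf{q})$.
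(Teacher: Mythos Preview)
Your approach is correct and is precisely what the paper does: it simply states that the lemma follows by applying the quantization rules \eqref{ccr} to the quadratic Hamiltonian computed in Lemma~\ref{quadratic-Lk}, without writing out any details. Your proposal is in fact more thorough than the paper's one-line justification, since you explicitly track the three monomial types and flag the super-sign and normal-ordering checks that the paper leaves implicit.
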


Now we can relate the  quantization operators $\{\widehat{\cL_k}\}$ to the differential operators $\{L_k\}$ defined in \eqref{virasoro-operator} by
considering the {\em dilaton shift}
$$q^a_m:=t^a_m-\delta_{m, 1}\delta_{a, 0}.$$
\begin{proposition}\label{Virasoro-quantization}
We have
\begin{equation}
\label{virasoro-equality}
L_k=\widehat{\cL_k}-\delta_{k,0}\cdot{1\over4}\sum_{a}(\mu_a^+-{1\over 2})(\mu_a^++{1\over 2}), \quad k\geq -1.
\end{equation}
\end{proposition}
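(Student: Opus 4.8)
The plan is to verify the identity \eqref{virasoro-equality} by comparing $L_k$ from Definition \ref{def-virasoro} term by term with the explicit expression for $\widehat{\cL_k}$ obtained in the previous lemma, after performing the dilaton shift $q^a_m = t^a_m - \delta_{m,1}\delta_{a,0}$. First I would rewrite the Pochhammer products appearing in $\widehat{\cL_k}$ in the notation of \eqref{pochhammer}: the product $\prod_{j=1}^{k+1}(\mu_a^+ + m - \tfrac12 + j)$ is exactly $\bigl(\mu_a^+ + m + \tfrac12\bigr)_{k+1}$, and similarly $\prod_{i=1}^{k+1}(\mu_a^+ - \ell - \tfrac32 + i) = \bigl(\mu_a^+ - \ell - \tfrac12\bigr)_{k+1}$. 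Using Lemma \ref{pairing} ($\mu_a^+ + \mu_b^+ = 0$ whenever $\eta^{ab}\neq 0$), one reindexes the $pp$-term: setting $m = k-\ell-1$ in the sum over $\ell$ from $0$ to $k-1$ converts $\prod_i(\mu_a^+ - \ell - \tfrac32 + i)$ into $\pm\bigl(\mu_b^+ + m + \tfrac12\bigr)_{k+1}$ with a sign $(-1)^m$, matching the third line of \eqref{virasoro-operator} up to the factor $\hbar^2/2$ versus $\hbar/2$ — note the paper's $L_k$ uses $\hbar^2$ while the quantization \eqref{ccr} is written with a single $\hbar$, so I would first reconcile the normalization of $\hbar$ (replacing $\hbar$ by $\hbar^2$ in \eqref{ccr}, equivalently rescaling) and state this once at the outset.

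Next I would track the effect of the dilaton shift. Substituting $q^0_1 = t^0_1 - 1$ into the quadratic term $\sum_m (\mu_a^+ + m + \tfrac12)_{k+1} q_m^a \partial_{q_{m+k}^a}$ produces, besides the corresponding $t$-term, a linear term $-(\mu_0^+ + 1 + \tfrac12)_{k+1}\,\partial_{q^0_{k+1}}$ coming from $m=1$, $a=0$. Since $\phi_0 = \one = 1|J\rangle$ is the flat identity, one computes $\mu_0^+ = -\widehat{c}_W/2$ from \eqref{bigrading} (as $\mathrm{wt}(\one)=0$, $N_J = 0$, $\iota_J = \sum q_i - \sum q_i$... more precisely $\iota_{J} = \mathrm{age}(J) - \sum q_i$; I would check $\mu_0^+ = -\widehat{c}_W/2$ directly for the identity using that $\deg_\C \one = 0$), so $(\mu_0^+ + \tfrac32)_{k+1} = (\tfrac{3-\widehat{c}_W}{2})_{k+1}$, which is precisely the coefficient of the leading term $-\partial_{t^0_{k+1}}$ in \eqref{virasoro-operator}. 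The dilaton shift has no effect on the $pp$-term (no $p$'s are shifted) and on the $qq$-term it only contributes when $k=-1$: there $\sum q_0^a q_0^b \eta_{ab}$ is unaffected since the shift is at level $m=1$, not $m=0$, so the $\tfrac{1}{2\hbar^2}\delta_{-1,k}\eta_{ab}t_0^at_0^b$ term matches directly.

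Finally, the only remaining discrepancy is the constant term. The quantization $\widehat{\cL_k}$ produced by the recipe \eqref{ccr} carries no $c$-number, whereas $L_0$ in \eqref{virasoro-operator} has the extra term $-\tfrac{\delta_{0,k}}{4}\sum_a (-1)^{|\phi_a|}(\mu_a^+-\tfrac12)(\mu_a^++\tfrac12)$. However, passing from the Hamiltonian to its quantization and then normal-ordering the $p p$-term $\tfrac{\hbar^2}{2}\bigl(\mu_a^+ + m + \tfrac12\bigr)_{k+1}\eta^{ab}\partial_{q_{-m-1}^a}\partial_{q_{m+k}^b}$ is unambiguous here (the two derivative factors have disjoint index ranges for $k\geq -1$ generically), so the genuine source of the constant is the choice made in Proposition \ref{Virasoro-quantization}: it is simply \emph{defined} by adding back this scalar so that $L_0$ agrees with the CohFT-motivated operator \eqref{Virasoro-L0}. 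Thus \eqref{virasoro-equality} is really the definition of how $L_k$ relates to $\widehat{\cL_k}$, and the content of the proposition is that \emph{all other terms match}, which is the term-by-term bookkeeping above. I expect the main obstacle to be purely organizational: keeping the signs straight in the reindexing of the $pp$-term (the interplay of $(-1)^\ell$, $(-1)^k$, and the sign $(-1)^{|p_b||q_a|}$ hidden in the super-convention for moving $q$'s past $\phi$'s) and confirming that the parity signs from Lemma \ref{key-cocycle}'s cocycle do not contribute here because we are quantizing a single $\cL_k$ rather than a commutator. I would close by remarking that Proposition \ref{prop-virasoro} then follows immediately: $\widehat{\cdot}$ is a projective representation by Lemma \ref{key-cocycle}, the cocycle $C(\cL_m,\cL_n)$ vanishes for $m,n\geq -1$ with $m+n\geq -1$ (only $C(\cL_{-1},\cdot)$-type pairings could contribute and these vanish by degree reasons), and the constants in \eqref{virasoro-equality} are consistent with the bracket $[L_m,L_n]=(m-n)L_{m+n}$ by \eqref{a}.
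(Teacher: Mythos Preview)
Your approach is correct and essentially the same as the paper's: both proofs substitute the dilaton shift $q_1^0 = t_1^0 - 1$ into the explicit formula for $\widehat{\cL_k}(\mathbf{q})$, observe that the shift generates exactly the leading term $-\bigl(\tfrac{3-\widehat c_W}{2}\bigr)_{k+1}\partial_{t_{k+1}^0}$ via $\mu_0^+ = -\widehat c_W/2$, and then match the remaining $qq$-, $qp$-, and $pp$-terms against Definition~\ref{def-virasoro}. You are more careful than the paper about the Pochhammer reindexing of the $pp$-term and about the $\hbar$ versus $\hbar^2$ normalisation (the paper silently conflates the two conventions between \eqref{ccr} and \eqref{virasoro-operator}).

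One correction to your closing remark on Proposition~\ref{prop-virasoro}: the cocycle does \emph{not} vanish for all $m,n\geq -1$. In particular $C(\cL_1,\cL_{-1}) = -\tfrac12\sum_a(\mu_a^+-\tfrac12)(\mu_a^++\tfrac12)$ is exactly the nonzero term the paper computes, and this is the whole reason the $\delta_{k,0}$ constant in \eqref{virasoro-equality} is needed: it absorbs that cocycle so that $[L_1,L_{-1}] = [\widehat{\cL_1},\widehat{\cL_{-1}}] = 2\widehat{\cL_0} + C(\cL_1,\cL_{-1}) = 2L_0$. Your parenthetical ``these vanish by degree reasons'' is the slip; the rest of your argument is fine.
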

\begin{proof}
Since $\mu_{a=0}^+=-\widehat{c}_W/2$,
using the dilaton shift $q_1^0=t_1^0-1$,
we have
 \begin{equation} \label{coeff-dilaton}
-(-1)\prod_{j=1}^{k+1}(\mu_{a=0}^++1-{1\over 2}+j)p_{1+k,a=0}
=\prod_{j=1}^{k+1}(-{\widehat{c}_W-1\over 2}+j){\pop t_{k+1}^{0}}.\end{equation}
Thus
\begin{align*}
\widehat{\cL_k}(\mathbf{t})
=&\prod_{j=1}^{k+1}(-{\widehat{c}_W-1\over 2}+j){\pop t_{k+1}^{0}}\\
&-{\delta_{k,-1}\over 2\hbar}\sum_{a,b}t_0^a t_0^b\eta_{a,b}\nonumber\\
&-\sum_{a}\sum_{\substack{m\geq 0\\m\geq-k}}\prod_{j=1}^{k+1}(\mu_{a}^++m-{1\over 2}+j)t_m^a{\partial\over \partial t_{m+k}^{a}}\nonumber\\
&+{\hbar\over 2}\sum_{a,b}\sum_{\ell=0}^{k-1}(-1)^{\ell}\prod_{i=1}^{k+1}(\mu_a^+-\ell-{3\over 2}+i){\partial\over \partial t_{k-\ell-1}^{b}}{\partial\over \partial t_{\ell}^{a}}\eta^{ba}\\
=&L_k(\mathbf{t})+\delta_{k,0}\cdot{1\over4}\sum_{a}(\mu_a^+-{1\over 2})(\mu_a^++{1\over 2}).
\end{align*}
	\end{proof}

{\em A proof of Proposition \ref{Virasoro-relation}.}
If $\{m, n\}\neq \{1, -1\}$, by  Lemma \ref{key-cocycle} and \eqref{a}, we have
\begin{align*}
	[L_m, L_n]=[\widehat{\cL_m}, \widehat{\cL_n}]=\widehat{[\cL_m, \cL_n]}=(m-n)\widehat{\cL_{m+n}}=(m-n)L_{m+n}.
\end{align*}
Otherwise, assume $m=1$ and $n=-1$, we have
\begin{align*}
[L_1, L_{-1}]
&=[\widehat{\cL_1}, \widehat{\cL_{-1}}]\\
&=\widehat{\{\cL_1, \cL_{-1}\}}+C(h_1, h_{-1})\\
&=2\widehat{\cL_0}-{1\over2}\sum_{a}(\mu_a^+-{1\over 2})(\mu_a^++{1\over 2})\\
&=2L_0.
\end{align*}
Notice that here we use \eqref{cocycle}.
\qed

\section{Semi-simple Frobenius manifolds and Virasoro constraints}
\label{sec-max}

In this section, we first review Givental Theorem on the Virasoro constraints for semi-simple Frobenius manifolds, Theorem \ref{semi-simple-Virasoro}.
Then we search some semisimple Frobenius manifolds and apply the Givental Theorem to prove Theorem \ref{thm-max} and Proposition
\ref{two-fermat-semisimple}. 

\subsection{
Frobenius manifolds and Givental Theorem}
\label{sec-givental-thm}
\subsubsection{Conformal Frobenius manifold, quantum connection, and calibration}
Dubrovin \cite{Du} introduced the concepts of {\em Frobenius manifold} to study the geometry of 2D topological field theories.
Briefly speaking, a Frobenius manifold $M$ consists of a quadruple
$\Big(M, \<\cdot,\cdot\>, F, \one \Big)$
with
\begin{itemize}
\item
a flat metric $\<\cdot ,\cdot \>$ on the tangent bundle $T_tM$ for $t\in M$;
\item
a {\em prepotential} $F(t)$, whose $3$-rd covariant derivative gives an {\em associative multiplication} $\star_t$
$$F_{abc}=\<a\star_t b, c\>=\<a, b\star_t c\>;$$
\item a flat vector field $\one$, which is an identity of the multiplication $\star_t$.
\end{itemize}

A Frobenius manifold is called {\em conformal} if there exists an {\em Euler vector field}.
See \cite{Du} for the definitions and details of these concepts.
Examples of conformal Frobenius manifolds exist in Gromov-Witten theory, Fan-Jarvis-Ruan-Witten theory, and Saito's construction of primitive forms for miniversal deformations of isolated critical points of holomorphic functions \cite{Sai}. 

\begin{remark}
Dubrovin's notion of Frobenius manifolds can be generalized to Frobenius super-manifolds, where the quantum multiplication $\star_t$ becomes super-commutative.
The examples appear in this section are all Frobenius manifolds, while some examples in Section \ref{sec-calabi-yau} are Frobenius super-manifolds, see Example \ref{bigrforell}.
\end{remark}

Let $\{\phi_a\}$ be a homogeneous flat basis of the Frobenius manifold and $t_0^a$ be the coordinate of $\phi_a$. There exists a {\em quantum connection}
\begin{equation}
\label{quantum-connection}
\nabla_z:=d-z^{-1}\sum_{a}(\phi_a\star_t )dt_0^a\wedge.
\end{equation}
Following \cite{Du, G-ham}, one can construct a fundamental solution $S_{t}(z)$, called {\em calibration}, which is upper-triangular as in \eqref{upper-triangular}.
Such a calibration is unique up to right multiplication by a constant lower upper-triangular operator $C(z)$ which does not depend on the choice of $t$.



\subsubsection{Givental Theorem}

A Frobenius manifold is called {\em semi-simple} at $t\in M$ if the Frobenius algebra $\star_t$ is semi-simple.
Let $M$ be a Frobenius manifold of (complex) dimension $r$. If $M$ is semi-simple at $t\in M$, then there exist a {\em canonical coordinate system}: $\mathbf{u}=(u_1, \ldots, u_r)$ near $t\in M$, such that
$${\pop u_i}\star_t {\pop u_j}=\delta_{i,j}{\pop u_i}.$$
Let $U:={\rm diag}(u_1, \ldots, u_r)$ and $\Psi_t$ be the base change from $\{{\pop u_i}\}$ to $\{\sqrt{\Delta_i}{\pop u_i}\}$, where
$$\<{\pop u_i}, {\pop u_j}\>={\delta_{i, j}\over\Delta_i}.$$

According to \cite{Du, G-higher, T},  we consider a Frobenius manifold of conformal dimension $K$ with an Euler vector field
 $$E=\sum_{a}\left(1-d_a\right)t_0^a{\partial\over \partial t_0^a}+\sum_{a; d_a=1}\rho_a{\partial\over \partial t_0^a},$$
 there exists a unique lower triangular symplectic transformation $R_t(z)$ such that
\begin{equation}\label{R}
[R_t(z), z^{-1}E\star]=(z\partial_z+\widetilde{\theta})R_t(z),
\end{equation}
and
\begin{equation}\label{R and S}
S_t(z)\sim\Psi_t\, R_t(z)\, e^{U/z}.
\end{equation}
Here $\widetilde{\theta}:={\rm diag}(d_1-K/2, \ldots, d_r-K/2)$.

Let $\mathcal{D}_{pt}(\hbar; \mathbf{q})$ be the Witten-Kontsevich $\tau$-function.
For semisimple Frobenius manifolds, Givental \cite{G-higher} constructed the {\em (abstract) total descendent potential}
\begin{equation}
\label{abstract-descendent}
\mathcal{D}(\hbar; \mathbf{q}):=e^{F^1(t)}\widehat{S}_t^{-1}\widehat{\Psi_t}\widehat{R_t}e^{\widehat{U/z}}\prod_{i=1}^N\mathcal{D}_{pt}(\hbar\Delta_i; \sqrt{\Delta_i}\mathbf{q}^i).
\end{equation}
and the {\em (abstract) total ancestor potential}
\begin{equation}
	\label{gt-formula}
	\mathcal{A}_t(\hbar; \mathbf{q})=
	\widehat{\Psi_t}\widehat{R_t}e^{\widehat{U/z}}\prod_{i=1}^N
	\mathcal{D}_{pt}(\hbar\Delta_i; \sqrt{\Delta_i}\mathbf{q}^i).
	\end{equation}
Here  $q^a_m=t^a_m-\delta_{m, 1}\delta_{a, 0}$ is the dilaton shift, and $F^1(t)$ is the genus one potential. 

According to Kontsevich Theorem \cite{Kon} on the Witten conjecture \cite{Wit-conj}, the Witten-Kontsevich $\tau$-function $\mathcal{D}_{pt}(\hbar; \mathbf{q})$ is annihilated by differential operators
$$
(z^{-1/2}(z\partial_z z)^{k+1}z^{-1/2})^{\widehat{}}+{\delta_{0,k}\over 16}, \quad k\geq -1.$$
For a conformal semisimple Frobenius manifold, let $\rho=\sum_{a; d_a=1}\rho_a{\partial\over \partial t_0^a}$ in the Euler vector field $E$.
Replacing the auxiliary operator in \eqref{auxiliary-operator} by $z(\partial_z+z^{-1}\widetilde{\theta})z+\rho$, and defining differential operators by \eqref{virasoro-equality}, which we denote by $L_k^{\rm Giv}$ here, Givental proved an equality between two operators in \cite[Theorem 8.1]{G-ham} and deduced the following result from Kontsevich Theorem.
\begin{theorem}
\label{semi-simple-Virasoro}
 \cite[Proposition 7.7]{G-ham}
The Virasoro operators $\{L_k^{\rm Giv}\}_{k\geq -1}$ annihilate the total descendant potential $\mathcal{D}(\hbar; \mathbf{q})$ of a semi-simple Frobenius manifold.
That is,
	$$L_k^{\rm Giv}\mathcal{D}(\hbar; \mathbf{q})=0, \quad k\geq -1.$$
\end{theorem}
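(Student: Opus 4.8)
The plan is to reduce the statement to Kontsevich's theorem on the Witten--Kontsevich $\tau$-function via the Givental formula \eqref{abstract-descendent}, by tracking how the Virasoro operators are conjugated through each quantized symplectic transformation appearing there. First I would observe that the assertion for the trivial (one-point) theory is precisely Kontsevich's theorem: $\mathcal{D}_{pt}(\hbar;\mathbf{q})$ is annihilated by $\big(z^{-1/2}(z\partial_z z)^{k+1}z^{-1/2}\big)^{\widehat{}}+\tfrac{\delta_{0,k}}{16}$ for $k\geq -1$, and hence so is the product $\prod_{i=1}^N\mathcal{D}_{pt}(\hbar\Delta_i;\sqrt{\Delta_i}\mathbf{q}^i)$ after accounting for the scalings $\Delta_i$ and the shift $\rho$ in the Euler field. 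The combined operator on the product is exactly $L_k^{\rm Giv}$ written in the \emph{canonical} coordinates, i.e. the quantization of $z^{-1/2}D^{k+1}z^{-1/2}$ with the auxiliary operator $D$ built from $z(\partial_z+z^{-1}\widetilde{\theta})z+\rho$ and the diagonal matrix $\widetilde{\theta}$.

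Next I would conjugate this operator successively by $e^{\widehat{U/z}}$, $\widehat{R_t}$, $\widehat{\Psi_t}$, and finally $\widehat{S_t}^{-1}$, using Corollary \ref{cocycle-commute} at each stage: $\widehat{T}\,\widehat{A}\,\widehat{T}^{-1}=\widehat{TAT^{-1}}+C_T(A)$. The key algebraic input is the defining relation \eqref{R} of $R_t(z)$, namely $[R_t(z),z^{-1}E\star]=(z\partial_z+\widetilde{\theta})R_t(z)$, together with the compatibility \eqref{R and S} between $S_t$, $\Psi_t$, $R_t$ and $e^{U/z}$. These identities are exactly what is needed to show that the \emph{operator} $z^{-1/2}D^{k+1}z^{-1/2}$ transforms covariantly: conjugation by the upper-triangular/lower-triangular pieces carries the ``canonical-coordinate'' auxiliary operator to the ``flat-coordinate'' one $z(\partial_z+z^{-1}\widetilde\theta)z+\rho$ written in the $t_0^a$-frame, which is what defines $L_k^{\rm Giv}$ on $\mathcal{D}(\hbar;\mathbf{q})$. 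The cocycle corrections $C_T(A)$ are constants, and after summing them over the conjugations one checks they cancel against the $\tfrac{1}{16}\delta_{0,k}$ contributions from the $N$ copies of $\mathcal{D}_{pt}$ and against the $-\tfrac14\sum_a(\mu_a^+-\tfrac12)(\mu_a^++\tfrac12)$ term built into the definition \eqref{virasoro-equality} of $L_k$ — this is the point where the supertrace identity and the balancing of constants matters. The factor $e^{F^1(t)}$ commutes past everything since it is a scalar multiplication independent of $\mathbf{q}$.

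I would then invoke the statement of \cite[Theorem 8.1]{G-ham} directly: Givental proves precisely this operator identity, so once the framework is set up the theorem follows. In the write-up I would keep the conjugation computations at the level of ``apply Corollary \ref{cocycle-commute} and use \eqref{R}, \eqref{R and S}'' rather than expanding the quadratic Hamiltonians termwise, since those are the routine calculations carried out in \cite{G-ham}.

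The main obstacle, and the step deserving the most care, is the bookkeeping of the central (cocycle) constants: the quantization of quadratic Hamiltonians is only a projective representation, so each conjugation in \eqref{abstract-descendent} introduces a scalar $C_T(\cdot)$, and one must verify that the total scalar defect matches exactly the $\delta_{0,k}$-correction in \eqref{virasoro-equality} plus $N$ copies of Kontsevich's $\tfrac{1}{16}\delta_{0,k}$. Getting the signs right in the presence of odd classes — i.e. the $(-1)^{|\phi_a|}$ weighting in the supertrace \eqref{super-trace} — is the delicate part; everything else is a formal consequence of \eqref{R}, \eqref{R and S}, and Corollary \ref{cocycle-commute}.
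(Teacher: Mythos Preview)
Your proposal is correct and follows precisely the approach the paper indicates: the paper does not give its own detailed proof but cites the result as \cite[Proposition 7.7]{G-ham}, explaining that Givental deduces it from Kontsevich's theorem via an operator identity \cite[Theorem 8.1]{G-ham}, which is exactly the conjugation-through-$\widehat{S_t},\widehat{\Psi_t},\widehat{R_t},e^{\widehat{U/z}}$ scheme you outline. Your flagging of the cocycle bookkeeping as the delicate step is on target and matches the authors' own (suppressed) appendix, where they verify the cancellation $C(R)+C(U)=C(S)$ explicitly for $L_2$.
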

We call it {\em Givental Theorem on the Virasoro constraints for semi-simple Frobenius manifolds}, or simply {\em Givental Theorem} in this paper.

\subsubsection{Givental-Teleman Theorem}
A CohFT is called semisimple if the underlying Frobenius manifold is semisimple.
The following result is a consequence of the famous Givental-Teleman Theorem \cite{T}, which says the semisimple CohFT is uniquely reconstructed from the underlying Frobenius manifold.
\begin{theorem}\label{GT}
\cite{T}
For a semi-simple CohFT, the abstract total ancestor potential is the same as the formal total ancestor potential defined from the CohFT.
\end{theorem}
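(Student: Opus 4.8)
\textbf{Proof proposal for Theorem \ref{GT}.} The statement asserts that for a semisimple CohFT, the two a priori different total ancestor potentials coincide: the \emph{formal} one, built directly from the genus-$g$ correlators $\LD\prod\bt(\psi_i)\RD_{g,k}$ of the CohFT as in \eqref{ancestor-qst}, and the \emph{abstract} one $\mathcal{A}_t(\hbar;\mathbf{q})$ of \eqref{gt-formula}, assembled out of the quantization data $\Psi_t$, $R_t(z)$, $U$ acting on a product of Witten--Kontsevich tau-functions. Both potentials are determined by their underlying genus-zero data (a conformal semisimple Frobenius manifold) together with a reconstruction recipe, so the plan is to invoke the Givental--Teleman classification theorem \cite{T}: the space of semisimple CohFTs with a fixed underlying Frobenius manifold is a torsor under the ``lower-triangular'' (Givental) group, and the $R$-matrix action on the topological field theory associated to $(M,\star_t,\langle\cdot,\cdot\rangle)$ produces \emph{every} such CohFT, in particular the given one. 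The first step is therefore to recall precisely how Teleman's theorem reconstructs the cohomology classes $\Lambda^{\clubsuit}_{g,k}$ of the CohFT from the triple $(\Psi_t, R_t(z), T(z))$ (where the translation $T(z)=z(\mathbf{1}-R_t(z)\mathbf{1})$ encodes the dilaton/string-type shift), and to observe that for the FJRW/PV/KL CohFTs this data is exactly the $(\Psi_t, R_t)$ appearing in \eqref{R}--\eqref{R and S}.

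The second step is to match the \emph{numerical} output of the reconstructed CohFT with the abstract ancestor potential. Here one uses Givental's dictionary between the graph-sum formula for $R$-matrix-reconstructed CohFT correlators and the quantization formula $\widehat{\Psi_t}\widehat{R_t}e^{\widehat{U/z}}$ acting on $\prod_i \mathcal{D}_{pt}(\hbar\Delta_i;\sqrt{\Delta_i}\mathbf{q}^i)$: integrating the reconstructed $\Lambda_{g,k}$ against $\psi$-classes over $\overline{\mathcal{M}}_{g,k}$ reproduces exactly the Feynman-diagram expansion that defines $\mathcal{A}_t$. This is the content that makes \eqref{gt-formula} an \emph{equality} rather than a definition of two different objects; concretely, the vertices of the graph sum are Witten--Kontsevich descendant integrals (diagonalized by $\Psi_t$, rescaled by $\Delta_i$), the edges carry the propagator built from $R_t(z)$, and the leaves carry $R_t(z)$-dressed insertions, which is precisely how the quantized operators act on a product of point descendant potentials.

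The main obstacle — or rather, the main \emph{point} requiring care — is that Teleman's theorem has a genuine hypothesis: it applies to CohFTs satisfying the axioms of Kontsevich--Manin (flat unit, splitting/gluing compatibility with the pairing) over a semisimple base, and one must know that the FJRW, PV, and KL CohFTs for the admissible pair $(W,G)$ satisfy these axioms and that their Frobenius manifolds are semisimple in the relevant cases. The flat-identity axiom is exactly \cite[Theorem 4.2.2]{FJR}, \cite[Theorem 5.1.2]{PV}, \cite[Theorem 4.6]{KL} recalled in Section \ref{string}; the splitting axioms are part of the CohFT structure in each of \cite{FJR1, FJR, PV, KL}; and semisimplicity is established case by case in Section \ref{sec-max}. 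A secondary subtlety is the parity/super-grading: the state space $\cH_{W,G}$ is only $\Z_2$-graded, so one works with Teleman's classification in the $\Z_2$-graded (super-CohFT) setting, which is a routine but necessary extension. Once these hypotheses are in place, the equality of the formal and abstract ancestor potentials is immediate from the uniqueness clause of Givental--Teleman: both are the total ancestor potential of the \emph{unique} semisimple CohFT with the given genus-zero data. Hence no computation is needed beyond citing \cite{T} (and \cite{G-higher, G-ham} for the ancestor/descendant bookkeeping), and the theorem follows.
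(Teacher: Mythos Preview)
Your proposal is correct and aligns with the paper's treatment: the paper does not supply its own proof of Theorem~\ref{GT} but simply cites it as a consequence of Teleman's classification theorem \cite{T}, and your sketch is precisely an unpacking of what that citation encodes (reconstruction via the $R$-matrix action on the trivial CohFT, matching with Givental's quantization formula, and the uniqueness clause). The paper compresses all of this into the single sentence preceding the theorem statement, so your expansion is faithful rather than divergent.
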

See \eqref{formal-fjrw} and \eqref{GW-ancestor} for examples of formal total ancestor potentials.


\subsubsection{Frobenius manifolds in LG A-models}
In LG A-models, there is a conformal formal Frobenius (super)-manifolds (see \cite[Corollary 4.2.8]{FJR} for example)
$$\Big(\cH^{\clubsuit}_{W, G}, (\cdot, \cdot), F_0^{\clubsuit, (W, G)}, 1|J\>, E\Big).$$
Here
\begin{itemize}
\item the metric is given by the pairing $(\cdot,\cdot)$ in \eqref{qst-pairing};
\item  the homogeneous basis $\{\phi_a=[f_a]d\bx_{\gamma_a}\}$ of $\cH_{W,G}$ is a flat basis;
\item the prepotential is given by the genus zero potential
$$F_0^{\clubsuit, (W, G)}(t)=\sum_{n\geq3}{1\over n!}\LD t, \cdots, t\RD_{0,n}^{\clubsuit, (W, G)}, \quad t=\sum\limits_{a}t_0^a\phi_a.$$
\item
the Euler vector field $E$ is of the form
\begin{equation}
\label{euler-givental}
E=\sum_{a}\left(1-\deg_\C\phi_a\right)t_0^a{\partial\over \partial t_0^a}.
\end{equation}
\item the conformal dimension $K$ is the central charge $\widehat{c}_W$.
\end{itemize}
The quantum product $\star_t$ is giving by genus-zero three-point correlators
\begin{equation}
\label{quantum-product}
(\phi_a\star_t\phi_b, \phi_c)={\partial^3\over \partial t_{0}^{a}\partial t_{0}^{b}\partial t_{0}^{c} }F_0^{\clubsuit, (W, G)}(t)
:=\LL\phi_a, \phi_b, \phi_c\RR_{0,3}^{\clubsuit}(t).
\end{equation}

For arbitrary $t$, one can construct a formal total ancestor potential from the CohFT
\begin{equation}
\label{formal-fjrw}
\mathcal{A}^{\clubsuit}_{W,G}(t)
:=\exp\left(\sum_{g}\hbar^{2g-2}\sum_{k}{1\over k!}\LL\prod_{i=1}^{k}\bt(\psi_i)
\RR_{g,k}^{\clubsuit, (W, G)}(t)\right).
\end{equation}
The quantum invariants in the definition of  the formula \eqref{ancestor-qst} are replaced by the double brackets, which are formal power series of $t$,
$$\LL\prod_{i=1}^{k}\bt(\psi_i)\RR_{g,k}^{\clubsuit, (W, G)}(t):= \sum_{m\geq 0} {1\over m!}\LD\prod_{i=1}^{k}\bt(\psi_i), t, \cdots, t\RD_{g,k+m}^{\clubsuit, (W, G)}.$$
\begin{remark}
We make a few remarks when applying the results in Section \ref{sec-givental-thm} to the LG A-models in Section \ref{semisimple-mirror} and Section \ref{sec-nonmax}.
\begin{enumerate}
\item  According to Givental-Teleman Theorem \ref{GT}, if the Frobenius manifold is generically semisimple at $t$,  then the abstract total ancestor potential  $\mathcal{A}_t(\hbar; \mathbf{q})$ in \eqref{gt-formula} is the same as the geometric total ancestor potential $\mathcal{A}^{\clubsuit}_{W,G}(t)$ in \eqref{formal-fjrw}.
 \item According to \cite{Mil}, the potential $\mathcal{A}^{\clubsuit}_{W,G}$ in \eqref{ancestor-qst} is the limit of the geometric total ancestor potential $\mathcal{A}^{\clubsuit}_{W,G}(t)$ in \eqref{formal-fjrw} at $t=0$.
\item We can choose the calibration operator $S_t(z)$ by
$$(S_{t}(z)(\phi_a), \phi_b):=(\phi_a, \phi_b)+\LL\frac{\phi_a}{z-\psi_1}, \phi_b\RR_{0, 2}(t).$$
Taking the limit $t\to 0$, we see the total descendent potential $\mathcal{D}(\hbar; \mathbf{q})$ in \eqref{abstract-descendent} in LG A-model theories is the same as the potential $\mathcal{A}^{\clubsuit}_{W,G}$, up to a constant.
\item In Section \ref{semisimple-mirror} and Section \ref{sec-nonmax}, the LG A-model theories we consider satisfies the following condition,
\begin{equation}\label{bigrade-eq}
\mu_a^+=\mu_a^-, \quad \forall \phi_a\in \mathcal{H}_{W, G}.
\end{equation}
See Lemma \ref{bigrade-Gmax}. 
\item If \eqref{bigrade-eq} holds, then $\mu_a^+=\deg_\C\phi_a-\widehat{c}_W/2$. Since the term $\rho$ in the Euler vector field vanishes, the Virasoro operator $L_k^{\rm Giv}$ in Theorem \ref{semi-simple-Virasoro} matches the Virasoro operator $L_k$ defined in \eqref{virasoro-operator}.
\end{enumerate}
\end{remark}
Thus if the Frobenius manifold $(\cH^{\clubsuit}_{W, G}, (\cdot, \cdot), F_0^{\clubsuit, (W, G)}, 1|J\>, E)$ is generically semisimple, Conjecture \ref{VC} is a consequence of Theorem \ref{semi-simple-Virasoro}.
Next we will give some examples.

\subsection{Semisimple Frobenius manifolds via LG mirror symmetry}
\label{semisimple-mirror}
According to the mirror symmetry statements proved in \cite{HPSV} and \cite{HLSW}, we obtain a large class of semisimple Frobenius manifolds in LG A-model theories from admissible LG pairs with an invertible polynomial and the maximal group.
\begin{definition}
[Invertible polynomials]
A polynomial $W$ is called {\em invertible} if up to a rescaling of the variables, $W$ can be written in the form
\begin{equation}
\label{invertible-poly}
W=\sum_{i=1}^{n}\prod_{j=1}^{n}x_j^{a_{ij}}
\end{equation}
and its \emph{exponent matrix} $E_W:=\left(a_{ij}\right)_{n\times n}$ is an invertible matrix in ${\rm GL}(n, \mathbb{Q})$.
\end{definition}
According to the classification by Kreuzer and Skarke \cite{KS},
an invertible polynomial must be a direct sum of atomic types ($a\geq2, a_i\geq2$):
\begin{enumerate}
\item \emph{Fermat atomic type:}	  $x^a.$
\item \emph{Chain atomic type:}	$x_1^{a_1}x_2 + x_2^{a_2}x_3 + \ldots + x_{k-1}^{a_{k-1}}x_k + x_k^{a_k}.$
\item \emph{Loop atomic type:}	         $x_1^{a_1}x_2 + x_2^{a_2}x_3 + \ldots + x_k^{a_k}x_1.$
\end{enumerate}
We slightly abuse the notation and still call $W$ is of {\em Fermat type} if
\begin{equation}
\label{fermat-type}
W=x_1^{a_1}+\cdots +x_n^{a_n}.
\end{equation}
\begin{lemma}\label{bigrade-Gmax}
Let $(W, G_W)$ be an admissible LG pair and $W$ be an invertible polynomial. For any $\phi_a\in \mathcal{H}_{W, G_W}$, we have $\mu_a^+=\mu_a^-$.
\end{lemma}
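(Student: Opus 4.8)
\emph{Proposal.}
By the definitions in \eqref{bigrading}, for a monomial generator $\phi_a=\alpha|\gamma\rangle$ with $\alpha=[{\bf x}_\gamma^{\bf m}]\,d{\bf x}_\gamma$ one has $\mu_a^+-\mu_a^-=2\,{\rm wt}(\alpha)-N_\gamma$, so the assertion $\mu_a^+=\mu_a^-$ is equivalent to ${\rm wt}(\alpha)=N_\gamma/2$ for every homogeneous basis element. When $\gamma$ is narrow this is immediate, since then $N_\gamma=0$ and $\alpha=d{\bf x}_\gamma$ has ${\rm wt}(\alpha)=0$. Hence the whole content of the lemma is the statement that, for broad $\gamma\in G_W$, every $G_W$-invariant monomial in ${\rm Jac}(W_\gamma)\,d{\bf x}_\gamma$ has weight exactly $N_\gamma/2$, and this is what I would aim to prove.

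The plan is to set up the following scaffolding. First, for $\gamma\in G_W$ the restriction $W_\gamma=W|_{{\rm Fix}(\gamma)}$ is again an invertible polynomial, and the restriction homomorphism $G_W\to{\rm Aut}({\rm Fix}(\gamma))$ surjects onto $G_{W_\gamma}$; both follow by running through the atomic decomposition of $W$ (a Fermat, chain, or loop summand, restricted to the fixed locus of a diagonal symmetry, either disappears completely or restricts to a Fermat block, a tail sub-chain, or the whole loop respectively, and the corresponding diagonal symmetry groups restrict onto those of the surviving pieces using divisibility of $\mathbb{C}^\times$). Thus $\cH_\gamma=\bigl({\rm Jac}(W_\gamma)\,d{\bf x}_\gamma\bigr)^{G_{W_\gamma}}$. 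Since ${\rm Fix}(\gamma^{-1})={\rm Fix}(\gamma)$ and hence $W_{\gamma^{-1}}=W_\gamma$, the nondegenerate pairing \eqref{qst-pairing} restricts to a perfect pairing on $\cH_\gamma$, and by the defining property of the Grothendieck residue it pairs the ${\rm wt}=w$ part of $\cH_\gamma$ with the ${\rm wt}=N_\gamma-w$ part. Next, because $W_\gamma$ is invertible, ${\rm Jac}(W_\gamma)$ has a monomial basis whose socle is one-dimensional and spanned by (a nonzero multiple of) a single monomial proportional to the Hessian $\mathrm{hess}(W_\gamma)$, so Poincar\'e duality takes a basis monomial ${\bf x}_\gamma^{\bf m}$ to a basis monomial ${\bf x}_\gamma^{{\bf m}'}$ with ${\bf x}_\gamma^{\bf m}{\bf x}_\gamma^{{\bf m}'}\sim\mathrm{hess}(W_\gamma)$. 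A character computation shows $\mathrm{hess}(W_\gamma)\,d{\bf x}_\gamma$ transforms under $(\lambda_i)\in G_{W_\gamma}$ by $\prod_i\lambda_i^{-1}$, so the character of ${\bf x}_\gamma^{{\bf m}'}\,d{\bf x}_\gamma$ is the inverse of that of ${\bf x}_\gamma^{\bf m}\,d{\bf x}_\gamma$; hence the Poincar\'e dual of a $G_{W_\gamma}$-invariant monomial is again $G_{W_\gamma}$-invariant, and ${\rm wt}({\bf x}_\gamma^{\bf m}\,d{\bf x}_\gamma)+{\rm wt}({\bf x}_\gamma^{{\bf m}'}\,d{\bf x}_\gamma)=N_\gamma$.

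The hard part, and the step where the hypothesis $G=G_W$ is used essentially, is to upgrade this to: an invariant monomial has the \emph{same} weight as its Poincar\'e dual, equivalently $\cH_\gamma$ is weight-homogeneous; once this is known, the weight-$(w,N_\gamma-w)$ symmetry of the residue pairing forces the common weight to be $N_\gamma/2$ and we are done. I would attack this sector by sector after the atomic decomposition of $W_\gamma$: a Fermat block $x^a$ cannot occur in $W_\gamma$ at all when $\cH_\gamma$ carries a broad element (in ${\rm Jac}(W_\gamma)$ one has $x^{a-1}=0$, so the $x$-exponent of any basis monomial is $\le a-2$, while $G_{W_\gamma}$-invariance of ${\bf x}_\gamma^{\bf m}\,d{\bf x}_\gamma$ under the $\mu_a$-rotation of $x$ forces that exponent to be $\equiv-1\bmod a$); for chain-tail and loop blocks one uses the explicit descriptions of their diagonal symmetry groups and Jacobian-ring bases (as in the Berglund--H\"ubsch--Krawitz literature on invertible polynomials) to pin down the invariant monomials. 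An alternative, possibly shorter, route is to transport the statement across the BHK mirror isomorphism $\cH_{W,G_W}\cong{\rm Jac}(W^T)$: the right-hand side is the (identity) sector of the non-admissible pair $(W^T,\{\mathrm{id}\})$, for which \eqref{bigrading} gives $\mu^-=-\mu^+$ identically, and matching this with the A-model bigrading of $\cH_{W,G_W}$ yields $\mu^+=\mu^-$; but making the grading comparison precise reduces again to the same type of bookkeeping, which is the main obstacle either way.
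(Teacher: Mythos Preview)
Your reduction to ${\rm wt}(\alpha)=N_\gamma/2$ is correct, and the scaffolding you build (invertibility of $W_\gamma$, surjectivity of $G_W\twoheadrightarrow G_{W_\gamma}$, the Poincar\'e-duality/character argument showing that the set of weights of invariants is symmetric about $N_\gamma/2$) is sound. But that symmetry alone does not force every invariant to sit at the midpoint, and you correctly flag this as ``the hard part''. At that point your proposal defers to the explicit description of the invariant monomials in each atomic block---which is precisely the entire content of the paper's proof. The paper skips your duality scaffolding altogether: it reduces to $W$ atomic (not $W_\gamma$), invokes \cite[Lemma~1.7]{Kr} to list the broad $G_W$-invariants exhaustively (none for Fermat; for a loop of even length $k$, $\gamma={\rm Id}$ and $\alpha$ is one of two explicit monomials; for a chain, $N_\gamma$ is even and $\alpha$ is one explicit monomial), and then checks ${\rm wt}(\alpha)=N_\gamma/2$ by a one-line computation using the weight relations of $W$.

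So your approach is not wrong, but the Poincar\'e-duality layer buys you nothing: the weight-symmetry it yields is strictly weaker than what you need, and closing the gap requires exactly the Krawitz classification you invoke at the end anyway. The paper's route is shorter because it goes there directly. Your alternative via the BHK isomorphism $\cH_{W,G_W}\cong{\rm Jac}(W^T)$ would also work (and in some sense explains \emph{why} the lemma is true), but as you note, making the grading comparison precise again comes down to the same monomial bookkeeping, which is carried out in \cite{Kr}.
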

\begin{proof}
If $W$ is a direct sum of $W_1, \ldots, W_N$, then $G_W=G_{W_1}\times \cdots \times G_{W_N}$. Now it suffices to prove this lemma in the case $W$ is of atomic type. By \eqref{bigrading}, for $\phi_a=\alpha|\gamma\>$, we only need to consider the case $\phi_a$ is broad, and prove that $N_\gamma=2{\rm wt}(\alpha)$. For $\phi_a$ is broad, by \cite[Lemma 1.7]{Kr}, we have
\begin{enumerate}
\item If $W$ is Fermat type, then there is no broad element.
\item If $W$ is loop type with $k$ variables, then  $\gamma={\rm Id}$, $k$ is even, and
$$\alpha=\prod_{i=1}^k x_i^{\delta_i^{\rm even}(a_i-1)}dx_i \quad \text{or} \quad \prod_{i=1}^k x_i^{\delta_i^{\rm odd}(a_i-1)}dx_i.$$
\item If $W$ is chain type with $k$ variables, then $N_\gamma$ is even, and
$$\alpha=\prod_{k-N_\gamma+1}^k x_i^{\delta_{k-i}^{\rm odd}(a_i-1)}dx_i.$$
\end{enumerate}
Keeping in mind $W$ is of weight $1$, one can check ${\rm wt}(\alpha)=N_{\gamma}/2$ easily.
\end{proof}

 If $W$ is an invertible polynomial of the form \eqref{invertible-poly}, Berglund-H\"ubsch \cite{BH} constructed a mirror polynomial
$$
W^T=\sum_{i=1}^{n}\prod_{j=1}^{n}x_j^{a_{ji}}.
$$
So the exponent matrix $E_{W^T}$ of $W^T$ is the transpose matrix of $E_W$.

Now we recall two mirror symmetry statements proved in \cite{HLSW} and  \cite{HPSV}.
\begin{theorem}
[All genus mirror symmetry for invertible polynomials]
\label{mirror them}
\cite{HLSW, HPSV}
\label{thm-HPSV}
Let $(W, G_W)$ be an admissible LG pair and $W$ be an invertible polynomial. There exists a primitive form $\zeta$ and a bigrading preserving mirror map $\cH^A_{W, G_W}\to {\rm Jac}(W^T)$. Under the mirror map, there are :
\begin{enumerate}
\item (Polishchuk-Vaintrob to Saito-Givental mirror theorem \cite[Theorem 1.3]{HPSV})
$$\mathscr{A}^{\rm PV}_{W,G_W}=\mathscr{A}^{\rm SG}_{W^T, \zeta}.$$
\item (Fan-Jarvis-Ruan-Witten to Saito-Givental mirror theorem \cite[Theorem 1.2]{HLSW})
If $W$ has no weight-${1\over 2}$ chain variable,
$$\mathscr{A}^{\rm FJRW}_{W,G_W}=\mathscr{A}^{\rm SG}_{W^T, \zeta}.$$
\end{enumerate}
\end{theorem}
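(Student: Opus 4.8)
The plan is to reconstruct both total ancestor potentials from their genus-zero data via the Givental--Teleman formula and then to match that genus-zero data through an explicit mirror map. On the B-side, $W^T$ is a non-degenerate quasi-homogeneous polynomial, so a generic member of its miniversal deformation is a Morse function; hence Saito's Frobenius manifold attached to $(W^T,\zeta)$ is conformal and generically semisimple, and $\mathscr{A}^{\rm SG}_{W^T,\zeta}$ is by definition the abstract total ancestor potential \eqref{gt-formula} of that Frobenius manifold. On the A-side, I would first establish that the FJRW (resp.\ PV) Frobenius manifold of $(W,G_W)$ is conformal with Euler vector field \eqref{euler-givental} and generically semisimple; one route is a dimension-and-grading argument that reduces to the atomic summands via the direct-sum decomposition $G_W=G_{W_1}\times\cdots\times G_{W_N}$, another is to transport semisimplicity from the B-side once the mirror map has been built. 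By Givental--Teleman (Theorem \ref{GT}), the geometric total ancestor potential then coincides with the abstract one, so in both theories the all-genus potential is completely determined by the conformal Frobenius manifold germ together with the genus-one term.

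The heart of the argument is the construction of the mirror map: a bigrading-preserving linear isomorphism $\cH^A_{W,G_W}\to {\rm Jac}(W^T)$ that upgrades to an isomorphism of conformal Frobenius manifolds. I would assemble the following ingredients: (i) equality of dimensions and of bigradings --- on the A-side the bigrading satisfies $\mu^+_a=\mu^-_a$ by Lemma \ref{bigrade-Gmax}, and on the B-side this is the symmetry of the Hodge grading of the Milnor ring of $W^T$, so the two state spaces have the same Poincar\'e polynomial; (ii) matching of the pairings, i.e.\ the Grothendieck residue pairing \eqref{qst-pairing} on the A-side against the residue pairing determined by the primitive form $\zeta$ on the B-side; (iii) matching of the Frobenius algebra structures, i.e.\ of the genus-zero prepotentials. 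For (iii) I would use that a conformal semisimple Frobenius manifold is reconstructed from WDVV together with the Euler/grading constraint and finitely many initial three- and four-point functions, and then verify that these finitely many structure constants agree after the mirror map by direct computation on the atomic building blocks ($x^a$, chain type, loop type); the correct primitive form $\zeta$ is exactly the one for which the Saito flat coordinates correspond to the homogeneous flat basis $\{\phi_a\}$ of $\cH_{W,G_W}$.

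Once genus zero is matched, the higher-genus statement is essentially automatic. Since both Frobenius manifolds are conformal with Euler fields, and since $\mu^+_a=\mu^-_a$ on the A-side (so the $\rho$-term in the Euler vector field is absent), the $R$-matrix $R_t(z)$ is the \emph{unique} lower-triangular symplectic solution of the homogeneity equation \eqref{R}, and the canonical coordinates (hence $U$) and the transition matrix $\Psi_t$ are likewise determined by the Frobenius manifold germ alone. Therefore the mirror map, being an isomorphism of conformal Frobenius manifolds, intertwines the triples $(\Psi_t,R_t,U)$ on the two sides, and Givental's formula \eqref{gt-formula} yields $\mathscr{A}^{\rm PV}_{W,G_W}=\mathscr{A}^{\rm SG}_{W^T,\zeta}$, and likewise for FJRW. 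The ``no weight-$\tfrac12$ chain variable'' hypothesis in part (2) enters only to ensure that the analytically constructed FJRW CohFT agrees with the algebraic one --- equivalently, that the $W$-spin index/concavity computations behave as in the PV construction; part (1) needs no such restriction because the PV theory is built algebraically from matrix factorizations and carries the Kapustin--Li residue pairing throughout.

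The main obstacle, as is typical for all-genus LG mirror symmetry, is step (iii) in the presence of broad sectors. For chain and loop atomic types the broad elements of $\cH_{W,G_W}$ contribute nontrivially to the genus-zero correlators, so one cannot reconstruct the theory from the narrow (Fermat-like) part alone, and the explicit computation of the initial structure constants --- together with the identification of the primitive form $\zeta$ --- is genuinely delicate. This is where the bulk of the work in \cite{HLSW, HPSV} resides, and where the reduction to atomic summands and the Berglund--H\"ubsch transpose relation $E_{W^T}=(E_W)^{T}$ do the heavy lifting.
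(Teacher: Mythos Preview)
Your outline is correct and matches the strategy the paper attributes to \cite{HLSW, HPSV}: identify the conformal Frobenius manifold of the A-model $(W,G_W)$ with Saito's Frobenius manifold of $W^T$ (for a suitable primitive form $\zeta$), then invoke Givental--Teleman to lift the genus-zero identification to all genera. The paper does not actually prove this theorem --- it is quoted from the cited references, with only a one-sentence summary of the method (see the paragraph following the statement) --- so there is nothing further to compare at the level of details.

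One small correction on the role of the weight-$\tfrac12$ chain variable hypothesis in part (2). You describe it as ensuring that the analytically constructed FJRW CohFT agrees with the algebraic one. The paper's explanation is different: the condition is ``caused by the lack of enough computation tools for the analytic part of FJRW theory'' --- in other words, when such a variable is present one cannot at present compute the finitely many initial FJRW correlators needed to run the reconstruction/WDVV argument, not that FJRW and PV are known to disagree. The paper also notes that the condition can be removed in some exceptional cases (see \cite[Section 2.3]{LLSS}).
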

Here  $\mathscr{A}^{\rm SG}_{W^T, \zeta}$ is the Saito-Givental limit potential for $W^T$, see \cite{HLSW} for the details. 
The main ingredients in the proof in \cite{HLSW} and \cite{HPSV} are the identification between the Frobenius manifold of the admissible LG A-model $(W, G_W)$ and the Saito's Frobenius manifold of the isolated singularity $W^T$ \cite{Sai}, with an appropriate choice of a primitive form $\zeta$ \cite{LLS, LLSS}..

The condition in Part (2) is not essential.
It is caused by the lack of enough computation tools for the analytic part of FJRW theory.
The condition can be removed in some cases of exceptional singularities, when $W=x^2+xy^2+yz^4$ and $W=x^2+xy^3+yz^3$ \cite[Section 2.3]{LLSS}.\\

\subsubsection{A proof of Theorem \ref{thm-max}}
It is well known that Saito's Frobenius manifold of $W^T$ is generically semisimple.
Now Theorem \ref{thm-max}  follows from Theorem \ref{thm-HPSV} and Givental Theorem \ref{semi-simple-Virasoro}.
\qed


\subsection{Semisimplicity via quantum Euler vector field}

\label{sec-nonmax}

In this section, we consider some A-model LG pairs $(W,\<J\>)$ studied in \cite{Francis}, where the group $\<J\>$ is not the maximal group $G_W$.
We consider the following list
\begin{equation}
\label{2-variable-cases}
x^4+y^4, x^3+y^6, x^3+y^9, x^4+y^6, x^3+xy^8.
\end{equation}
The main result of this section is
\begin{proposition}
\label{two-fermat-semisimple}
Let $W$ be an invertible polynomial in \eqref{2-variable-cases}.
Then the FJRW theory of the admissible LG pair  $(W, \<J_W\>)$ has the following properties:
\begin{enumerate}
\item
the Frobenius manifold is generically semisimple near the origin of $\cH_{W, \<J\>}$;
\item
the Virasoro Conjecture \ref{VC} holds true.
\end{enumerate}
\end{proposition}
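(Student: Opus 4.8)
The plan is to deduce part (2) from part (1) and then to establish (1) by a direct computation with the quantum Euler field. Granting that the Frobenius manifold of the FJRW theory of $(W,\langle J_W\rangle)$ is generically semisimple, part (2) follows quickly: by Lemma \ref{bigrad-2v} one has $\mu_a^+=\mu_a^-$ for all $\phi_a$, so the $\rho$-term in the Euler vector field \eqref{euler-givental} vanishes and the operators $L_k^{\rm Giv}$ of Theorem \ref{semi-simple-Virasoro} coincide with the $L_k$ of \eqref{virasoro-operator}. Givental's Theorem \ref{semi-simple-Virasoro} then gives $L_k\mathcal{D}(\hbar;\mathbf q)=0$ for the total descendent potential of this semisimple Frobenius manifold, and by the Givental--Teleman Theorem \ref{GT} together with the $t\to 0$ limit statement of \cite{Mil} (recorded in the remarks after \eqref{formal-fjrw}) this descendent potential agrees with $\mathcal{A}^{\rm FJRW}_{W,\langle J_W\rangle}$ up to a constant. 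Hence $L_k\mathcal{A}^{\rm FJRW}_{W,\langle J_W\rangle}=0$ for all $k\geq -1$, which is Conjecture \ref{VC} for this pair.

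For (1) I would invoke the criterion of Dubrovin \cite{Du}: a conformal Frobenius manifold $M$ of dimension $2d-2$ is semisimple at $t$ provided the operator of quantum multiplication by the Euler vector field, $\mathcal{U}_t:=E\star_t\in\mathrm{End}(T_tM)$, has $2d-2$ pairwise distinct eigenvalues (these eigenvalues are then the canonical coordinates). The characteristic polynomial of $\mathcal{U}_t$ has coefficients analytic in $t$, so its discriminant is an analytic function on $M$; it therefore suffices to exhibit a single point $t_\ast$ at which $\mathcal{U}_{t_\ast}$ has simple spectrum. One must move off the origin, since $E$ vanishes there and $\mathcal{U}_0=0$.

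The core of the argument is the computation of $\mathcal{U}_{t_\ast}$. The relevant genus-zero three-point invariants of $(W,\langle J_W\rangle)$ are dictated by the degree constraint \eqref{deg-fjrw} and the $\Z_d$-grading on $\cH_{W,\langle J_W\rangle}$: with $\deg_\C\alpha_i=2(i-1)/d$ and $\deg_\C\beta_j=1-2/d=\widehat{c}_W/2$, one finds that $\LD\tau_0(\alpha_i)\tau_0(\alpha_j)\tau_0(\alpha_k)\RD_{0,3}\neq 0$ forces $i+j+k=d+1$; that $\LD\tau_0(\alpha_i)\tau_0(\beta_j)\tau_0(\beta_k)\RD_{0,3}$ is nonzero only for $\alpha_i=\one$, where it is the pairing; that $\LD\tau_0(\beta_i)\tau_0(\beta_j)\tau_0(\beta_k)\RD_{0,3}=0$ for $d>2$; and that narrow--narrow--broad three-point functions occur only when $d$ is even, with $i+j=d/2+1$. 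Thus the small quantum algebra of the narrow part is the nilpotent $A_{d-1}$-algebra $\C[\xi]/(\xi^{d-1})$ (up to rescaling the $\alpha_i$, and with corrections into $\cH_{\rm bro}$ when $d$ is even), while $\cH_{\rm bro}$ multiplies into $\C\cdot\one$. I would then switch on the narrow parameters $t^{\alpha_2},\dots,t^{\alpha_{d-1}}$ and, when $d$ is even, suitable broad parameters $t^{\beta_j}$, and compute $\mathcal{U}_t$ at such a point: on the narrow block it becomes the multiplication operator of the miniversal deformation of the $A_{d-1}$ singularity, which is regular for generic parameters (Morsification of $A_{d-1}$); the splitting $\cH_{\rm nar}\oplus\cH_{\rm bro}$ makes $\mathcal{U}_t$ block-triangular, and one checks that the broad block contributes $d-1$ further eigenvalues. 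Choosing the parameters so that the narrow and broad spectra are disjoint yields a point $t_\ast$ with $2d-2$ distinct eigenvalues, which proves (1); the small cases $d=2,3$ can be checked by hand.

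The main obstacle is the broad sector. FJRW invariants with broad insertions are not reachable by the concavity and index-zero methods that compute the narrow ones, so the broad genus-zero data must either be taken from the available two-variable computations in the literature or be pinned down indirectly via the selection rules above together with the WDVV and string/dilaton equations; and one must rule out that the $d-1$ eigenvalues from the broad block coincide identically with those from the narrow block. What makes this tractable is that the degree and $\Z_d$ constraints cut the set of possibly nonzero broad three-point invariants down to a small, explicit family, so the eigenvalue count reduces to verifying a single nonvanishing discriminant.
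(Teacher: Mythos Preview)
Your deduction of (2) from (1) matches the paper's.

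For (1) the paper takes a more direct route via a different semisimplicity criterion. Rather than asking that the conformal Euler field $E\star_t$ have simple spectrum, the paper uses Abrams's criterion \cite[Theorem~3.4]{Abr}: the Frobenius algebra at ${\bf v}$ is semisimple if and only if the \emph{quantum Euler class} ${\bf E}({\bf v})=\sum_a\phi_a\star_{\bf v}\phi^a$ satisfies $\det\big({\bf E}({\bf v})\star_{\bf v}\big)\neq 0$. This lets one restrict to a single direction ${\bf v}=t\alpha_2$. The whole computation is then controlled by one narrow $5$-point invariant $C_d=\langle\alpha_{d-1},\alpha_{d-2},\alpha_2,\alpha_2,\alpha_2\rangle_{0,5}$, nonzero by concavity; the needed broad $5$-point invariant $\langle\beta_j,\beta^j,\alpha_{d-1},\alpha_2,\alpha_2\rangle_{0,5}$ is extracted from $C_d$ via a WDVV equation (it equals $-C_d$), so no direct broad computation is ever performed. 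The matrix of ${\bf E}(t)\star_t$ in the basis $\{\alpha_i,\beta_j\}$ is explicitly block-diagonal with determinant $(-d)^d(d-2)^{d-2}C_d^{2d-2}t^{4d-4}\neq 0$.

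Your proposal, by contrast, does not close. You flag the broad sector as ``the main obstacle'' and give no mechanism for computing the $d-1$ broad eigenvalues of $\mathcal U_t$ or for separating them from the narrow ones; the paper's WDVV trick is exactly what resolves this. There is also an error in your selection-rule analysis: invariants with a single broad insertion vanish for \emph{all} $d$ by $G_W$-invariance of the virtual class (Lemma~\ref{two-broad-calculation}(1)), so there are no nonzero narrow--narrow--broad three-point functions even when $d$ is even. Finally, the claim that the narrow block of $\mathcal U_t$ coincides with the multiplication operator of the $A$-type miniversal deformation is asserted, not proved: the quantum corrections along $\alpha_2$ are governed by the genuinely new $5$-point number $C_d$, specific to this FJRW theory and not a priori the Saito data of a one-variable singularity.
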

We see part (2) is Proposition \ref{general-pillow}.
It follows from part (1) and Givental Theorem \ref{semi-simple-Virasoro}.
In the remaining part of this section, we prove part (1) using the criteria described in \cite{Abr}.
\begin{remark}
Our method works in much more generality for invertible polynomials with two-variables.
In general, the formulas involved will be more complicated, even for the other two examples $x^3+xy^6$ and $x^3y+y^7$ in \cite{Francis}.
\end{remark}

\subsubsection{Quantum product $\star_t$}
Let $\mu=\dim_\C\cH_{W, \<J\>}$.
We fix a basis of $\cH_{W, \<J\>}$, denoted by $\{\phi_i\mid i=1, \ldots, \mu\}$.
We write the dual basis by $\{\phi^i\mid i=1, \ldots, \mu\}$ where $\<\phi_i, \phi^j\>=\delta_i^j.$
If $J^k\in \<J\>$ is a narrow element, the generator $1\vert J^k\>\in \cH_{J^k}$ is also denoted by the symbol $e_{kJ}$ in \cite{Francis}, where $e_0=dxdy$.
We fix a basis of $\cH_{W, \<J\>}$ as follows. 
The notations here is from \cite{Francis}, except we don't use $W$ to avoid confusion.
We always have $\phi_1=\one=e_{J}, \phi_2=e_{(d-1)J}, \phi_3=e_{2J}.$

%


\begin{table}[H]
\label{basis-two-variable}
\caption{A table of basis of $\cH_{W, \<J\>}$}
\

\centering

\begin{tabular}{|c|c|| l |}
  \hline
$W$ &$\mu$ & $\phi_1, \ldots, \phi_\mu$\\
\hline
$x^4+y^4$ & $6$ & $\one, e_{3J}, e_{2J}, 4y^2e_0=4X, 4xye_0=4Y, 4x^2e_0=4Z$\\
  \hline
$x^3+y^6$ & $6$ & $\one, e_{5J}, e_{2J}, Z, 3X, 6Y$\\
\hline
$x^3+y^9$ & $8$ & $\one, X^2Y, Y, X, XY, X^2, 3Z, 9xy^2e_0$\\ \hline
$x^4+y^6$ & $9$ & $\one, XY^2, Y, Y^2, X^2, Z, X, XY, \sqrt{24}xy^2e_0$\\ \hline
$x^3+xy^8$ & $10$ & $\one, X^3Y, Y, X, XY, X^2, X^2Y, X^3, \sqrt{24}xy^3e_0, \sqrt{-8} Z$\\ \hline
\end{tabular}
\end{table}

Let us denote the dimension of the narrow subspace of $\cH_{W, \<J\>}$ by $\mu_{\rm nar}$, the weights of the variables by $({w_1\over d}, {w_2\over d})$, where $d, w_1, w_2$ are positive integers and $\gcd(w_1, w_2)=1$.

We parametrize $e_{2J}$ by $t$ and compute the quantum product $\star_{t}$.
We give an example to compute $e_{(d-1)J}\star_t$. It is enough to find all nontrivial invariants of the form
\begin{equation}
\label{7-point}
\<e_{(d-1)J}, \phi_i,\phi_j, e_{2J}, \ldots, e_{2J}\>_{0,n+3}.
\end{equation}
Since
$$\deg_\C e_{(d-1)J}=2-2{w_1+w_2\over d}=\widehat{c}_W, \quad \deg_\C e_{2J}={w_1+w_2\over d}.$$
The degree constraint \eqref{deg-fjrw} implies if the invariant in \eqref{7-point} does not vanish, then
$$\deg_\C\phi_i+\deg_\C\phi_j={n\over 2}\widehat{c}_W.$$
Since $0\leq \deg_\C\phi_i, \deg_\C\phi_j \leq \widehat{c}_W$, we must have $n=0,1,2,3,4.$
In fact, if $n=1$ or $3$, there is no nontrivial invariant of the form \eqref{7-point}.
Now we list all possible nontrivial contributions as follows.
Some of the FJRW invariants (including the $7$-point invariant) are obtained by WDVV equations.
\begin{enumerate}
\item if $n=0$, then $\<e_{(d-1)J}, \one, \one\>_{0,3}=1$;
\item if $n=2$, then
$$\<e_{(d-1)J}, \phi_i, \phi^i, e_{2J}, e_{2J}\>_{0,5}=
\begin{dcases}
0, & i=1, 2;\\
{w_1w_2\over d^2}, & 3\leq i \leq \mu_{\rm nar};\\
-{w_1w_2\over d^2}, & i>\mu_{\rm nar}.
\end{dcases};$$
\item if $n=4$, then by WDVV equation, we have
\begin{eqnarray*}
&&\<e_{(d-1)J}, e_{(d-1)J}, e_{(d-1)J}, e_{2J}, e_{2J}, e_{2J}, e_{2J}\>_{0,7}\\
&=&{4\choose 2} \left(\<e_{(d-1)J}, \phi_i, \phi^i, e_{2J}, e_{2J}\>_{0,5}\right)^2\\
&=&{6w_1^2w_2^2\over d^4}.
\end{eqnarray*}
\end{enumerate}

Let ${\rm Id}_i$ be the identity operator on $\C\cdot\phi_i$.
Then the multiplication of $e_{(d-1)J}\star_t$ is given by the matrix
\begin{equation}
\label{multiply-top}
\begin{pmatrix}
0 & {w_1^2w_2^2\over 4d^4}t^4\\
1 & 0
\end{pmatrix}
\bigoplus \left(\bigoplus_{i=3}^{\mu_{\rm nar}} {w_1w_2 \over 2d^2}t^2{\rm Id}_i \right)\bigoplus\left(\bigoplus_{i=\mu_{\rm nar}+1}^{\mu} -{w_1w_2\over 2d^2}t^2{\rm Id}_i\right).
\end{equation}

\subsubsection{A quantum Euler vector field}
Let ${\bf v}=\sum_it_i\phi_i$, we consider the {\em  quantum Euler vector field}
\begin{equation}
{\bf E}({\bf v}):=\sum_{i=1}^{\mu}\phi_i\star_{\bf v}\phi^i.
\end{equation}
Let us calculate the restriction ${\bf E}(t):={\bf E}(t\cdot e_{2J})$. We give an example when the polynomial is $x^4+y^4.$
 Using the FJRW invariants listed in \cite[Page 1365]{Francis}, we obtain
\begin{eqnarray*}
{\bf E}(t)&=&\sum_{i=1}^{6}\LL\phi_i,\phi^i,\one\RR_{0,3}(t)\cdot e_{(d-1)J}
+\left(\LL  e_{2J}, e_{2J}, e_{3J}\RR_{0,3}(t)+\LL 4X, 4Z, e_{3J}\RR_{0,3}(t)\right.\\
&&\left.+\LL 4Y, 4Y, e_{3J}\RR_{0,3}(t)+\LL 4Z, 4X, e_{3J}\RR_{0,3}(t)\right)\cdot\one\\
&=&6 e_{3J}-{t^2\over 16}\one.
\end{eqnarray*}
The calculation for other cases are similar.
\begin{lemma}
Let $W$ be an invertible polynomial in \eqref{2-variable-cases}.
The quantum Euler vector filed of the FJRW theory of $(W, \<J\>)$ along $e_{2J}$ has the form
\begin{equation}
\label{euler-formula}
{\bf E}(t)=\mu \cdot e_{(d-1)J}+(2\mu_{\rm nar}-2-\mu){w_1w_2 t^2\over 2 d^2}\one.
\end{equation}
\end{lemma}
The proof of this Lemma is tedious but direct, only using the FJRW invariants listed in \cite{Francis} and WDVV equations.
Let us remark that if $W$ is of Fermat type, neither 4-point invariants nor 6-point invariants has nontrivial contributions.
On the other hand, for $x^3+xy^8$, both $\<Y,Z,Z,Z\>$ and $\<Y,Z,xy^3e_0,xy^3e_0\>$ has nontrivial contribution.
However, they cancel with each other when we compute ${\bf E}(t)$ because $$\LL\sqrt{-8}Z, \sqrt{-8}Z, \sqrt{-8}Z\RR_{0,3}(t)+\LL\sqrt{24}xy^3e_0, \sqrt{24}xy^3e_0, \sqrt{-8}Z\RR_{0,3}(t)=0.$$

\subsubsection{A proof of Proposition \ref{two-fermat-semisimple}, Part (1):}

According to \cite[Theorem 3.4]{Abr}, the Frobenius manifold is semi-simple at ${\bf v}$ if and only if
$$\det {\bf E}({\bf v})\star_{\bf v}\neq 0.$$
In fact, we just need to restrict the multiplication to ${\bf v}=t\cdot e_{2J}$.
We see the quantum multiplication of second term in \eqref{euler-formula} on the fixed basis gives a multiple of identity matrix.
Using \eqref{multiply-top} and \eqref{euler-formula}, the multiplication ${\bf E}(t)\star_t$ on the basis
$\{\phi_1=\one, \phi_2=e_{(d-1)J}, \phi_3, \cdots, \phi_{\mu}\}$
is given by the matrix
$$M \bigoplus\left( \bigoplus_{i=3}^{\mu_{\rm nar}} {(\mu_{\rm nar}-1)w_1w_2\over d^2}t^2{\rm Id}_i\right) \bigoplus \left(\bigoplus_{i=\mu_{\rm nar}+1}^{\mu} -{(\mu_{\rm nar}-1-\mu)w_1w_2\over d^2}t^2{\rm Id}_i\right),$$
where
$$M=\begin{pmatrix}
{(2\mu_{\rm nar}-2-\mu)w_1w_2\over 2 d^2}t^2 & {w_1^2w_2^2\mu\over 4d^4}t^4\\
\mu & {(2\mu_{\rm nar}-2-\mu)w_1w_2\over 2 d^2}t^2
\end{pmatrix}.$$
Thus we have
$
\det {\bf E}(t)\star_t
\neq0
$
if $t\neq0$.
Thus $\det {\bf E}({\bf v})\star_{\bf v}\neq 0$ in a small neighborhood of ${\bf v}=(0, 0, t, 0, \cdots, 0)$ and the result follows
from  \cite[Theorem 3.4]{Abr}.
\qed

\section{Virasoro constraints and LG/CY correspondence}
\label{sec-calabi-yau}
In this section, we consider the Virasoro constraints for admissible LG pairs $(W, G)$ when $W$ is of Calabi-Yau type.
\begin{definition}
[Calabi-Yau polynomials]
We say a quasihomogeneous polynomial $W(x_1, \cdots, x_n)$ is of {\em Calabi-Yau type} if
\begin{equation}
\label{cy-condition}
\widehat{c}_W:=\sum_{i=1}^{n}(1-2q_i)
=n-2\in \mathbb{Z}_+.
\end{equation}
\end{definition}
Let $TX_W$ be the tangent bundle of the hypersurface
$$X_W:=(W=0)\subset \mathbb{P}^{n-1}(w_1, \cdots, w_n).$$
The condition \eqref{cy-condition} implies that $c_1(TX_W)=0$.
On the other hand, the finite group $\widetilde{G}:=G/\<J_W\>$ acts on the hypersurface $X_W$ and the global quotient
$$X_{W,G}:= X_W/\widetilde{G}$$ also satisfies $c_1(TX_{W,G})=0$.
We will abuse the notations to call these varieties $X_{W,G}$ of Calabi-Yau type.

Let $(W, G)$ be an admissible LG pair of CY type. The LG A-model theories of $(W, G)$ are closely related to the Gromov-Witten theory for the Calabi-Yau variety $X_{W,G}$.
Such connections are called {\em Landau-Ginzburg/Calabi-Yau correspondence} in the literature \cite{CR}, inspired by the work of physicists (see \cite{Wit} for the references therein).

In general, LG pairs of CY type are not generically semi-simple so Givental's Theorem is no longer applicable. However, the idea of LG/CY correspondence is very useful here.

If the Calabi-Yau $X_{W,G}$ has complex dimension at least three, (or $\widehat{c}_W\geq 3$ equivalently), then Virasoro Conjecture \ref{VC} can be obtained by a simple degree calculation.
We show it in Section \ref{sec-central-large}.
When the Calabi-Yau side $X_{W,G}$ is an elliptic curve, we can prove Virasoro Conjecture \ref{VC} using the Virasoro constraints for the elliptic curve in \cite{OP} and the Landau-Ginzburg/Calabi-Yau correspondence studied in \cite{LLSZ, LSZ}.
In Section \ref{sec-simple-elliptic}, we discuss the cases for all the pairs $(W, G)$ when $X_{W,G}$ is one-dimensional Calabi-Yau.

\subsection{Calabi-Yau polynomials with large central charges}
\label{sec-central-large}
For admissible LG pairs of CY type, if the central charge is at least three, as an analog of the Virasoro constraints for Calabi-Yau manifolds that has been proved in \cite[Theorem 7.1]{Ge}, we have
\begin{lemma}\label{thm-lc}
Let $(W, G)$ be an admissible LG pair of Calabi-Yau type with $\widehat{c}_W\geq 3$.
If for any nonzero $\phi_a=\alpha|\gamma\>\in \cH_{W, G}$ with
$\gamma\neq J$, we have $\deg_\C\phi_a\geq 1$,
then the Virasoro constraints
$$L_k\cA_{W, G}^{\clubsuit}=0, k\geq -1$$
is equivalent to Assumption \ref{alge cycle}.
	\end{lemma}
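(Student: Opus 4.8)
\textbf{Proof strategy for Lemma \ref{thm-lc}.}
The plan is to reduce the statement $L_k\cA_{W,G}^{\clubsuit}=0$ to Assumption \ref{alge cycle} (for $L_0$) and to a grading/degree analysis (for $L_k$, $k\geq 1$), using that $L_{-1}$ is already the string equation. First I would recall that $L_{-1}\cA_{W,G}^{\clubsuit}=0$ holds unconditionally by the string equation, as recorded in Section \ref{string}, and that $L_0\cA_{W,G}^{\clubsuit}=0$ follows once we verify Assumption \ref{alge cycle}. So the real content is to check Assumption \ref{alge cycle}, i.e.\ that a nonvanishing ancestor invariant $\LD\prod_{i=1}^{k}\tau_{\ell_i}(\phi_{i})\RD_{g,k}^{\clubsuit,W,G}$ forces
$$\sum_{i=1}^k\Big(\mu_i^{\pm}+{\widehat{c}_W\over 2}+\ell_i\Big)=(3-\widehat{c}_W)(g-1)+k,$$
and then separately to handle the higher Virasoro operators $L_k$, $k\geq 1$, by showing the coefficients $(\mu_a^++m+\tfrac12)_{k+1}$ appearing in \eqref{virasoro-operator} are compatible with this degree constraint in the way \cite[Theorem 7.1]{Ge} exploits.

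The key steps, in order, are as follows. \textbf{Step 1.} Use the FJRW degree constraint \eqref{deg-fjrw}, $\deg_\C[\cW_{g,\gamma}^G]^{\rm vir,FJRW}=(3-\widehat{c}_W)(g-1)+n-\sum_i\iota_{\gamma_i}$, to pin down the complex degree of the CohFT class $\Lambda^{\clubsuit,W,G}_{g,k}(\phi_1\otimes\cdots\otimes\phi_k)$, hence the constraint $\sum_i\deg_\C\phi_i+\sum_i\ell_i=(3-\widehat{c}_W)(g-1)+k+\dim_\C\overline{\cM}_{g,k}$-type relation on nonvanishing invariants. \textbf{Step 2.} Observe that because $\widehat{c}_W\geq 3$ and $\widehat{c}_W\in\Z$, and because $\deg_\C(1|J\>)=0$ while every other $\gamma\neq J$ contributes $\deg_\C\phi_a\geq 1$ by hypothesis, the degrees are constrained enough that the ``wrong-degree'' terms in $L_k$ cannot occur; the hypothesis $\deg_\C\phi_a\geq 1$ for $\gamma\neq J$ plays exactly the role that ``$H^{0,0}$ and $H^{p,p}$ with $p\geq 1$'' plays in the Gromov-Witten argument of \cite{Ge}. \textbf{Step 3.} With Assumption \ref{alge cycle} verified, invoke the proposition after it to conclude $L_0\cA_{W,G}^{\clubsuit}=0$. \textbf{Step 4.} For $k\geq 1$, follow the template of \cite[Theorem 7.1]{Ge}: write $L_k$ in the quantization form of Proposition \ref{Virasoro-quantization}, and check termwise that each monomial in $L_k\cA_{W,G}^{\clubsuit}$, when expanded against nonvanishing invariants, has a Pochhammer coefficient $(\mu_a^++m+\tfrac12)_{k+1}$ whose factors interact with the integrality of the degree constraint to produce cancellation in pairs (the dilaton-shift term against the $t^a_m\partial$-term, and the $\eta^{ab}\partial\partial$-term against contributions from the degree-split correlators), exactly as in the semisimple-free argument for Calabi-Yau targets.

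The main obstacle I expect is \textbf{Step 4}: making the termwise cancellation in $L_k\cA_{W,G}^{\clubsuit}=0$ for $k\geq 1$ rigorous. In the Gromov-Witten setting \cite[Theorem 7.1]{Ge} this works because the Hodge degrees $\mu_a^+$ lie in a narrow window ($|\mu_a^+|\leq \widehat{c}_W/2$, with equality only for the fundamental and top classes) and the relevant Pochhammer products vanish or match up by the Hodge-degree constraint; here one must confirm that the FJRW bigrading \eqref{bigrading}, together with the hypothesis on $\gamma\neq J$ sectors and $\widehat{c}_W\geq 3$, produces the same rigidity — in particular that no broad insertion or fractional degree shift $\iota_\gamma$ spoils the integrality needed for the Pochhammer symbols to behave. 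A secondary subtlety is bookkeeping the signs $(-1)^{|\phi_a|}$ and the parity conventions from Section \ref{sec-givental}, but that is routine once the degree argument is set up. I would therefore spend most of the writeup carefully stating the degree constraint in the FJRW normalization and then citing \cite[Theorem 7.1]{Ge} for the purely formal manipulation, noting that the input hypotheses of that theorem are met here.
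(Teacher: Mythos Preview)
Your plan is exactly the route the paper intends: the paper does not prove Lemma~\ref{thm-lc} but simply asserts it as the LG analog of \cite[Theorem~7.1]{Ge}, so transplanting Getzler's argument is the right idea and there is no ``paper's own proof'' to compare against beyond that citation.

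There is, however, one genuine gap in your outline. In Step~3 you write ``with Assumption~\ref{alge cycle} verified,'' but Steps~1--2 only give you the FJRW dimension constraint \eqref{deg-fjrw}, which is a condition on $\sum_i(\ell_i+\deg_\C\phi_i)$, i.e.\ on the average $\tfrac12(\mu_a^++\mu_a^-)$. Assumption~\ref{alge cycle} --- and the Virasoro operators themselves --- are formulated in terms of $\mu_a^+$ alone. These agree only when $\mu_a^+=\mu_a^-$, which fails for broad insertions with ${\rm wt}(\alpha)\neq N_\gamma/2$. In Gromov--Witten theory this gap is closed by algebraicity of the virtual class (Remark~\ref{KL bigrad}); in the LG setting Assumption~\ref{alge cycle} is an \emph{additional} input, not a consequence of the hypothesis $\deg_\C\phi_a\geq 1$. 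The paper acknowledges this just before its $L_0$ proposition, asserting without proof that Assumption~\ref{alge cycle} holds for all examples in Theorem~\ref{thm-cy}. So for the intended application (Corollary~\ref{virasoro-high CY}) you are fine, but you should state clearly that Assumption~\ref{alge cycle} is being \emph{used}, not derived from the degree hypothesis.

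Your worry in Step~4 about fractional $\iota_\gamma$ spoiling the Pochhammer cancellation is, by contrast, less serious than you fear. Once the $\mu^+$-constraint (Assumption~\ref{alge cycle}) is in place, Getzler's manipulation is a formal identity that only uses $\sum_i(\mu_i^++\ell_i+\tfrac12)=\text{const}$ for nonvanishing correlators, together with the hypothesis forcing each summand (after stripping $\tau_0(\one)$ via string) to be $\geq 0$; no half-integrality of $\mu_a^+$ is required.
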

Now we deduce the second part of Theorem \ref{thm-cy} from this Lemma.
\begin{corollary}\label{virasoro-high CY}
	Let $W$ be an invertible polynomial of Calabi-Yau type. If $\widehat{c}_{W}\geq 3$ and $G\leq {\rm SL}_n(\C)$, then Virasoro Conjecture \ref{VC} holds for $\mathcal{A}_{W,G}^{\rm FJRW}$.\end{corollary}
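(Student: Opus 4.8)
The plan is to verify the hypothesis of Lemma~\ref{thm-lc} for the pair $(W,G)$ at hand; granting that, the conclusion $L_k\cA_{W,G}^{\clubsuit}=0$ for all $k\ge -1$ is immediate. So I must check that every nonzero homogeneous element $\phi_a=\alpha|\gamma\>\in\cH_{W,G}$ with $\gamma\ne J$ satisfies $\deg_\C\phi_a\ge 1$.

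First I would record a degree formula. Adding the two lines of \eqref{bigrading} gives $\mu_a^++\mu_a^-=N_\gamma+2\iota_\gamma-\widehat c_W$, so by \eqref{complex-degree} one has $\deg_\C(\alpha|\gamma\>)=\tfrac{N_\gamma}{2}+\iota_\gamma$; and since the Calabi--Yau condition $\widehat c_W=n-2$ forces $\sum_i q_i=1$, we get $\iota_\gamma={\rm age}(\gamma)-1$, whence
$$\deg_\C(\alpha|\gamma\>)=\frac{N_\gamma}{2}+{\rm age}(\gamma)-1,$$
a quantity depending only on $\gamma$. Now $\widehat c_W\ge 3$ means $n\ge 5$, and $G\le{\rm SL}_n(\C)$ means ${\rm age}(\gamma)=\sum_i\theta_i\in\Z_{\ge0}$ for every $\gamma\in G$. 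If ${\rm age}(\gamma)=0$ then all $\theta_i=0$, i.e.\ $\gamma={\rm Id}$, and $\deg_\C=\tfrac n2-1\ge\tfrac32$. For ${\rm age}(\gamma)\ge 1$ the bound $\deg_\C\ge 1$ is equivalent to $N_\gamma+2\,{\rm age}(\gamma)\ge 4$, which can fail only if ${\rm age}(\gamma)=1$ and $N_\gamma\in\{0,1\}$; these two sub-cases are what remains.

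For $N_\gamma=1$ I would show the sector $\cH_\gamma$ is already zero. Since $W$ is invertible, if ${\rm Fix}(\gamma)$ is the $x_i$-axis then the only monomials of $W$ that survive the restriction are powers of $x_i$, so $W_\gamma=c\,x_i^{a_i}$ with $a_i=1/q_i\ge 2$; a monomial representative of a class in $\cH_\gamma$ is $x_i^m\,dx_i$ with $0\le m\le a_i-2$, and the grading element $J\in G$ acts on it by $\exp\!\big(2\pi\sqrt{-1}(m+1)q_i\big)\ne 1$, so there is no nonzero $G$-invariant class. For the remaining case---$N_\gamma=0$, ${\rm age}(\gamma)=1$, where $\cH_\gamma=\C\cdot|\gamma\>$ and $\deg_\C=0$---I would prove $\gamma=J$, contradicting $\gamma\ne J$. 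Put $\boldsymbol\theta=(\theta_1,\dots,\theta_n)\in(0,1)^n$. Invariance of $W$ under $\gamma$ is equivalent to $E_W\boldsymbol\theta\in\Z^n$; since the exponent matrix $E_W$ is invertible with non-negative integer entries, it has no zero row, so each component of $\mathbf v:=E_W\boldsymbol\theta$ is a strictly positive integer, i.e.\ $\mathbf v\ge\mathbf 1$ componentwise. Let $\mathbf q^{*}:=(E_W^{\,T})^{-1}\mathbf 1$ be the weight vector of the Berglund--H\"ubsch transpose $W^T$; transposition preserves the central charge, so $W^T$ is again Calabi--Yau and $\sum_j q_j^{*}=1$. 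Then
$$1={\rm age}(\gamma)=\mathbf 1^{\,T}\boldsymbol\theta=(\mathbf q^{*\,T}E_W)\,\boldsymbol\theta=\mathbf q^{*\,T}\mathbf v\ \ge\ \mathbf q^{*\,T}\mathbf 1=1,$$
and since every $q_j^{*}>0$, equality forces $\mathbf v=\mathbf 1$; hence $\boldsymbol\theta=E_W^{-1}\mathbf 1=\mathbf q$, that is, $\gamma=J$.

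Putting these together, the hypothesis of Lemma~\ref{thm-lc} is satisfied, and the Corollary follows. Everything except the last paragraph is routine bookkeeping; the part I expect to take the most care is precisely the claim that $J$ is the unique narrow sector of age one. That is the step where invertibility of $W$ genuinely enters---through $E_W$ and its transpose---and it is the analogue here of the connectedness input underlying \cite[Theorem~7.1]{Ge}.
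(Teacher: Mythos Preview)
Your proof is correct and follows essentially the same approach as the paper's: the same degree formula $\deg_\C(\alpha|\gamma\>)=N_\gamma/2+{\rm age}(\gamma)-1$, the same reduction to the two borderline cases $({\rm age}(\gamma),N_\gamma)\in\{(1,0),(1,1)\}$, and the same key argument that $N_\gamma=0$, ${\rm age}(\gamma)=1$ forces $\gamma=J$ via the transpose weights and the equality $\sum q_i^T=1$. Your treatment of the $N_\gamma=1$ case is slightly more explicit than the paper's (which simply asserts there is no $\langle J\rangle$-invariant element), but the underlying reasoning is identical.
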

	\begin{proof}
For a fixed homogeneous element $\phi_a=\alpha|\gamma\>\in \cH_{W, G}$ with
$$\gamma=\left(\exp(2\pi\sqrt{-1}\theta_1), \cdots, \exp(2\pi\sqrt{-1}\theta_n)\right)$$
the group element given in \eqref{group-exponent}.
Recall that ${\rm age}(\gamma)=\sum\limits_{i=1}^{n}\theta_i$
is defined \eqref{degree-shift}.
Then $G\leq {\rm SL}_n(\C)$ implies that ${\rm age}(\gamma)\in \mathbb{Z}_{\geq0}$.
Using the bigrading formula \eqref{bigrading} and  the CY condition \eqref{cy-condition}, we have
\begin{align*}
\deg_\C\phi_a 
={N_\gamma\over 2}+{\rm age}(\gamma)-\sum_{i=1}^nq_i={N_\gamma\over 2}+{\rm age}(\gamma)-1.
\end{align*}

If $\gamma=J^0$, then ${\rm age}(\gamma)=0$ and $N_\gamma=n=2+\widehat{c}_W\geq 5$. So $\deg_\C\phi_a \geq 1$ holds.

If $\gamma\neq J^0$, then ${\rm age}(\gamma)\geq 1$. The condition $\deg_\C\phi_a \geq 1$ could fail if 
		\begin{enumerate}
			\item${\rm age}(\gamma)=1$ and $N_\gamma=0$;
			\item ${\rm age}(\gamma)=1$ and $N_\gamma=1$.
		\end{enumerate}	
		
For case (1), we claim that $\gamma=J$. In fact,
$N_\gamma=0$ implies  for each $i$, we have $0<\theta_i<1$. 
Recall $E_W$ is the exponent matrix of $W$, then $\gamma\in G_W$ implies
$$E_W
\begin{pmatrix}
\theta_1\\
\vdots\\
\theta_n
\end{pmatrix}\in \Z_+^n.$$
Let $q_i^T$ be the weight of $x_i$ in the mirror  polynomial $W^T$, then we have
$$
\begin{pmatrix}
q_1^T&\cdots&q_n^T
\end{pmatrix}
E_W
\begin{pmatrix}
\theta_1\\
\vdots\\
\theta_n
\end{pmatrix}
=
\sum_{i=1}^{n}\theta_i=1.$$
Since $q_i^T$ are positive numbers and
$$\sum_{i=1}^{n}q_i^T=\sum_{i=1}^{n}q_i=1,$$
we must have
$$E_W
\begin{pmatrix}
\theta_1\\
\vdots\\
\theta_n
\end{pmatrix}
=\begin{pmatrix}
1\\
\vdots\\
1
\end{pmatrix}.
$$
This implies $\theta_i=q_i$ for each $i$ and $\gamma=J$.

On the other hand, case (2) is impossible. In fact, $W_\gamma$ is still an invertible polynomial so we must have $W_\gamma=x_i^{a_i}$ for some $i$.
Then it is easy check that there is no $\<J\>$-invariant element in ${\rm Jac}(W_\gamma)dx_\gamma$. 

In conclusion, if $\gamma\neq J$, condition $\deg_\C\phi_a\geq 1$ always holds and the result follows from Lemma \ref{thm-lc}.
	\end{proof}

\subsection{Fermat CY polynomials of three variables}
\label{fermat-cubic}
Fermat CY polynomials with three variables can be written in the form of
\begin{equation}
\label{fermat-1-fold}
W_d:=x_1^{a_1}+x_2^{a_2}+x_3^{a_3}, \quad \textit{with}\quad {1\over a_1}+{1\over a_2}+{1\over a_3}=1.
\end{equation}
Here we assume $d=a_1\geq a_2\geq a_3$, then we have
$$(a_1, a_2, a_3)=(3,3,3), (4,4,2), \textit{or }  (6,3,2).$$
The FJRW theory of the admissible LG pairs
$$\Big(W_d:=x_1^{a_1}+x_2^{a_2}+x_3^{a_3}, \<J\>\Big)$$
has been studied in \cite{LSZ}.
When $d=3,4,6$, we choose
$$h(W_d):=x_1x_2x_3/27,\quad x_1^2x_2^2/32,\quad x_1^4x_2/36.$$
We form a homogeneous basis of $\cH_{W_d, \<J\>}$ by
$$\Big\{1|J\>, 1|J^2\>, d{\bf x}|J^0\>, h(W_d)d{\bx}|J^0\>\Big\}.$$
We parametrize them by $t_0^0, t_0^1, s_0^1,$ and $s_0^0$ respectively.
The choice of $h(W_d)$ is made to match the Poincar\'e pairing on the GW side.

According to \eqref{virasoro-operator}, the Virasoro operators for the pair $(W_d, \<J\>)$ are 
	\begin{align}
	L_k:=&-(k+1)!{\pop t_{k}^{0}}\nonumber\\
	&+\sum_{m\geq 0}^\infty\left((m)_{k+1}t^0_m\frac{\partial}{\partial t^0_{k+m}}+(m+1)_{k+1}t^1_m\frac{\partial}{\partial t^1_{k+m}}\right)\label{cubic-virasoro}\\
	&+\sum_{m\geq 0}^\infty\left((m+1)_{k+1}s^0_m\frac{\partial}{\partial s^0_{k+m}}+(m)_{k+1}s^1_m\frac{\partial}{\partial s^1_{k+m}}\right).\nonumber
	\end{align}

Recall $\mathcal{A}^{\rm FJRW}_{W,G}$ is the ancestor FJRW potential for the LG pair $(W,G)$ defined in \eqref{ancestor-qst}.
In this section, we will prove Part (2) of Theorem \ref{thm-cy}, which is
\begin{theorem}
\label{virasoro-cubic}
Let $W_d$ be a Fermat CY polynomial in \eqref{fermat-1-fold} and $L_k$ be the Virasoro operators in \eqref{cubic-virasoro}.
For all $k\geq -1$, we have Virasoro constraints
$$L_k \mathcal{A}^{\rm FJRW}_{W_d,\<J\>}=0.$$
\end{theorem}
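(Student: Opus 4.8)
The plan is to deduce Theorem~\ref{virasoro-cubic} from the Virasoro constraints for the Gromov--Witten theory of the elliptic curve \cite{OP}, transported through the all-genus Landau--Ginzburg/Calabi--Yau correspondence of \cite{LLSZ, LSZ}. For $d\in\{3,4,6\}$ write $\mathcal{E}_d:=X_{W_d,\<J\>}=(W_d=0)\subset\mathbb{P}^2(w_1,w_2,w_3)$; each $\mathcal{E}_d$ is a smooth elliptic curve, $H^*(\mathcal{E}_d,\C)$ is four-dimensional, and the mirror map of \cite{LSZ} identifies it with $\cH_{W_d,\<J\>}$ in a way that preserves the pairing (this is where the normalizations $h(W_d)$ of Section~\ref{fermat-cubic} are chosen) and the bigrading, sending $H^{p,q}(\mathcal{E}_d)$ to an element with $(\mu^+,\mu^-)=(p-\tfrac12,\,q-\tfrac12)$; in particular it identifies the Hodge grading operators $\theta$ of \eqref{hodge-op} on the two state spaces, as one checks against Example~\ref{bigrforell} in the cubic case. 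Since $\widehat{c}_{W_d}=n-2=1=\dim_\C\mathcal{E}_d$ and $\chi_{W_d,\<J\>}=0=\chi(\mathcal{E}_d)$, so that the $\delta_{0,k}$ supertrace constant ${\rm Str}(\theta^2-\tfrac14)$ vanishes on both sides by Proposition~\ref{conj-supertrace}, comparing \eqref{virasoro-operator}, equivalently \eqref{cubic-virasoro}, with the Gromov--Witten Virasoro operators of the elliptic curve shows that under this identification $L_k=L_k^{\mathrm{GW}}$ as differential operators for every $k\geq-1$.

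Next I would recall from \cite{LLSZ, LSZ} that, after expressing all correlators of both theories as quasi-modular forms in an appropriate modular parameter $\tau$, the two potentials are related by a \emph{holomorphic Cayley transformation}: a $\tau$-dependent upper-triangular symplectic transformation $\mathbb{T}$ on the Givental space $\mathbb{H}$ of the common state space, built from the quasi-modular Eisenstein series $E_2$, with
\[
\mathcal{A}^{\rm FJRW}_{W_d,\<J\>}=\widehat{\mathbb{T}}\,\mathcal{A}^{\rm GW}_{\mathcal{E}_d}
\]
up to a nonzero constant (after the relevant analytic continuation). Here I use item~(3) of the remark following \eqref{formal-fjrw} to pass between the ancestor potential used in this paper and the descendent potential of \cite{OP}, the $\widehat{S}$-conjugation relating them intertwining the two versions of the Virasoro operators up to a constant. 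Granting that $\widehat{\mathbb{T}}$ commutes with $L_k$, the theorem is immediate:
\[
L_k\,\mathcal{A}^{\rm FJRW}_{W_d,\<J\>}=L_k\,\widehat{\mathbb{T}}\,\mathcal{A}^{\rm GW}_{\mathcal{E}_d}=\widehat{\mathbb{T}}\,L_k\,\mathcal{A}^{\rm GW}_{\mathcal{E}_d}=\widehat{\mathbb{T}}\,L_k^{\mathrm{GW}}\,\mathcal{A}^{\rm GW}_{\mathcal{E}_d}=0,
\]
the last equality being \cite{OP}.

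The heart of the matter --- and the main obstacle --- is the commutation $[\widehat{\mathbb{T}},L_k]=0$. By Corollary~\ref{cocycle-commute} this splits into two parts. First, at the classical level $\mathcal{L}_k=z^{-1/2}D^{k+1}z^{-1/2}$ is built from $D=z^2\partial_z+z(\theta+1)$, so it is enough that $\mathbb{T}$ commute with $\theta$, equivalently with $D$; this is precisely the assertion that the holomorphic Cayley transformation is grading preserving, which is built into its construction from $E_2$, and gives $\mathbb{T}\,\mathcal{L}_k\,\mathbb{T}^{-1}=\mathcal{L}_k$ on $\mathbb{H}$. Second, one must check that the cocycle constant $C_{\mathbb{T}}(\mathcal{L}_k)$ appearing in Corollary~\ref{cocycle-commute} vanishes; since $\mathbb{T}$ is upper-triangular its quadratic Hamiltonian has no $q$-$q$ component, so the cocycle \eqref{cocycle} receives no contribution, and the only delicate case $k=0$ is controlled by the vanishing of the supertrace constants noted above. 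Turning the quasi-modularity estimates of \cite{LLSZ, LSZ} into this clean statement about symplectic transformations on $\mathbb{H}$, and verifying these two points (including the compatibility of the $\tau$-dependence of $\mathbb{T}$ with the flat coordinates on which $L_k$ acts), is the technical core of the argument.
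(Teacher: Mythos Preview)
Your overall strategy --- pull back the Okounkov--Pandharipande constraints \cite{OP} through the LG/CY correspondence of \cite{LSZ} --- is exactly the paper's, and your identification of the state spaces, bigradings, and Virasoro operators is correct. But you have misread what the holomorphic Cayley transformation $\mathscr{C}^{\rm hol}_{\tau_*}$ is, and this leads you to set up (and then struggle with) a technical problem that does not actually arise.

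The Cayley transformation of \cite{LSZ} is \emph{not} a quantized symplectic operator on the Givental loop space $\mathbb{H}$. It is an operator on the ring of quasi-modular forms: it acts on each ancestor correlation function $\LL\cdots\RR_{g,n}(q)$ as a function of the modular parameter $\tau$ (equivalently $q$), re-expanding it about the complex-multiplication point $\tau_*$ in the local coordinate $s$. Thus $\mathscr{C}^{\rm hol}_{\tau_*}$ touches only the $q$-dependence of the coefficients of the potential, whereas the Virasoro operators $L_k$ in \eqref{cubic-virasoro} are differential operators in the formal variables $t^a_m,s^a_m$ and do not involve $q$ at all. The commutation $L_k\,\mathscr{C}^{\rm hol}_{\tau_*}=\mathscr{C}^{\rm hol}_{\tau_*}\,L_k^{\sE}$ is therefore automatic --- the two operators act in independent slots --- and no cocycle computation, grading argument, or ``upper-triangular $\mathbb{T}$ built from $E_2$'' is needed. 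Your proposed ``technical core'' is chasing a phantom; and incidentally, an upper-triangular operator has $pp$-terms (not $qq$-terms) in its Hamiltonian, so your cocycle argument would not go through as written even on its own terms.

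What \emph{does} require the Givental formalism is the passage from the descendent potential of \cite{OP} to the ancestor potential $\cA^{\sE}_q$, which you wave at. The paper carries this out explicitly (Proposition~\ref{ancestor-cubic-virasoro}): at $t=t_0\omega$ one finds that $\log S^{\sE}(q)$ is a single off-diagonal entry, so $S^{\sE}$ commutes with $\partial_z+z^{-1}\theta$ and hence with each $\cL_k$, and since the operators \eqref{cubic-virasoro} for the elliptic curve happen to contain no second-derivative terms, the cocycle of Corollary~\ref{cocycle-commute} vanishes. Finally, note that the Cayley-transformation argument in \cite{LSZ} is only fully established for $d=3$; the paper handles $d=4,6$ by a separate case analysis (via the Chazy equation and an unknown initial coefficient), which you should not fold into a single line.
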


We will prove Theorem \ref{virasoro-cubic} in three steps:
\begin{enumerate}
\item We recall the Virasoro constraints for descendent Gromov-Witten potential in \cite{OP}.
\item We use the Ancestor/Descedant correspondence (Theorem \ref{thm-ancestor-descendant}) to write the Virasoro constraints for the ancestor Gromov-Witten potential of the elliptic curves.
\item We use the LG/CY correspondence proved in \cite{LSZ} to complete a proof for Theorem \ref{virasoro-cubic}.
\end{enumerate}

\subsubsection{Ancestor and descendent}
For a compact K\"ahler manifold $X$, there are two types of Gromov-Witten invariants, called {\em ancestor invariants} and {\em descendent invariants}, depends on the choice of psi-classes.

Let $\{\alpha_i\in H^*(X, \mathbb{C})\}$ be a set of cohomology classes of $X$.
Let $\overline{\cM}_{g,n}(X, d)$ be the moduli stack of degree-$d$ stable maps from a connected genus $g$ curve with
$n$ markings to the target $X$.
The moduli stack has a virtual fundamental cycle, denoted by $[\overline{\cM}_{g,n}(X, d)]^{\rm vir}$.
Let $\pi: \overline{\cM}_{g,k}(X,d)\to \overline{\cM}_{g,k}$ be the forgetful morphism and ${\rm ev}_{i}: \overline{\cM}_{g,k}(X,d)\to X$ be the evaluation morphism given by the $i$-th marking.

The descendent GW invariants are defined by intersecting the virtual fundament cycles in GW theory with psi classes $\{\bar\psi_i\}$ on moduli space of stable maps.
Let $\{\alpha_a\}$ be a homogeneous basis of $H^*(X, \mathbb{C}).$
Define the {\em total descendent GW potential} of $X$ by
$$\cD^X=\exp\left(\sum_{g\geq 0}\hbar^{2g-2}\sum_{k, d}{Q^d\over k!}\int_{\left[\overline{\cM}_{g,k}(X, d)\right]^{\rm vir}}\prod_{i=1}^{k}
\sum_{m=0}^{\infty}\left(\widetilde{t}_{a}^{m}{\rm ev}_{i}^{*}(\alpha_a)\wpsi_i^m\right)\right).
$$

On the other hand, similar to the ancestor invariants \eqref{ancestor-inv} in LG A-model theories, the ancestor GW invariants are defined by intersecting the Gromov-Witten CohFT classes with psi classes on $\overline{\cM}_{g,k}$.
We denote by the \emph{ancestor GW invariants}
\begin{equation*}
\<\alpha_1\psi_1^{\ell_1},\cdots,\alpha_n\psi_n^{\ell_n}\>^{X}_{g,n,d}
=\int_{[\overline{\cM}_{g,n}(X,d)]^{\rm vir}}\prod_{k=1}^n{\rm ev}_k^*(\alpha_k)\pi^*\psi_k^{\ell_k}\,.
\end{equation*}
Let $$t:=\sum_{a}t^a\alpha_a\in H^*(X, \mathbb{C}), \quad \widetilde{\bt}(z):=\sum_{m\geq 0}\sum_{a}\wt_m^a\alpha_a\,z^m.$$
The \emph{ancestor GW correlation function}
\begin{equation}
 \label{GW-function}
 \LL\alpha_1\psi_1^{\ell_1},\cdots,\alpha_k\psi_k^{\ell_k}\RR_{g,k}^{X}(t)
 =\sum_{d}\sum_{m\geq0}{Q^d\over m!} \LD\alpha_1\psi_1^{\ell_1},\cdots,\alpha_k\psi_k^{\ell_k}, t, \cdots, t\RD_{g,k+m,d}^{X}\,.
 \end{equation}
 Via the dilaton shift
 $$\widetilde{\bq}(z)=\widetilde{\bt}(z)-\one\cdot z,$$
we denote the {\em total GW ancestor potential} of the target $X$ by
\begin{equation}
\label{GW-ancestor}
\mathcal{A}^{X}_{t}(\widetilde{\bq}):=\exp\left(\sum_{g}\hbar^{2g-2}\sum_{k}{1\over k!}\LL\widetilde{\bt}(\psi), \cdots, \widetilde{\bt}(\psi)\RR^{X}_{g, k}(t)\right).
\end{equation}
The total ancestor potential depends on a choice of $t$ while $\cD^X$ does not.

Using {\em topological recursion relations}, the operators $S^{X}_{t}$ defined by
\begin{equation}
\label{calibration}
(S^{X}_{t}(z)(\alpha_1), \alpha_2):=(\alpha_1, \alpha_2)+\<\!\<\frac{\alpha_1}{z-\bar\psi_1}, \alpha_2\>\!\>^{X}_{0, 2}(t)
\end{equation}
is a calibration with respect to the quantum connection in \eqref{quantum-connection}.
The operator $S^{X}_t(z)$ is a symplectic transformation.
\begin{theorem}
\cite{KM98, G-ham}
\label{thm-ancestor-descendant}
Let $F_1^{X}(t):=\LL\RR^X_{1,0}(t)$ be the genus-one GW generating function, then
	$$\mathcal{D}^X(\widetilde{\bq})=e^{F^{X}_1(t)}\widehat{S^X_t}^{-1}\mathcal{A}^X_t(\widetilde{\mathbf{q}}).$$
\end{theorem}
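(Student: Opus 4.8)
The plan is to follow the classical argument of Kontsevich--Manin and Givental \cite{KM98, G-ham}. The heart of the matter is the comparison, on the moduli of stable maps, between the descendent classes $\bar\psi_i$ that enter $\mathcal{D}^X$ and the ancestor classes $\pi^*\psi_i$ pulled back from $\overline{\cM}_{g,k}$ that enter $\mathcal{A}^X_t$ in \eqref{GW-ancestor}; the discrepancy is a sum over boundary strata which reorganizes, genus by genus, into the quantized action of the calibration $S^X_t(z)$ from \eqref{calibration}.

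First I would recall the standard comparison of $\psi$-classes, which on $\overline{\cM}_{g,n}$ is the geometric source of the topological recursion relations already used above. For $2g-2+k>0$ let $\pi\colon \overline{\cM}_{g,k+m}(X,d)\to \overline{\cM}_{g,k}$ be the morphism forgetting the last $m$ markings and stabilizing the contracted components. For $1\le i\le k$ one has
$$\bar\psi_i=\pi^*\psi_i+D_i,$$
where $D_i$ is the locus on which the $i$-th marking lies on a rational component contracted by $\pi$; on $D_i$ the virtual class splits, by the splitting axiom of Gromov--Witten theory, into a genus-zero factor carrying the $i$-th marking together with a node and the complementary factor. Iterating this identity to rewrite each power $\bar\psi_i^{\,\ell_i}$ in terms of $\pi^*\psi_i$ and nested boundary contributions, and then applying the splitting axiom on each stratum, expresses any descendent correlator of $X$ as a sum of ancestor correlators whose extra marked points are decorated by the genus-zero two-point descendent series. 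By the definition \eqref{calibration}, that two-point series is precisely the off-diagonal part of $S^X_t(z)$, so passing from $\bar\psi_i$ to $\pi^*\psi_i$ amounts exactly to dressing each insertion by $S^X_t$.

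Next I would upgrade this identity of correlators to the asserted identity of generating functions. Because $S^X_t(z)=1+S_1z^{-1}+\cdots$ is a lower-triangular symplectic transformation, its quantization $\widehat{S^X_t}$ is defined as in Section~\ref{sec-givental}, and one checks that the $S^X_t$-dressing of all insertions, summed over all genera, is precisely the action of $\widehat{S^X_t}^{-1}$ on the total potential; matching the dilaton shifts $\widetilde{\bq}(z)=\widetilde{\bt}(z)-\one z$ on the two sides is legitimate thanks to the string equation, which controls how $S^X_t(z)$ acts on the identity $\one$. The scalar prefactor $e^{F^X_1(t)}$ absorbs the genus-one part of the discrepancy: at genus one the comparison produces boundary terms carrying no remaining free insertions, and their sum equals $F^X_1(t)=\LL\RR^X_{1,0}(t)$, while the string and dilaton equations ensure that no further genus-dependent constants appear.

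The main obstacle is not conceptual but combinatorial: one must track the nested boundary strata, the automorphism factors, and the signs, and verify that their sum is exactly the exponential of the quadratic Hamiltonian attached to $S^X_t$ rather than some cocycle-corrected operator. This bookkeeping is precisely the content of \cite{KM98, G-ham}, which I would invoke for the exact matching; the only new point in our setting is that $\mathcal{D}^X$ and $\mathcal{A}^X_t$ here are the descendent and ancestor potentials of the single Gromov--Witten CohFT of $X$, so the statement is a direct specialization of those references.
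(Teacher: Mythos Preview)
The paper does not give a proof of this theorem: it is quoted as a known result with the citations \cite{KM98, G-ham} and no argument is supplied. Your proposal is a faithful sketch of the standard Kontsevich--Manin/Givental proof from those references---the $\bar\psi_i=\pi^*\psi_i+D_i$ comparison, the splitting axiom on boundary strata, and the identification of the resulting genus-zero two-point dressing with the calibration $S^X_t$---so in that sense it goes well beyond what the paper itself does, which is simply to cite the result.
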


\subsubsection{Virasoro constraints for the descendents of elliptic curves}
Let $W_d$ be the Fermat CY polynomial in \eqref{fermat-1-fold}.
The hypersurface determined by the $W_d=x_1^{a_1}+x_2^{a_2}+x_3^{a_3}$ in the weighted projective space $\mathbb{P}^2\left({d\over a_1}, {d\over a_2}, {d\over a_3}\right)$ is an elliptic curve
$$\sE_d:=X_{W_d}=(W_d=0)\subset\mathbb{P}^2\left({d\over a_1}, {d\over a_2}, {d\over a_3}\right).$$
The Gromov-Witten theory does not depend on the choice of the elliptic curves. So we drop the subscript in $\sE_d$ and consider a basis of cohomology $H^*(\sE, \C)$, given by
\begin{itemize}
\item the identity class $\one\in H^0(\sE, \C)$,
\item the Poincar\'e dual of the point $\omega\in H^2(\sE, \C)$,
\item the classes $\alpha, \beta\in H^{1}(\sE, \C)$, which is  a symplectic basis of $H^1(\sE, \C)$.
\end{itemize}
We assign a bigrading and a parity for the basis $\{1, \alpha ,\beta, \omega\}$ as below.
The bigrading is the Hodge grading shifted by $-1/2$. 
\begin{table}[H]
\caption{State space of the elliptic curve}
\begin{center}
\begin{tabular}{|c|c|c|c|c|}
  \hline
  $\phi$ &  $1$ &    $\omega$  & $\alpha$ &   $\beta$\\
  \hline
 $(\mu_\phi^+,\mu_\phi^-)$ & $(-{1\over 2}, -{1\over 2})$ & $({1\over 2}, {1\over 2})$   & $({1\over 2}, -{1\over 2})$ & $(-{1\over 2}, {1\over 2})$ \\
  \hline
 $|\phi|$ & $1$ & $1$ & $-1$ & $-1$\\
  \hline
\end{tabular}
\end{center}
\label{table:elliptic}
\end{table}

Let
$\Psi: H^{*}(\sE, \C)\to \cH_{W_d, \<J\>}$
be a linear map defined by
\begin{equation}
\label{cubic-isom}
\Psi(\one)=1|J\>, \quad \Psi(\omega)=1|J^{-1}\>, \quad \Psi(\alpha)=h(W_d)d\bx, \quad \Psi(\beta)=d\bx.
\end{equation}
Comparing Table \ref{table-cubic} with Table \ref{table:elliptic}, we see  the isomorphism $\Psi$ preserves bigrading and parity.

We use the ordered set of variables
$\Big\{\widetilde{t}^0_{k}, \widetilde{s}^0_{k}, \widetilde{s}^1_{k}, \widetilde{t}^1_{k}\Big\}$
to parametrize both the descendent insertions
$$\{\one\wpsi^k, \alpha\wpsi^k, \beta\wpsi^k, \omega\wpsi^k\},$$
and the ancestor insertions
$$\{\one\psi^k, \alpha\psi^k, \beta\psi^k, \omega\psi^k\}.$$
Using our terminology, the Virasoro operators $\{L^{\sE}_k\}_{k\geq -1}$ for the target elliptic curve $\sE$ are given by
\begin{align}
L_k^{\sE}=&-(k+1)!{\partial\over\partial \widetilde{t}_{k+1}^0}\nonumber\\
&+\sum_{\ell\geq0}\left((\ell)_{k+1}\widetilde{t}_{\ell}^0 {\partial\over \widetilde{t}_{k+\ell}^0}+(\ell+1)_{k+1}\widetilde{t}_\ell^1{\partial\over \partial \widetilde{t}_{k+\ell}^{1}}\right)\label{viraroso-op}
\\&+\sum_{\ell\geq0}\left((\ell+1)_{k+1}\widetilde{s}_{\ell}^{0} {\partial\over \widetilde{s}_{k+\ell}^{0}}+(\ell)_{k+1}\widetilde{s}_\ell^1{\partial\over \partial \widetilde{s}_{k+\ell}^{1}}\right).\nonumber
\end{align}
Similar as Proposition \ref{Virasoro-relation}, the operators satisfy
$$
[\widetilde{L}_n, \widetilde{L}_m]=(n-m) \widetilde{L}_{n+m}.
$$

In fact, under the linear isomorphism $\Psi: H^{*}(\sE, \C)\to \cH_{W_d, \<J\>}$ defined by \eqref{cubic-isom}, we can identify the two Virasoro operators
$$L_k(\bt)=L_k^{\sE}(\widetilde{\bt})\vert_{\widetilde{\bt}=\bt}.$$

The Virasoro constraints for the (descendent) Gromov-Witten theory of the elliptic curve are solved by Okounkov and Pandharipande \cite{OP}.
The following result is a special case of \cite[Theorem 3]{OP}, which is deduced from \cite[Theorem 3]{OP} by summing over all degree $d$ in the absolute theory.
\begin{proposition}
\cite{OP}
\label{descedant-virasoro}
The Virasoro constraints  for the absolute descendent Gromov-Witten theory of the elliptic curve $\sE$ hold:
\begin{equation}
\label{virasoro-curve}
L^{\sE}_k \cD^\sE=0, \quad \textit{for all }  k\geq -1.
\end{equation}
\end{proposition}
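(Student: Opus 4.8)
The plan is to obtain Proposition \ref{descedant-virasoro} directly from the Okounkov--Pandharipande solution of the Virasoro constraints for target curves \cite{OP}. In \cite[Theorem 3]{OP} the Virasoro constraints are established for the descendent Gromov--Witten theory of a nonsingular target curve; specializing the target to the genus-one curve $\sE$ and assembling the degree-graded constraints into the absolute generating function $\cD^\sE$ (with the Novikov variable recording the curve class $Q^d$) yields precisely $L^\sE_k\cD^\sE=0$ for all $k\geq -1$. I note that since $\sE$ is a one-dimensional Calabi--Yau we have $c_1(\sE)=0$, so no Novikov-derivative term is present in $L^\sE_k$, consistently with the explicit form \eqref{viraroso-op}.

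The only substantive work along this route is the bookkeeping needed to match the two normalizations of the Virasoro operators. I would (i) fix the cohomology basis $\{\one,\alpha,\beta,\omega\}$ of $H^*(\sE,\C)$ carrying the Hodge-shifted bigrading of Table \ref{table:elliptic}, together with the dilaton shift $\widetilde\bq(z)=\widetilde\bt(z)-\one z$ of \eqref{GW-ancestor}; (ii) expand the abstract operator $L^\sE_k=(z^{-1/2}D^{k+1}z^{-1/2})^{\widehat{}}-{\delta_{0,k}\over 4}\,{\rm Str}(\theta^2-{1\over 4})$, with $D=z^2\partial_z+z(\theta+1)$, in these Darboux coordinates by Lemma \ref{quadratic-Lk} and Proposition \ref{Virasoro-quantization}, so as to recover \eqref{viraroso-op} (the constant term drops out here because $\chi(\sE)=0$); and (iii) check that the resulting operator coincides, up to the harmless overall normalization of the partition function, with the operator appearing in \cite{OP}, taking care of the $(-1)^{|\phi|}$ signs forced by the odd classes $\alpha,\beta$, as dictated by \eqref{parity} and the cocycle \eqref{cocycle}.

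The main obstacle --- and the reason this statement has to be imported rather than proved within the present paper --- is that the quantum cohomology of an elliptic curve is \emph{not} semisimple, so Givental's Theorem \ref{semi-simple-Virasoro} on the Virasoro constraints for semisimple Frobenius manifolds does not apply. A genuinely self-contained proof would thus have to reproduce the hard content of \cite{OP}: encode the stationary descendent invariants of $\sE$ through the infinite-wedge operator formalism and the Gromov--Witten/Hurwitz correspondence, verify the Virasoro relations in the stationary sector by an explicit computation with completed cycles, and then extend to arbitrary descendent insertions using the string, dilaton and divisor equations together with the TQFT gluing rules for curves. An alternative attempt via the quasi-modularity of the elliptic-curve invariants and the ${\rm SL}_2(\Z)$-action on quasi-modular forms would itself need a nontrivial identification of that action with the operators \eqref{viraroso-op}. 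In either case, quoting \cite{OP} is clearly the preferred path, and it is what I would do.
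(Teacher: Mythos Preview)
Your proposal is correct and takes essentially the same approach as the paper: the paper does not prove this proposition but simply imports it from \cite[Theorem 3]{OP}, noting that the absolute statement follows by summing over all degrees $d$. Your additional remarks on matching normalizations and on why a self-contained proof is out of reach (non-semisimplicity of the elliptic curve) are accurate and go somewhat beyond what the paper records.
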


\subsubsection{Virasoro constraints for ancestor GW theory of elliptic curves}

According to Theorem \ref{thm-ancestor-descendant}, the descendent potential $\mathcal{D}^{\sE}$ does not depend on the choice of $t$.
We restrict to the slice $t=t_0\omega$, where the genus one potential $F^{\sE}_1(t_0)$ and the operator $S^{\sE}(t_0)$ can be computed explicitly for the elliptic curve $\sE$.
We make a coordinate change
$$q:=e^{t_0}.$$
From now on, we write
$$\begin{dcases}
F^{\sE}_1(q):=F^{\sE}_1(t_0)|_{t_0=\log q}, \\
S^{\sE}(q):=S^{\sE}(t_0)|_{t_0=\log q}.
\end{dcases}$$

Now we use Theorem \ref{thm-ancestor-descendant} and Proposition \ref{descedant-virasoro} to prove
\begin{proposition}
\label{ancestor-cubic-virasoro}
The Virasoro constraints
\begin{equation}
\label{virasoro-ancestor}
L_k^{\sE} \mathcal{A}^{\sE}_{q}=0, \quad \forall k\geq -1,
\end{equation}
holds for the ancestor GW theory of the elliptic curves.
\end{proposition}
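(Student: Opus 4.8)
The plan is to transport the Virasoro constraints from the descendent potential $\mathcal{D}^{\sE}$ (Proposition \ref{descedant-virasoro}) to the ancestor potential $\mathcal{A}^{\sE}_{q}$ using the Ancestor/Descendent correspondence (Theorem \ref{thm-ancestor-descendant}) together with Givental's quantization formalism from Section \ref{sec-givental}. First I would specialize Theorem \ref{thm-ancestor-descendant} to the slice $t = t_0\omega$ and set $q = e^{t_0}$, so that
$$\mathcal{D}^{\sE} = e^{F_1^{\sE}(q)}\,\widehat{S^{\sE}(q)}^{-1}\,\mathcal{A}^{\sE}_{q}.$$
Along this slice the Euler vector field vanishes as a tangent vector, because the only coordinate turned on is the one dual to $\omega$ and its coefficient $1 - \deg_\C\omega$ equals $0$; hence the Euler-multiplication term modifying the ancestor Virasoro operator at the point $t_0\omega$ drops out, and (as $\sE$ is a Calabi--Yau curve, so there is no $c_1(T\sE)$-term either) that operator is built solely from the Hodge grading $\theta$ of Table \ref{table:elliptic}. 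Thus the operator $L_k^{\sE}$ of \eqref{viraroso-op}, which is $z^{-1/2}D^{k+1}z^{-1/2}$ quantized with $D = z(\partial_z + z^{-1}\theta)z$ as in \eqref{auxiliary-operator}--\eqref{virasoro-diff} (and with no scalar term, since ${\rm Str}(\theta^2 - {1\over 4}) = 0$ here), serves simultaneously as the descendent and the ancestor Virasoro operator.

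Second, I would show that $L_k^{\sE}$ commutes with the operator $e^{F_1^{\sE}(q)}\widehat{S^{\sE}(q)}^{-1}$ occurring in the correspondence. The calibration $S^{\sE}(q) = S^{\sE}_{t_0\omega}(z)$ is a lower-triangular symplectic transformation and, being a fundamental solution of the quantum connection \eqref{quantum-connection}, it obeys a homogeneity relation which along the slice $E\star_t = 0$ says precisely that $S^{\sE}(q)$ intertwines $z(\partial_z + z^{-1}\theta)z$ with itself. Applying Corollary \ref{cocycle-commute} to the symplectic transformation $S^{\sE}(q)^{-1}$ and the infinitesimal symplectic operator $\mathcal{L}^{\sE}_k = z^{-1/2}D^{k+1}z^{-1/2}$ then gives $\widehat{S^{\sE}(q)}^{-1}\widehat{\mathcal{L}^{\sE}_k}\widehat{S^{\sE}(q)} = \widehat{S^{\sE}(q)^{-1}\mathcal{L}^{\sE}_k S^{\sE}(q)} + c_k = \widehat{\mathcal{L}^{\sE}_k} + c_k$, with $c_k$ a scalar constant supplied by the cocycle \eqref{cocycle}; one then checks that $c_k$ is cancelled exactly by the genus-one factor $e^{F_1^{\sE}(q)}$, as in Givental's treatment of the descendent/ancestor comparison.

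Granting these two steps, the conclusion is immediate: applying $L_k^{\sE}$ to $\mathcal{D}^{\sE} = e^{F_1^{\sE}(q)}\widehat{S^{\sE}(q)}^{-1}\mathcal{A}^{\sE}_{q}$ and using Proposition \ref{descedant-virasoro},
$$0 = L_k^{\sE}\mathcal{D}^{\sE} = e^{F_1^{\sE}(q)}\,\widehat{S^{\sE}(q)}^{-1}\big(L_k^{\sE}\mathcal{A}^{\sE}_{q}\big),$$
so $L_k^{\sE}\mathcal{A}^{\sE}_{q} = 0$ for all $k \geq -1$ by invertibility of $e^{F_1^{\sE}(q)}$ and of $\widehat{S^{\sE}(q)}$, which is \eqref{virasoro-ancestor}.

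I expect the main obstacle to be the bookkeeping of the second step: verifying that conjugation by $\widehat{S^{\sE}(q)}$ carries the descendent Virasoro operator onto the ancestor one with the anomaly constant $c_k$ matched precisely by the genus-one term. This is the same mechanism that underlies Givental's proof of Theorem \ref{semi-simple-Virasoro}, but the Frobenius manifold of $\sE$ is not semisimple, so one cannot simply invoke that result; instead one must run the cocycle computation \eqref{cocycle} directly, tracking how the $pp$- and $qq$-parts of $\mathcal{L}^{\sE}_k$ (Lemma \ref{quadratic-Lk}) transform, and use the explicit form of the calibration $S^{\sE}(q)$ and the genus-one potential $F_1^{\sE}(q)$ of the elliptic curve to pin down the constant.
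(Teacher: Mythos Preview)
Your overall plan coincides with the paper's: specialize Theorem \ref{thm-ancestor-descendant} to the slice $t=t_0\omega$, and show that $L_k^{\sE}$ commutes with $e^{F_1^{\sE}(q)}\widehat{S^{\sE}(q)}^{-1}$. The difference lies in your second step, where you expect a nontrivial cocycle $c_k$ to be cancelled by the genus-one factor. In fact both pieces commute with $L_k^{\sE}$ independently, and the computation is much shorter than you anticipate.

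First, $F_1^{\sE}(q)=-\log(q)_\infty$ is a function of $q$ alone; it does not involve the variables $\widetilde{t}^i_\ell,\widetilde{s}^i_\ell$ on which $L_k^{\sE}$ acts, so $e^{F_1^{\sE}(q)}$ commutes with $L_k^{\sE}$ trivially. Second, rather than invoking an abstract homogeneity relation for the calibration, the paper computes $\log S^{\sE}(q)$ directly from \eqref{calibration}: on this slice it is the rank-one nilpotent matrix with a single entry $q/z$ sending $\one\mapsto\omega$. From this explicit form one reads off $S^{\sE}(q)(\partial_z+z^{-1}\theta)S^{\sE}(q)^{-1}=\partial_z+z^{-1}\theta$, hence $S^{\sE}(q)\,\cL_k^{\sE}\,S^{\sE}(q)^{-1}=\cL_k^{\sE}$. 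The cocycle $c_k$ then vanishes outright: the quadratic Hamiltonian of $\log S^{\sE}(q)$ has, besides $pq$-terms, only the single $qq$-term $(\widetilde{q}_0^0)^2$, whereas $\cL_k^{\sE}$ has no $pp$-terms at all --- inspect \eqref{viraroso-op} and note that every term is first order in $\partial/\partial\widetilde{t},\partial/\partial\widetilde{s}$ (equivalently, the Pochhammer coefficients in the second-derivative line of \eqref{virasoro-operator} vanish for $\mu_a^+\in\{\pm\tfrac12\}$). So by Lemma \ref{key-cocycle} there is nothing to cancel, and $\widehat{S^{\sE}(q)}L_k^{\sE}\widehat{S^{\sE}(q)}^{-1}=L_k^{\sE}$ on the nose. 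Your anticipated ``main obstacle'' therefore does not arise.
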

\begin{proof}
We have
$$\mathcal{F}^{\sE}_{1}(q)=-\log (q)_{\infty}= -\log \prod_{n=1}^{\infty}(1-q^n).$$
Thus $\exp(\mathcal{F}^{\sE}_{1}(q))$ commutes with the Virasoro operators
\begin{equation}
\label{f1-commute}
L_k^{\sE} =e^{-\mathcal{F}^{\sE}_{1}(q)} L_k^{\sE} \, e^{\mathcal{F}^{\sE}_{1}(q)}.
\end{equation}

Next we consider the operator $S^{\sE}(q)$. Using the formula of $S^X_{t}$ in \eqref{calibration} and fixing a basis $(\one, \omega, \alpha, \beta)\in H^*(\sE, \C)$, we obtain that $\log S^{\sE}(q)\in {\rm End}(H^*(\sE, \C))$ is given by the matrix
\begin{equation}\label{S-cubic}
	 \begin{pmatrix}
	 0&0&0&0\\
	 {q\over z}&0&0&0\\
	 0&0&0&0\\
	 0&0&0&0
	 \end{pmatrix}.
\end{equation}
Thus the quadratic Hamiltonian for $\log S_{\tau}$ is given by
\begin{equation}
\label{quadratic-calibration}
\Omega(\log S(q) f, f)=-q{(\tilde{q}_0^0)^2\over 2}-q\cdot\sum_{k\geq 0} \tilde{q}^0_{k+1} \tilde{p}^{0}_k\,.
\end{equation}
Now we calculate $$S(q)\circ \mathcal{L}_k \circ S(q)^{-1}$$ and the cocycle that appears in the conjugation.

By \eqref{S-cubic}, we have
$$S(q)\circ (\partial_z+z^{-1}\theta) \circ S(q)^{-1}=\partial_z+z^{-1}\theta.$$
Then \eqref{virasoro-diff} implies that,
$$S(q)\circ \mathcal{L}_k \circ S(q)^{-1}=\mathcal{L}_k.$$
Since the term \eqref{quadratic-calibration} only contains $(\tilde{q}_0^0)^2$-term but $\widetilde{\mathcal{L}}_k$ does not have any $p_ap_b$-terms, all cocycles vanish by Lemma \ref{key-cocycle}. We obtain that the quantization operator $\widehat{S}(q)$ commutes with the Virasoro operators
$$\widehat{S}(q) L^{\sE}_k \,\widehat{S}(q)^{-1}= L^{\sE}_k.$$
Combining \eqref{f1-commute}, we obtain
\begin{align*}
L_k^{\sE} \cA_\tau^{\sE}
&=\widehat{S}(q) L^{\sE}_k \,\widehat{S}(q)^{-1}\cA_\tau^{\sE}\\
&=\widehat{S}(q) e^{-\mathcal{F}^{\sE}_{1}(q)} L_k^{\sE} \, e^{\mathcal{F}^{\sE}_{1}(q)} \,\widehat{S}(q)^{-1}\cA_\tau^{\sE}\\
&=\widehat{S}(q) e^{-\mathcal{F}^{\sE}_{1}(q)} L_k^{\sE} \, \cD^{\sE}\\
&=0.
\end{align*}
The last two equations follow from Theorem \ref{thm-ancestor-descendant}  and Proposition \ref{descedant-virasoro}.
\end{proof}

\subsubsection{Virasoro constraints for Fermat LG pairs $(W_d, \<J\>)$}
Now we consider the last step in the proof of Thereom \ref{virasoro-cubic} using LG/CY correspondence in \cite{LSZ}.
We remark that the LG/CY correspondence does depend on the choice of the defining polynomial.
The approach in \cite{LSZ} relates the GW theory and LG theory by  holomorphic Cayley transformations of quasi-modular forms.
Let us recall the construction and quasi-modular forms briefly \cite{KZ, Zag, SZ}.

Let $\tau$ be the coordinate of the upper half plane.
Let $\Gamma:={\rm PSL}(2, \mathbb{Z})$ be the modular group.
Let $E_{2k}(\tau)$ be the weight-$2k$ Eisenstein series, and
$$\widetilde{M}(\Gamma):=\C[E_2(\tau), E_4(\tau), E_6(\tau)]$$
be the ring of quasi-modular forms \cite{KZ}.
There is a natural ring isomorphism, called {\em modular completion},  from $\widetilde{M}(\Gamma)$ to
$$\widehat{M}(\Gamma):=\C[\widehat{E}_2(\tau, \bar\tau), E_4(\tau), E_6(\tau)],$$
the ring of almost holomorphic modular forms,
by sending the ring generators $E_4(\tau)$ and $E_6(\tau)$ to themselves, and $E_2(\tau)$ to its modular completion
$$\widehat{E}_2(\tau,\bar\tau):=E_2(\tau)-{3\over \pi {\rm Im}(\tau)}=E_2(\tau)-{6\sqrt{-1}\over \pi (\tau-\bar\tau)}.$$

Fixing a complex multiplication point $\tau_*$ in the upper half plane, and a constant $c\in\C$, there is a coordinate change, called {\em Cayley transformation},
\begin{equation}
\label{cayley}
s=c\cdot {\tau -\tau_*\over \tau-\bar{\tau_*}}.
\end{equation}
This coordinate change induces a linear operator on $\widehat{M}(\Gamma)$, denoted by $\mathscr{C}_{\tau_*}$, which sends a weight $2k$ almost holomorphic modular form $\widehat{f}(\tau, \bar\tau)\in\widehat{M}_{2k}(\Gamma)$,
to analytic function of $s$ and its complex conjugate $\bar{s}$, denoted by
$$\mathscr{C}_{\tau_*}(\widehat{f})(s, \bar{s})=(2\pi\sqrt{-1}c)^{-k}\left({\tau(s)-\bar\tau_*\over\tau_*-\bar\tau_*}\right)^{2k}\widehat{f}(\tau(s), \bar\tau(s)),$$
Here $\tau(s)$ is the inverse of \eqref{cayley}, and the operator $\mathscr{C}_{\tau_*}$ is called the {\em Cayley transformation on
$\widehat{M}(\Gamma)$}.
The modular completion $\widetilde{M}(\Gamma)\to \widehat{M}(\Gamma)$ has an inverse, called a {\em holomorphic limit}, by taking $\lim_{\bar\tau\to\infty}$, or equivalently $q\to 0$.
A similar notion of holomorphic limit can be defined for $\mathscr{C}_{\tau_*}(\widehat{f})(s, \bar{s})$, and one obtain a {\em holomorphic Cayley transformation} on $\widetilde{M}(\Gamma)$ \cite{Zag, SZ}, denoted by $\mathscr{C}^{\rm hol}_{\tau_{*}}$, via the commutative diagram

\begin{figure}[H]
  \renewcommand{\arraystretch}{1}
\begin{displaymath}
\xymatrixcolsep{4pc}\xymatrixrowsep{4pc}\xymatrix{  \widetilde{M}(\Gamma)
\ar@/^/[r]^{\textrm{modular completion}}\ar@{.>}[d]^{\mathscr{C}_{ \tau_*}^{\rm hol} }
 & \widehat{M}(\Gamma) \ar[d]^{\mathscr{C}_{\tau_*}} 
 \\
 \mathscr{C}^{\rm hol}_{\tau_{*}}(\widetilde{M}(\Gamma)) &  \ar@/^/[l]^{\textrm{holomorphic limit}}  \mathscr{C}_{\tau_*}(\widehat{M}(\Gamma))
 }
\end{displaymath}
  \label{figureinvarianceRamanujan}
\end{figure}

For each $d=3, 4, 6$, we make a coordinate change
$$q=\exp\left({2\pi\sqrt{-1}\tau\over d}\right).$$
Recall that the ancestor GW functions are Fourier series of quasi-modular forms \cite{OP, LSZ}
$$\LL\alpha_1\psi_1^{\ell_1}\cdots \alpha_n\psi_n^{\ell_n}\RR_{g, n}^{\sE_d}(q)\in \widetilde{M}(\Gamma).$$
The following LG/CY correspondence is proved in \cite{LSZ} for the Fermat cubic LG pair 
$$\Big(W_3=x_1^3+x_2^3+x_3^3, \<J\>\Big).$$
\begin{theorem}
\cite[Theorem 1]{LSZ}
\label{thm-lg-cy}
Let
$$\Psi: H^*(\sE_3, \C)\to \cH_{W_3,\<J\>}$$ be the linear isomorphism defined in \eqref{cubic-isom}.
Let $\tau_*=-{\sqrt{-1}\over 3}\exp\left({2\pi\sqrt{-1}\over 3}\right).$
There exists a holomorphic Cayley transformations $\mathscr{C}^{\rm hol}_{\tau_*}$, such that
$$\mathscr{C}^{\rm hol}_{\tau_*}\left(\LL\alpha_1\psi_1^{\ell_1}\cdots \alpha_n\psi_n^{\ell_n}\RR_{g, n}^{\sE_3}(q)\right)=\LL \Psi(\alpha_1)\psi_1^{\ell_1}\cdots \Psi(\alpha_n)\psi_n^{\ell_n}\RR_{g, n}^{\rm FJRW}(s).$$
\end{theorem}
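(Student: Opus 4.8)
The plan is to realize the claimed identity as a statement that the Gromov--Witten and FJRW ancestor functions are the two holomorphic limits of one and the same almost-holomorphic modular object, following the strategy of \cite{SZ}. First I would record the modularity of the source side: by Okounkov--Pandharipande \cite{OP}, each ancestor function $\LL\alpha_1\psi_1^{\ell_1}\cdots\alpha_n\psi_n^{\ell_n}\RR_{g,n}^{\sE_3}(q)$ is a quasi-modular form in $\widetilde M(\Gamma)=\C[E_2(\tau),E_4(\tau),E_6(\tau)]$ of a definite weight $2k$, where $k$ is determined by the $\ell_i$ and by the bigradings in Table \ref{table:elliptic} (this weight bookkeeping is exactly the bigrading that $\Psi$ preserves under \eqref{cubic-isom}). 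Replacing $E_2$ by its completion $\widehat E_2(\tau,\bar\tau)$ produces an almost-holomorphic modular form in $\widehat M(\Gamma)$ of weight $2k$, a \emph{frame-independent} object whose $q\to 0$ holomorphic limit recovers the original GW function. Since by construction $\mathscr C^{\rm hol}_{\tau_*}$ means ``complete to $\widehat M(\Gamma)$, apply the frame change \eqref{cayley} centered at $\tau_*$, then take the holomorphic limit in the coordinate $s$,'' the theorem becomes the assertion that this \emph{same} completed object has the FJRW function as its holomorphic limit at the complex-multiplication point $\tau_*=-\frac{\sqrt{-1}}{3}\exp(2\pi\sqrt{-1}/3)$.

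I would establish that assertion in two stages. In genus zero I would identify the two Frobenius structures under $\Psi$: the quantum differential equation \eqref{quantum-connection} of $\mathrm{GW}(\sE_3)$ and the one of $\mathrm{FJRW}(W_3,\<J\>)$ are both incarnations of the Picard--Fuchs system of the cubic family over the $\tau$-line, so their flat sections near the large-complex-structure cusp and near $\tau_*$ are related by a connection matrix whose modular realization is exactly $\mathscr C_{\tau_*}$. Comparing the calibration $S^{\sE}_t(z)$ of \eqref{calibration} with the FJRW calibration under this analytic continuation then verifies the $g=0$ case, and simultaneously pins down the constant $c$ in \eqref{cayley} and the normalization $h(W_d)$ used to match the pairings.

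To pass to higher genus I would invoke Givental's quantization formalism, but with modularity replacing semisimplicity: the elliptic curve is \emph{not} generically semisimple, so Theorem \ref{semi-simple-Virasoro} and the Givental--Teleman reconstruction are unavailable here. Instead, the modular completion endows each total ancestor potential with a holomorphic-anomaly (Feynman-graph) structure in which the only non-holomorphic dependence enters through a single propagator proportional to $\widehat E_2-E_2$, governed by the genus-zero two-point data already matched above. Because the Cayley transformation acts on $\widehat M(\Gamma)$ as a symplectic change of frame whose quantization $\widehat{\mathscr C}_{\tau_*}$ intertwines precisely the propagator and vertex contributions of the two Feynman sums, an induction on $2g-2+n$ that runs the anomaly recursion on the GW and FJRW sides in parallel shows that $\mathscr C^{\rm hol}_{\tau_*}$ carries one tower of correlation functions to the other. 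The main obstacle is exactly this higher-genus step: absent semisimplicity there is no $R$-matrix shortcut, so the genuine work is to prove that the holomorphic-anomaly/propagator structure is preserved by the Cayley transformation and that the genus-zero matching therefore propagates to all $(g,n)$.
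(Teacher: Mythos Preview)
The paper does not contain a proof of this statement: Theorem~\ref{thm-lg-cy} is quoted verbatim from \cite[Theorem 1]{LSZ} and used as a black box in the proof of Theorem~\ref{virasoro-cubic}. There is therefore no ``paper's own proof'' against which to compare your proposal.

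That said, a brief comment on your sketch relative to what the paper does indicate about the argument in \cite{LSZ}. In the discussion of the quartic and sextic cases at the end of the proof of Theorem~\ref{virasoro-cubic}, the paper recalls from \cite[Proposition 1]{LSZ} that every ancestor FJRW correlation function lies in the differential ring $\C[f,f',f'']$ generated by a single genus-one function $f(s)$ satisfying the Chazy equation $2f'''-2ff''+3(f')^2=0$, and that this ring is matched under the Cayley transformation with the ring of quasi-modular forms on the GW side. Your proposal instead frames the higher-genus step through a holomorphic-anomaly/Feynman-graph recursion with the propagator $\widehat E_2-E_2$. These are compatible pictures, but the Chazy-equation route is concretely what \cite{LSZ} exploits: once both towers of correlators are known to live in isomorphic three-generator differential rings determined by the same ODE, the identification reduces to matching a finite set of initial data (computed in \cite{LLSZ} for the cubic). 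Your anomaly-recursion outline is morally equivalent but, as you acknowledge, leaves the key inductive step (that the Cayley transformation intertwines the two propagator structures at all $(g,n)$) as an assertion rather than an argument; the differential-ring formulation makes that step finite and checkable.
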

Here $s$ is the parameter of $1|J^2\>\in \cH_{W_3, \<J\>}$.
The holomorphic Cayley transformation sends the ancestor GW functions to Taylor series of $s$, near $s=0$, which is $\tau=\tau_*$.
The resulting Taylor series are exactly the corresponding ancestor FJRW functions.
We can rewrite Theorem \ref{thm-lg-cy} in terms of ancestor potentials by
\begin{equation}
\label{cayley-ancestor}
\mathscr{C}^{\rm hol}_{\tau*}(\cA^{\sE_3}_{q}(\widetilde{\bq}))=\cA_s^{W_3,\<J\>}(\bq).
\end{equation}

{\em A proof of Theorem \ref{virasoro-cubic}:}
We deal with the Fermat cubic pair $(W_3, \<J\>)$ first.
By the construction of the holomorphic Cayley transformation, we see it commutes with the Virasoro operators
$$L_k\mathscr{C}^{\rm hol}_{\tau*}=\mathscr{C}^{\rm hol}_{\tau*}L_k^{\sE_3}.$$
Now applying \eqref{cayley-ancestor} and \eqref{virasoro-ancestor}, we have
\begin{align*}
L_k\left(\cA_s^{W_3,\<J\>}(\bq)\right)
=L_k\mathscr{C}^{\rm hol}_{\tau*}(\cA^{\sE_3}_{q}(\widetilde{\bq}))
=\mathscr{C}^{\rm hol}_{\tau*}L_k^{\sE_3}(\cA^{\sE_3}_{q}(\widetilde{\bq}))=0.
\end{align*}

Next we discuss for the Fermat quartic and the Fermat sextic.
In fact, for all three cases, let $s$ be the parameter of $1|J^{d-1}\>\in \cH_{W_d, \<J\>}$.
Then according to \cite[Proposition 1]{LSZ}, all the ancestor FJRW functions satisfy
$$\LL\prod_{i=1}^{k}\tau_{\ell_i}(\phi_{i})\RR_{g,k}^{W_d, \<J\>}(s)\in \C[f,f',f''],$$
where $f'(s)={d\over ds}f(s)$, and $f(s)$ is the genus one FJRW function
$$f(s):=\LL 1|J^{d-1}\>\RR^{W_d, \<J\>}_{1,1}(s).$$
It satisfies a Chazy equation
$$2f'''-2f\cdot f''+3(f')^2=0$$
and therefor is completely determined by the first three coefficients, two of which vanish by the construction of the moduli space.

The remaining coefficient of the cubic case $(W_3, \<J\>)$ is calculated in \cite{LLSZ}, which completely determines the holomorphic Cayley transformation $\mathscr{C}^{\rm hol}_{\tau*}$.
Now for the Fermat quartic $W_4$ and the Fermat sextic $W_6$, the value of the remaining coefficient is not known due to the complexity of the computation.
There are two situations:
\begin{enumerate}
\item
if the coefficient is nonzero, then it determines a holomorphic Cayley transformation as in Theorem \ref{thm-lg-cy}. 
\item
if the coefficient vanishes, then we have a trivial holomorphic Cayley transformation.
\end{enumerate}
In either case, Virasoro constraints hold.
\qed

\subsubsection{Additional Virasoro constraints}
Following the work of \cite{OP}, we introduce additional differential operators  for $k\geq -1$: 
\begin{align*}
D_k=&-(k+1)!{\partial\over\partial s_{k+1}^0}
+\sum_{\ell\geq0}\left((\ell)_{k+1}t_{\ell}^0 {\partial\over \partial s_{k+\ell}^0}+(\ell+1)_{k+1}s_\ell^1{\partial\over \partial t_{k+\ell}^{1}}\right),\\
\bar{D}_k=&-(k+1)!{\partial\over\partial s_{k+1}^1}
+\sum_{\ell\geq0}\left((\ell)_{k+1}t_{\ell}^0 {\partial\over \partial s_{k+\ell}^1}-(\ell+1)_{k+1}s_\ell^0{\partial\over \partial t_{k+\ell}^{1}}\right).
\end{align*}
It is proved that the descendent GW potential of the elliptic curves are annihilated by these differential operators \cite[Theorem 4]{OP}.
As an application, we show that the ancestor potential $\mathcal{A}^{\rm FJRW}_{W_d,\<J\>}$ are also annihilated by these operators.
\begin{theorem}
For LG pairs $(W_d,\<J\>)$, we have additional constriants
	$$D_k \mathcal{A}^{\rm FJRW}_{W_d,\<J\>}=\bar{D}_k \mathcal{A}^{\rm FJRW}_{W_d,\<J\>}=0, \quad k\geq -1.$$
\end{theorem}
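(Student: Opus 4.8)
First I would recall the input from \cite[Theorem 4]{OP}: the total descendent Gromov--Witten potential $\cD^{\sE}$ of the elliptic curve is annihilated by the operators $D_k$ and $\bar D_k$, now regarded as operators in the descendent variables dual to $\{\one\wpsi^k,\alpha\wpsi^k,\beta\wpsi^k,\omega\wpsi^k\}$. The rest of the proof would mirror the three-step argument for Theorem \ref{virasoro-cubic}, with \cite[Theorem 4]{OP} in the role played there by \cite[Theorem 3]{OP}.

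Second, I would transport this to the ancestor potential $\cA^{\sE}_q$ along the slice $t=t_0\omega$, $q=e^{t_0}$, using the Ancestor/Descendent correspondence $\cD^{\sE}=e^{F_1^{\sE}(q)}\widehat{S^{\sE}(q)}^{-1}\cA^{\sE}_q$ of Theorem \ref{thm-ancestor-descendant}. Since $F_1^{\sE}(q)=-\log\prod_{n\ge1}(1-q^n)$ is a scalar function of $q$, it commutes with $D_k$ and $\bar D_k$; the crucial point, in exact analogy with Proposition \ref{ancestor-cubic-virasoro}, is that $\widehat{S^{\sE}(q)}$ also commutes with $D_k$ and $\bar D_k$. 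For this one uses that the matrix \eqref{S-cubic} of $\log S^{\sE}(q)$ is supported on the single entry sending $\one$ to $\tfrac{q}{z}\omega$, so that its quadratic Hamiltonian \eqref{quadratic-calibration} consists only of the $(\tilde{q}_0^0)^2$-term and the $\tilde{q}^0_{k+1}\tilde{p}^0_k$-terms; in particular $\log S^{\sE}(q)$ has no $p_ap_b$-component, so by Lemma \ref{key-cocycle} all cocycle contributions vanish, and the residual bracket is a finite first-order computation that I expect to reduce to an identity among the Pochhammer coefficients in the definition of $D_k,\bar D_k$. Granting this, $D_k\cA^{\sE}_q=\bar D_k\cA^{\sE}_q=0$.

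Third, I would transfer to the FJRW side through the LG/CY correspondence of \cite{LSZ}. Under the bigrading-preserving isomorphism $\Psi$ of \eqref{cubic-isom}, which identifies the two sets of variables, the holomorphic Cayley transformation $\mathscr{C}^{\rm hol}_{\tau_*}$ acts only on the quasi-modular coefficients of the ancestor correlators and hence commutes with the differential operators $D_k,\bar D_k$, exactly as it commutes with $L_k$ in the proof of Theorem \ref{virasoro-cubic}; for the Fermat quartic and sextic the same argument applies verbatim, with a trivial Cayley transformation in the case that the relevant genus-one coefficient vanishes. Then \eqref{cayley-ancestor} yields $D_k\cA^{\rm FJRW}_{W_d,\<J\>}=D_k\,\mathscr{C}^{\rm hol}_{\tau_*}\!\big(\cA^{\sE_d}_q\big)=\mathscr{C}^{\rm hol}_{\tau_*}D_k\,\cA^{\sE_d}_q=0$, and identically for $\bar D_k$, which is the assertion.

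The step I expect to be the main obstacle is the commutation $[\widehat{S^{\sE}(q)},D_k]=[\widehat{S^{\sE}(q)},\bar D_k]=0$ in the second step. Unlike the Virasoro operators $L_k$, the operators $D_k,\bar D_k$ couple the $\one$-variables to the $\alpha$- and $\beta$-variables, so the bracket with $\log S^{\sE}(q)$ is not manifestly zero and one has to exploit the precise Pochhammer weights; I anticipate a telescoping cancellation of the same flavour as the identities used in Lemma \ref{quadratic-Lk}. If the direct commutation were to fail, a fallback is to carry the $\widehat{S^{\sE}(q)}$-conjugates of $D_k,\bar D_k$ --- which still annihilate $\cD^{\sE}$ --- through the Cayley transformation; but the close parallel with $L_k$ makes me expect plain commutation to hold.
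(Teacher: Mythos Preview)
Your three-step outline is exactly the paper's approach, and the input from \cite[Theorem 4]{OP} together with the Cayley-transformation step is handled just as you describe. The only place where you diverge from the paper is the commutation step, and there you are making it harder than it is.

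The paper does not route the commutation $[\widehat{S^{\sE}(q)},D_k]=0$ through the Hamiltonian/cocycle formalism at all. It simply writes out $(\log S(q))^{\wedge}$ explicitly from \eqref{quadratic-calibration}: after the dilaton shift this operator involves only the multiplication variables $t^0_\bullet$ and the derivatives $\partial/\partial t^1_\bullet$ (plus the scalar $(t_0^0)^2$-term). Now inspect $D_k$ and $\bar D_k$: neither contains a derivative $\partial/\partial t^0_\bullet$, and neither contains a multiplication by $t^1_\bullet$. Hence every term of $(\log S(q))^{\wedge}$ commutes with every term of $D_k$ and of $\bar D_k$, and $[(\log S(q))^{\wedge},D_k]=[(\log S(q))^{\wedge},\bar D_k]=0$ on the nose. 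No Pochhammer identity, no telescoping, no ``residual bracket'' is needed; the commutation is a one-line variable-bookkeeping check. Exponentiating gives $\widehat{S}(q)D_k\widehat{S}(q)^{-1}=D_k$ (and likewise for $\bar D_k$), and the rest of your argument goes through verbatim. Your fallback of carrying conjugated operators through the Cayley transformation is therefore unnecessary.
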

\begin{proof}
By \eqref{quadratic-calibration}, we have
\begin{equation}\label{quantization-calibration}
	(\log S(q)))^{\wedge}=-q{(\tilde{q}_0^0)^2\over 2}-q\cdot\sum_{k\geq 0} \tilde{q}^0_{k+1} {\partial \over \partial \tilde{q}^1_{k}}\,.
\end{equation}
Via dilaton shift $q^i_k=t^i_k-\delta^{i, 0}\delta_{k, 1}$, it is easy to check
\begin{align*}
	[(\log S(q))^{\wedge}, D_k]=[(\log S(q))^{\wedge}, \bar{D}_k]=0.
\end{align*}
Then
\begin{align*}
	\widehat{S}(q) e^{-\mathcal{F}^{\sE}_{1}(q)} D_k \, e^{\mathcal{F}^{\sE}_{1}(q)}\widehat{S}(q)^{-1}=\widehat{S}(q)D_k \,\widehat{S}(q)^{-1}=\exp({\rm Ad}_{(\log S(q))^{\wedge}})(D_k)=D_k.
\end{align*}
	The argument for $\bar{D}_k$ is similar. The remaining proof is the same as that of Theorem \ref{virasoro-cubic}.
\end{proof}

\subsection{Invertible CY polynomials of three variables}
\label{sec-simple-elliptic}
In this section, we discuss invertible Calabi-Yau polynomials of three variables.
According to \cite{MS}, up to permutations of variables, such a Calabi-Yau polynomial $W(x_1, x_2, x_3)$ must be one of the forms listed below.
\begin{table}[H]
\label{1dim-CY-polynomial}
\caption{Invertible CY polynomials of three variables}
\

\centering

\begin{tabular}{|c||l|l|l|}
  \hline
  $(q_1, q_2, q_3)$ & $({1\over 3},{1\over 3},{1\over 3})$ & $({1\over 4},{1\over 4},{1\over 2})$ &$({1\over 6},{1\over 3},{1\over 2})$\\
  \hline
  $W$ &  $E_6^{(1,1)}$ &    $E_7^{(1,1)}$  & $E_8^{(1,1)}$ \\
  \hline
  \hline
 Fermat & $x_1^3+x_2^3+x_3^3$ & $x_1^4+x_2^4+x_3^2$   & $x_1^6+x_2^3+x_3^2$ \\
    \hline
Chain & $x_1^2x_2+x_2^2x_3+x_3^3$ & $x_1^3x_2+x_2^2x_3+x_3^2$   &  \\
    \hline
Loop & $x_1^2x_2+x_2^2x_3+x_3^2x_1$ &  & \\
   \hline
Mixed & $x_1^3+x_2^2x_3+x_3^3;$ & $x_1^3x_2+x_2^4+x_3^2;$   & $x_1^4x_2+x_2^3+x_3^2;$ \\
&$x_1^2x_2+x_1x_2^2+x_3^3$& $x_1^4+x_2^2x_3+x_3^2;$   & $x_1^3+x_2^3x_3+x_3^2$ \\
 &  & $x_1^3x_2+x_2^3x_1+x_3^2$   &\\
  \hline
\end{tabular}
\end{table}


Here $(q_1, q_2, q_3)$ are the weights of the variables.
The notation $E_{\mu-2}^{(1,1)}$ is introduced by Saito for the {\em simple elliptic singularities} \cite{Sai-ses}, where $\mu$ is the Milnor number of the singularity.
In \cite{AGV}, they are denoted by $P_8, X_9$, and $J_{10}$.


For such a polynomial $W(x_1, x_2, x_3)$, and an arbitrary group $G$ such that $\<J\>< G < G_W$, using Theorem \ref{semi-simple-Virasoro} and Theorem \ref{virasoro-cubic}, the Virasoro Conjecture \ref{VC} for the LG pairs $(W(x_1, x_2, x_3), G)$ of CY type will follow from the conjecture below.
\begin{conjecture}
\label{simple-elliptic-conjecture}
Let $W$ be an invertible Calabi-Yau polynomial of three variables.
The LG A-model theory of an admissible LG pair $(W, G)$ is either generically semisimple, or equivalent to the LG A-model theory of a Fermat Calabi-Yau pair
$$(W=x_1^{a_1}+x_2^{a_2}+x_3^{a_3}, \<J_W\>).$$
\end{conjecture}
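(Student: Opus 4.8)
\noindent\emph{Outline of a possible approach.}
The plan is to verify the conjecture by a finite case analysis organised by Table~\ref{1dim-CY-polynomial}. Up to rescaling and permuting the variables there are only three simple elliptic singularities $E_6^{(1,1)},E_7^{(1,1)},E_8^{(1,1)}$, each with finitely many invertible presentations $W$, and for each $W$ the lattice of admissible $G$ with $\<J_W\>\leq G\leq G_W$ is finite. For every pair $(W,G)$ one first writes down $\cH_{W,G}$ with its bigrading and computes enough genus-zero three-point functions to pin down the Frobenius (super-)manifold of Section~\ref{sec-givental-thm}. A convenient bookkeeping device is the condition $G\leq{\rm SL}(3,\C)$: since $\sum_i q_i=1$ for Calabi--Yau $W$ we always have $\<J_W\>\leq{\rm SL}(3,\C)$, and for $G\leq{\rm SL}(3,\C)$ the class $d\bx|J^0\>$ is a nonzero element of $\cH_{W,G}$ of odd parity, whereas for $G\not\leq{\rm SL}(3,\C)$ this class is not $G$-invariant. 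This distinction governs which of the two alternatives in the conjecture should occur.

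Next I would dispose of the semisimple cases. When $G=G_W$ the polynomial is invertible and, by Theorem~\ref{thm-max} together with the mirror identification of \cite{HLSW,HPSV} with the Saito--Givental B-model of $W^T$ (whose miniversal deformation is generically of Morse type), the Frobenius manifold is generically semisimple, so Givental Theorem~\ref{semi-simple-Virasoro} applies. For an intermediate group $\<J_W\><G<G_W$ with $G\not\leq{\rm SL}(3,\C)$ I expect the Frobenius manifold to remain generically semisimple, and would prove it exactly as in Proposition~\ref{two-fermat-semisimple}: the narrow genus-zero invariants are controlled by the Concavity Axiom, the broad ones are determined by WDVV together with the selection and degree rules of Lemma~\ref{selection+degree}, one then assembles the quantum Euler vector field ${\bf E}({\bf v})$ and checks that $\det\!\big({\bf E}({\bf v})\star_{\bf v}\big)\not\equiv0$; by \cite[Theorem~3.4]{Abr} this gives generic semisimplicity, hence Virasoro by Givental Theorem~\ref{semi-simple-Virasoro}.

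It remains to treat the pairs with $G\leq{\rm SL}(3,\C)$. Here $d\bx|J^0\>$ is a nonzero odd element, and since every odd element of a super-commutative Frobenius algebra squares to zero, the Frobenius super-manifold is never semisimple and Givental's theorem is unavailable. The goal instead is to identify the LG A-model CohFT of $(W,G)$ with that of a Fermat Calabi--Yau pair $(x_1^{a_1}+x_2^{a_2}+x_3^{a_3},\<J\>)$ of the same simple-elliptic type, for which Virasoro is already known by Theorem~\ref{virasoro-cubic}. I would obtain this identification through the Landau--Ginzburg/Calabi--Yau correspondence: extending the holomorphic Cayley transformations of quasi-modular forms of \cite{LSZ,LLSZ}, each such $(W,G)$ should be governed, after the transformation, by the Gromov--Witten theory of the one-dimensional Calabi--Yau $X_{W,G}$ (an elliptic curve, possibly isogenous to $\sE$), which is exactly the theory governing the Fermat pair with minimal group; the Berglund--Henningson--Krawitz mirror $(W^T,G^T)$ of \cite{BHe,Kr} can serve as a guide. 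Because the relevant ancestor generating functions lie in $\C[f,f',f'']$ with $f$ solving a Chazy equation, they are determined by finitely many Taylor coefficients, so matching the two sides reduces to a finite computation; and once the A-model theories are identified, the Virasoro constraints transport since the Cayley transformation commutes with the operators $L_k$, exactly as in the proof of Theorem~\ref{virasoro-cubic}.

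The main obstacle is precisely this last step. For central charge one ($\widehat{c}_W=1$) the Landau--Ginzburg/Calabi--Yau correspondence is presently established only for the three Fermat presentations with minimal group; for the chain, loop and mixed presentations, and for the non-minimal subgroups of ${\rm SL}(3,\C)$, one needs new correspondences whose proofs require computing FJRW invariants in broad sectors, where the Concavity Axiom fails and one must fall back on WDVV, genus-zero reconstruction, and comparison with orbifold Gromov--Witten theory of elliptic curves. Verifying the quantum Euler vector computation for each non-${\rm SL}$ intermediate group, though a finite task, is also laborious. Until these inputs are in place the conjecture remains open; what is established unconditionally here is the Fermat case with minimal group (Theorem~\ref{virasoro-cubic}) and, via semisimplicity, the maximal-group and two-variable cases (Theorems~\ref{thm-max} and~\ref{general-pillow}).
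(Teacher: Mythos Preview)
Your outline and the paper's agree on the essential point: this is a conjecture, not a theorem, and both of you sketch a finite case-by-case strategy rather than a proof. Where you differ is in the organising principle. The paper classifies the admissible pairs $(W,G)$ by the isomorphism type of the bigraded superspace $\cH_{W,G}$, finding exactly five reference types of ranks $4,6,8,9,10$; the rank-$4$ type is the Fermat pair with minimal group (handled by Theorem~\ref{virasoro-cubic}), rank $6$ is the two-variable Fermat quartic with minimal group (Proposition~\ref{two-fermat-semisimple}), and ranks $8,9,10$ are the Fermat pairs with maximal group (Theorem~\ref{thm-max}). The remaining task is then to verify that each $(W,G)$ has the \emph{same CohFT} as its reference model, not merely the same state space. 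You instead split by the condition $G\leq{\rm SL}(3,\C)$, which is compatible with the paper's classification (your ${\rm SL}$ branch is precisely the rank-$4$ type), but you then propose to handle the non-semisimple branch by establishing new LG/CY correspondences and Cayley transformations for each non-Fermat presentation.

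That last step is where your route is more circuitous than necessary. The paper's suggestion is to compare two \emph{FJRW} theories directly---the given $(W,G)$ against the Fermat reference with the same state space---rather than to pass through Gromov--Witten theory of an elliptic curve. Once the state spaces are matched as bigraded superspaces with pairing, the comparison of CohFTs reduces (via the $\C[f,f',f'']$ structure and the Chazy equation you already invoke) to checking finitely many genus-zero invariants, and this can in principle be done entirely on the LG side without a new holomorphic Cayley transformation for each $W$. Your approach would also work, but it front-loads the hardest open input (LG/CY for non-Fermat simple-elliptic pairs), whereas the paper's reduces the problem to a direct A-model comparison that, while still requiring broad-sector computations, avoids building new analytic machinery.
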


This conjecture can be checked case-by-case in two steps.
\begin{enumerate}
\item  The bigraded superspace $\cH_{W,G}$ is isomorphic to one of the five cases:
$$
\begin{dcases}
\cH_{W, G_W}, & W=x_1^{a_1}+x_2^{a_2}+x_3^{a_3}, \quad (a_1, a_2, a_3)=(3, 3,3), (4,4,2), (6,3,2);\\
\cH_{W, \<J_W\>}, & W=x_1^{3}+x_2^{3}+x_3^{3};\\
\cH_{W, \<J_W\>}, & W=x_1^4+x_2^4.
\end{dcases}
$$

\item
In these cases,  the ranks of the state spaces are $4,6,8,9,10$ respectively.
The first three cases are special cases in Theorem \ref{thm-max}.
The fourth case is discussed in Theorem \ref{virasoro-cubic}.
The last case is a special case in Proposition \ref{general-pillow}.
\end{enumerate}

\begin{example}
For the quartic polynomial $W=x_1^4+x_2^4+x_3^2$, we consider the following choices of admissible groups
$$
\begin{tikzcd}[column sep=tiny]
& \<J_{x_1^4+x_2^4}\>\times \<J_{x_3^2}\> \ar[dr, hook] &\\
\<J_W\> \arrow[ur, hook] \arrow[dr, hook] & & G_W \\
& {\rm SL}(3, \C)  \arrow[ur, hook]  &
\end{tikzcd}
$$
Here are some observations:
\begin{itemize}
\item
The state spaces $\cH_{W, G}$ are isomorphic if
$G=\<J_W\>$ or $G={\rm SL}(3, \C)$.
\item
By \cite[Theorem 4.1.8 (8)]{FJR},  the FJRW theory of the pair $$(W, G=\<J_{x_1^4+x_2^4}\>\times \<J_{x_3^2}\>)$$ is isomorphic to the FJRW theory of the pair
$$(x_1^4+x_2^4, \<J_{x_1^4+x_2^4}\>).$$ 
\end{itemize}
\end{example}

We call $(W, G)$ a {\em pillowcase LG pair} if the orbifold curve $(W=0)/\widetilde{G}$ on the Calabi-Yau side is the {\em pillowcase} $\mathbb{P}^1_{2,2,2,2}$ \cite{EO}.
For example, the pair
$$(W=x_1^4+x_2^4+x_3^2, G=\<J_{x_1^4+x_2^4}\>\times \<J_{x_3^2}\>)$$ is a {\em pillowcase LG pair}.
For pillowcase LG pairs, a stronger version of Conjecture \ref{simple-elliptic-conjecture} would be the LG A-model theories of all pillowcase LG pairs have isomorphic generically semisimple CohFTs.
\appendix

\section{A super trace formula}
\label{sec-super} 
In this section, we prove Proposition \ref{conj-supertrace}.
Our proof is a minor modification of the elegant argument in \cite{ET}, where the cases of $G\subseteq {\rm SL}(n, \C)$ is proved.
The condition $G\subseteq {\rm SL}(n, \C)$ is only used to define the ``variance" ${\rm Var}_{(f, G)}$. The variance is equivalent to the super trace ${\rm Str}(\theta^2)$ if $G\subseteq {\rm SL}(n, \C)$. If we use ${\rm Str}(\theta^2)$, the condition $G\subseteq {\rm SL}(n, \C)$ can be removed. We will explore their idea here.

Firstly, fix $W$, we define the Poincar\'e series of $\Jac(W)\cdot d\bx$ as following:
\begin{equation}\label{poincare}
	P_{W}(y):=\sum_{\alpha\in \Jac(W)\cdot \Omega}(-1)^n \cdot y^{{\rm wt}(\alpha)}
\end{equation}
Recall $q_i$ is the weight of $x_i$ in $W$. The following is a standard formula due to A. G. Kouchnirenko \cite{Ko} and  J. Steenbrink \cite{St},
\begin{lemma}
The Poincar\'e series is
\begin{equation}\label{Poinare for}
	P_{W}(y)=\prod_{i=1}^n\frac{y-y^{q_i}}{1-y^{q_i}}.
\end{equation}
\end{lemma}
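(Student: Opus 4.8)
The plan is to identify $P_W(y)$, up to a weight shift and a global sign, with the graded Hilbert series of the Jacobian ring, and then compute that Hilbert series with a Koszul resolution. Grade $S:=\C[x_1,\dots,x_n]$ by $\mathrm{wt}(x_i):=q_i$; after clearing denominators this is a $\tfrac1N\Z_{\ge 0}$-grading, so all series below are taken in $\C[[y^{1/N}]]$. Since $W$ is homogeneous of weight $1$, each $\partial W/\partial x_i$ is homogeneous of weight $1-q_i$, and $\Jac(W)=S/(\partial_1W,\dots,\partial_nW)$ is a graded Artinian ring (Artinian because non-degeneracy forces $V(\partial_1W,\dots,\partial_nW)=\{0\}$). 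A homogeneous monomial representative $\alpha=[\bx^{\mathbf m}]\,d\bx$ has $\mathrm{wt}(\alpha)=\sum_i(m_i+1)q_i=\big(\sum_i m_iq_i\big)+\sum_i q_i$, and multiplication by $d\bx$ is a weight-shifting bijection from a monomial basis of $\Jac(W)$ onto one of $\Jac(W)\cdot d\bx$; hence
\[
P_W(y)=(-1)^n\,y^{\sum_i q_i}\,H_{\Jac(W)}(y),\qquad H_{\Jac(W)}(y):=\sum_{d}\dim_{\C}\big(\Jac(W)_d\big)\,y^{d}.
\]

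Next I would compute $H_{\Jac(W)}(y)$. The essential point is that non-degeneracy means the $n$ forms $\partial_1W,\dots,\partial_nW$ cut out the single point $\{0\}$ in affine $n$-space $\C^n$, so they form a regular sequence in the Cohen--Macaulay ring $S$ (a system of $n$ homogeneous elements of codimension $n$ in an $n$-dimensional polynomial ring is automatically regular). Therefore the Koszul complex on $(\partial_1W,\dots,\partial_nW)$, namely $K_\bullet=\Lambda^\bullet\!\big(\bigoplus_{i=1}^n S(-(1-q_i))\big)$, is a finite graded free resolution of $\Jac(W)$ over $S$. Taking the alternating sum of Hilbert series along this exact complex (each term is a finitely generated graded free module and the resolution has length $n$, so this is valid), and using $H_S(y)=\prod_i(1-y^{q_i})^{-1}$ together with the identity $\sum_{p}(-1)^p\sum_{|I|=p}y^{\sum_{i\in I}(1-q_i)}=\prod_{i=1}^n(1-y^{1-q_i})$, gives
\[
H_{\Jac(W)}(y)=\prod_{i=1}^n\frac{1-y^{1-q_i}}{1-y^{q_i}}.
\]

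It remains to combine the two displays and simplify. Substituting and distributing $y^{\sum_i q_i}=\prod_i y^{q_i}$,
\[
P_W(y)=(-1)^n\prod_{i=1}^n\frac{y^{q_i}\,(1-y^{1-q_i})}{1-y^{q_i}}=(-1)^n\prod_{i=1}^n\frac{y^{q_i}-y}{1-y^{q_i}}=(-1)^n(-1)^n\prod_{i=1}^n\frac{y-y^{q_i}}{1-y^{q_i}},
\]
which is the claimed formula. The computation is essentially classical; the two steps needing a little care are the regular-sequence claim (which is exactly where non-degeneracy of $W$ enters) and the bookkeeping of the $d\bx$-shift by $\sum_i q_i$ together with the per-term sign $(-1)^n$ in \eqref{poincare}. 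I do not anticipate a genuine obstacle here; alternatively one may simply invoke the Kouchnirenko--Steenbrink formula for the Milnor algebra of a quasi-homogeneous isolated singularity, which is precisely this statement.
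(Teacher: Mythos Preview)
Your proof is correct and follows essentially the same route as the paper: both arguments compute the graded Hilbert series via the Koszul complex on the partials $\partial_1 W,\dots,\partial_n W$, using non-degeneracy of $W$ to guarantee exactness (equivalently, that the partials form a regular sequence). The only cosmetic difference is that the paper works directly with the Koszul complex on differential forms $K_j=\bigoplus_{|I|=j}A\,dx_I$ with differential $dW\wedge\,$, whereas you first strip off the $d\bx$-shift and the sign $(-1)^n$ and then quote the standard Koszul resolution of $\Jac(W)$; the bookkeeping is identical.
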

\begin{proof}
Let $A:=\C[x_1,\ldots, x_n],$ and
$$K_j:= \bigoplus_{|I|=j} A\cdot dx_I$$
be the $A$-module consist of $j$-differential form in $\C^n$.
We consider the following Koszul complex:
\begin{equation}\label{Koszul}
	0\rightarrow K_0\xrightarrow{d_1} K_1\xrightarrow{d_2} \cdots \xrightarrow{d_n} K_n\xrightarrow{\pi} \Jac(W)\cdot d\bx \rightarrow 0
\end{equation}
where $\pi$ is the projection, $d_i$ is the differential defined by
$$d_i:=\sum_{i=1}^n \frac{\partial W}{\partial x_i}\cdot dx_i\wedge.$$
Recall that $W$ is a non-degenerate polynomial, with isolated critical points only at the origin, so the complex \eqref{Koszul} is exact. Furthermore, assign $q_i$ as the degree of $x_i$ and $dx_i$, we know $d_j$ is an operator of degree 1 for all $j$.
Similar as \eqref{poincare}, we define the Poincar\'e series of $A$ and $K_j$:
\begin{align*}
P_A(y)&=\prod_{i=1}^n\frac{1}{y^{q_i}-1},
\quad \\
P_{K_j}(y)&=P_A(y)\cdot \sum_{|I|=j}\prod_{i\in I}y^{q_i}.
\end{align*}
Via the exactness of \eqref{Koszul} and the fact that $d_i$ is degree 1, we have
\begin{align*}
	P_{W}(y)&=P_{K_n}(y)-y\cdot P_{K_{n-1}}(y)+\cdots (-1)^ny^n P_{K_0}(y)\\
	&=\prod_{i=1}^n\frac{y-y^{q_i}}{1-y^{q_i}}.
\end{align*}
\end{proof}

Now let $(W, G)$ be an admissible LG pair.
For $g\in G$, we write its diagonal action on $\C^n$ by
$$g: (x_1, \ldots, x_n )\mapsto (\lambda_1(g) x_1, \ldots, \lambda_n(g)x_n ).$$
It induces a $g$-action on each $\alpha\in \Jac(W)\cdot d\bx$, denoted by
$$g\cdot \alpha =\rho_g (\alpha) \cdot \alpha.$$
We define the Poincar\'e series of $\Jac(W)\cdot d\bx$ coupled with $g$ as
\begin{equation}
	P_{W, g}(y):=\sum_{\alpha\in \Jac(W)\cdot d\bx}(-1)^n \rho_g (\alpha)\cdot y^{{\rm wt}(\alpha)}.
\end{equation}
Notice that $g$ also acts on $A$ and $K_j$ naturally, and $d_j$ is $g$-invariants as $W$ is $g$-invariant. Via a similar argument as above, we have
\begin{align*}
	P_{A, g}(y)&=\prod_{i=1}^n\frac{1}{\lambda_i(g)y^{q_i}-1}, \quad\\
	P_{W, g}(y)&=\prod_{i=1}^n\frac{y-\lambda_i(g)y^{q_i}}{1-\lambda_i(g)y^{q_i}}.
\end{align*}
Then summing over all $g\in G$, the following formula for the Poincar\'e series of the $G$-invariant part of $\Jac(W)\cdot d\bx$ holds \cite[Theorem 6]{ET}:
\begin{align}
P_{W, G}(y):=\sum_{\alpha\in (\Jac(W)\cdot d\bx)^G}(-1)^n \cdot y^{{\rm wt}(\alpha)}
={1\over |G|}\sum_{g\in G}\prod_{i=1}^n\frac{y-\lambda_i(g)y^{q_i}}{1-\lambda_i(g)y^{q_i}}.\label{G-Poinare}
\end{align}


We define its Poincar\'e series by
\begin{equation*}
	P_{(W, G)}(y):=\sum_{\phi_a\in \cH_{W, G}}(-1)^{|\phi_a|} \cdot y^{\mu^+_a}.
\end{equation*}
By definition of $\cH_{W, G}$, the bigrading and parity, we have
\begin{align}\label{G-P}
		P_{(W, G)}(y)&=\sum_{\gamma\in G}y^{\iota_\gamma-{\widehat{c}_W\over 2}}\cdot P_{W_\gamma, G}(y)\\
		&=\sum_{\gamma\in G}y^{{\rm age}(\gamma)-{n\over 2}}\cdot {1\over |G|}\sum_{g\in G}\prod_{\substack{1\leq  i\leq n,\\ \lambda_i(\gamma)=1}}\frac{y-\lambda_i(g)y^{q_i}}{1-\lambda_i(g)y^{q_i}}\\
		&=\sum_{\gamma\in G}y^{{\rm age}(\gamma)-{n-N_\gamma\over 2}}\cdot {1\over |G|}\sum_{g\in G}\prod_{\substack{1\leq  i\leq n, \\ \lambda_i(\gamma)=1}}\frac{y^{1\over 2}-\lambda_i(g)y^{q_i-{1\over 2}}}{1-\lambda_i(g)y^{q_i}}\nonumber
\end{align}
Notice that in the above equation, we use \eqref{G-Poinare} and the fact that for all $\gamma\in G$, $W_\gamma$ is still a a non-degenerate quasi-homogeneous polynomial 
\cite[Lemma 2.1.10]{FJR} \cite[Proposition 5]{ET}.
It is easily to see
\begin{lemma}
Let $(W, G)$ be an admissible LG pair, we have
\begin{align*}
\chi_{W,G}&=\lim_{y\rightarrow 1}P_{(W, G)}(y), \quad \\
{\rm Str}(\theta^2)&=\lim_{y\rightarrow 1}{d\over dy}\left(y{d\over dy}P_{(W, G)}(y)\right).
\end{align*}
\end{lemma}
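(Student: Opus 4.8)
This lemma is a bookkeeping fact about the generating function $P_{(W,G)}(y)$, and the plan is to read it off directly from the definition $P_{(W,G)}(y)=\sum_{\phi_a\in\cH_{W,G}}(-1)^{|\phi_a|}y^{\mu_a^+}$ rather than from the closed product form. The first step is to note that $\cH_{W,G}$ is finite-dimensional and that every $\mu_a^+$ is rational --- it is a $\mathbb{Q}$-linear combination of the weights $q_i$ by the formula \eqref{bigrading} --- so $P_{(W,G)}(y)$ is a finite sum of monomials in a fixed fractional power $y^{1/N}$. Hence it is a real-analytic function of $y$ on $(0,\infty)$, and the limits $y\to 1$ occurring in the statement are simply evaluations at $y=1$.

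For the first identity, evaluating at $y=1$ sends each monomial $y^{\mu_a^+}$ to $1$, so $P_{(W,G)}(1)=\sum_{\phi_a\in\cH_{W,G}}(-1)^{|\phi_a|}=\chi_{W,G}$ by the definition \eqref{euler-char} of the Euler characteristic together with the parity \eqref{parity}. For the second identity, I would use that $y\tfrac{d}{dy}$ is the infinitesimal scaling operator: it multiplies $y^{\mu_a^+}$ by $\mu_a^+$; applying $\tfrac{d}{dy}$ afterwards multiplies by $\mu_a^+$ a second time (up to the harmless shift $y^{\mu_a^+}\mapsto y^{\mu_a^+-1}$ which is invisible at $y=1$), so $\lim_{y\to 1}\tfrac{d}{dy}\big(y\tfrac{d}{dy}P_{(W,G)}(y)\big)=\sum_{\phi_a}(-1)^{|\phi_a|}(\mu_a^+)^2$. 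Since by \eqref{hodge-op} the Hodge operator $\theta$ acts on $\phi_a$ (in $z$-degree $0$) by $\mu_a^+$, one has $\theta^2\phi_a=(\mu_a^+)^2\phi_a$, and this last sum is exactly the parity-weighted trace ${\rm Str}(\theta^2)$ appearing in \eqref{super-trace}, which finishes the argument.

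The lemma itself presents no real obstacle; the genuine content lies downstream, which is also why it is phrased with $\lim_{y\to 1}$ rather than a bare evaluation. In the intended use one substitutes the closed product expression \eqref{G-P}, whose factors $\tfrac{y-\lambda_i(g)y^{q_i}}{1-\lambda_i(g)y^{q_i}}$ acquire apparent poles at $y=1$ whenever $\lambda_i(g)=1$; there the limit is genuine and must be computed, e.g. by L'Hôpital ($\lim_{y\to1}\tfrac{y-y^{q_i}}{1-y^{q_i}}=\tfrac{q_i-1}{q_i}$), and it is precisely the cancellation of these poles against the zeros produced by the other factors that makes $P_{(W,G)}$ regular at $y=1$. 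That computation, following the argument of Ebeling--Takahashi \cite{ET}, is the quantitative step leading to the supertrace formula \eqref{hodge-rr}; the present lemma only packages $\chi_{W,G}$ and ${\rm Str}(\theta^2)$ as the value and second moment of the same generating function so that this computation can be applied uniformly.
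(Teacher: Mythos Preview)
Your proposal is correct and matches the paper's approach exactly: the paper declares the lemma ``easy to see'' without further argument, and the direct term-by-term verification from the definition $P_{(W,G)}(y)=\sum_{\phi_a}(-1)^{|\phi_a|}y^{\mu_a^+}$ that you carry out is precisely what is intended. Your additional remark about why the statement is phrased with limits (anticipating the product formula \eqref{G-P} with its apparent poles) is accurate context, though not needed for the lemma itself.
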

Now it suffices to prove:
\begin{proposition}
Let $(W, G)$ be an admissible LG pair, we have
	$$\lim_{y\rightarrow 1}{d\over dy}\left(y{d\over dy}P_{(W, G)}(y)\right)={\widehat{c}_W\over 12}\cdot \lim_{y\rightarrow 1}P_{(W, G)}(y).$$
\end{proposition}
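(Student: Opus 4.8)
The plan is to reduce the identity
$\lim_{y\to1}\tfrac{d}{dy}\big(y\tfrac{d}{dy}P_{(W,G)}(y)\big)=\tfrac{\widehat c_W}{12}\lim_{y\to1}P_{(W,G)}(y)$
to a statement about the behavior near $y=1$ of the single-group-element factors appearing in \eqref{G-P}. I would first substitute $y=e^{t}$ and study the Laurent expansion at $t=0$; under this substitution $y\tfrac{d}{dy}=\tfrac{d}{dt}$, and the proposition becomes a statement comparing the $t^2$-coefficient of the Taylor expansion of $P_{(W,G)}(e^t)$ (after clearing its pole, if any) with $\widehat c_W/12$ times the $t^0$-coefficient. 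Since the parity sign $(-1)^{|\phi_a|}=(-1)^{N_\gamma}$ is built into $P_{(W,G)}$, the sum $P_{(W,G)}(e^t)$ is actually regular at $t=0$ (this is exactly why the alternating Koszul sum produces the numerator $y-\lambda_i(g)y^{q_i}$, which vanishes to order $1$ at $y=1$ when $\lambda_i(g)=1$, cancelling the pole from the denominator). So the first key step is to check $P_{(W,G)}(e^t)$ is holomorphic at $t=0$ and to name the coefficients $P_{(W,G)}(e^t)=c_0+c_1 t+c_2 t^2+\cdots$; the claim is then $2c_2 = \tfrac{\widehat c_W}{12}c_0$ together with (implicitly) $c_1=0$, which will follow from a symmetry $y\leftrightarrow y^{-1}$ argument below.

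The second step is the local analysis of a single term. Fix $\gamma,g\in G$ and consider
$$F_{\gamma,g}(y):=y^{{\rm age}(\gamma)-\frac{n-N_\gamma}{2}}\prod_{\substack{1\le i\le n\\ \lambda_i(\gamma)=1}}\frac{y^{1/2}-\lambda_i(g)y^{q_i-1/2}}{1-\lambda_i(g)y^{q_i}}.$$
For each factor with $\lambda_i(g)=1$, the factor degenerates to $\frac{y^{1/2}-y^{q_i-1/2}}{1-y^{q_i}}$; writing $y=e^t$ and expanding, one finds
$\frac{e^{t/2}-e^{(q_i-1/2)t}}{1-e^{q_i t}}=\big(q_i-\tfrac12\big)\cdot\big(-\tfrac1{q_i}\big)\cdot\big(1+\alpha_i t+\beta_i t^2+\cdots\big)$,
so this factor contributes a definite constant (essentially $\tfrac{1-2q_i}{2}$) plus an explicit $t^2$-correction involving $q_i^2$; I would record these two Taylor coefficients. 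For factors with $\lambda_i(g)\ne 1$, the factor is already regular and nonzero at $t=0$ and contributes a unit times $\exp$ of something linear in $t$ plus quadratic; the crucial bookkeeping is that the product over all $i,\gamma,g$ of the \emph{linear}-in-$t$ parts must sum to zero after summing over $\gamma,g$, and the sum of the \emph{quadratic} parts, divided by the constant part, must equal $\widehat c_W/24$. The cleanest way to organize this is Euler's trick: note $P_{(W,G)}(y^{-1})=P_{(W,G)}(y)$ — this is the statement that the state space is symmetric under $\mu^+\mapsto -\mu^+$ (Lemma \ref{pairing}, i.e. nondegeneracy of the pairing sends $\phi_a\mapsto\phi_b$ with $\mu_a^++\mu_b^+=0$, and parity is preserved). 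Hence $P_{(W,G)}(e^t)$ is an even function of $t$, which immediately kills $c_1$ and forces all odd Taylor coefficients to vanish, so only $c_0$ and $c_2$ survive up to order $2$.

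The third step extracts $c_2$ itself. Having the symmetry, write $P_{(W,G)}(e^t)=c_0+c_2t^2+O(t^4)$ and differentiate the defining product logarithmically: $\tfrac{d^2}{dt^2}\log P$ at $t=0$ equals $2c_2/c_0$. Using $P_{(W,G)}(e^{t})=\sum_{\gamma}\tfrac1{|G|}\sum_g F_{\gamma,g}(e^t)$ does not factor, so instead I would use the generating identity more cleverly: by the unrefined Kouchnirenko formula \eqref{Poinare for} applied to each $W_\gamma$ together with its $G$-average \eqref{G-Poinare}, the whole $P_{(W,G)}(y)$ can be massaged into a sum of products each of which, near $y=1$, has the shape $\prod_i\big(\tfrac{1-2q_i^{(\gamma)}}{2}\big)\cdot(1+\text{quadratic})$ where the quadratic term's coefficient is a universal combination $\tfrac1{24}\sum_i\big((1-2q_i)^2-1\big)$ ... matched against $\widehat c_W/24 = \tfrac1{24}\sum_i(1-2q_i)$. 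The reconciliation of these two expressions — which requires that the ``$-1$'' corrections and the cross terms telescope after summing over $\gamma$ and $g$ and using the numerology relating ${\rm age}(\gamma)$, $N_\gamma$, and $\widehat c_W$ — is the main obstacle, and it is precisely the combinatorial heart of Ebeling–Takahashi's argument in \cite[Theorem 19]{ET} that I would adapt, replacing their $G\subseteq{\rm SL}(n,\C)$ hypothesis (used only to make ${\rm age}(\gamma)\in\Z$ so the variance is literally ${\rm Str}(\theta^2)$) by working directly with ${\rm Str}(\theta^2)$ and the prefactor $y^{{\rm age}(\gamma)-(n-N_\gamma)/2}$, whose own Taylor expansion at $y=1$ contributes the extra $\big({\rm age}(\gamma)-\tfrac{n-N_\gamma}{2}\big)^2 t^2$ term that the ${\rm SL}$ case handled automatically. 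I expect the bulk of the work to be verifying that this modified prefactor contribution combines correctly with the product over fixed directions; once that identity is in hand, taking $y\to1$ (equivalently $t\to0$) gives the claimed proportionality constant $\widehat c_W/12$ after accounting for the factor of $2$ from $\tfrac{d}{dy}\big(y\tfrac{d}{dy}\big)$ versus $\tfrac{d^2}{dt^2}$.
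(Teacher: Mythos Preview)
Your proposal takes essentially the same approach as the paper: both reduce the identity to Ebeling--Takahashi \cite[Theorem~19]{ET} and observe that the ${\rm SL}(n,\C)$ hypothesis there is inessential once one works directly with ${\rm Str}(\theta^2)$ and the Poincar\'e series $P_{(W,G)}(y)$ (which the paper identifies with $(-1)^n\chi(W,G)(y)$ of \cite{ET}). Your write-up is in fact more detailed than the paper's, which simply cites \cite{ET} and omits the local analysis; your substitution $y=e^t$, the evenness argument via Lemma~\ref{pairing}, and your isolation of the prefactor $y^{{\rm age}(\gamma)-(n-N_\gamma)/2}$ as the place where the non-${\rm SL}$ modification enters are all correct and helpful orientation, but the combinatorial core you defer to \cite{ET} is exactly what the paper defers to as well.
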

In fact, from \eqref{G-P}, we know $P_{(W,G)}(y)$ here coincides with $(-1)^n\chi(W, G)(y)$ defined in \cite{ET}, then the above Proposition is exactly \cite[Theorem 19]{ET}.
We omit the detail here, only mention that the proof there works for all $G\subseteq G_W$.

\section{Genus 0 modified Virasoro constraints}
As we see in Section \ref{A grading equation and the operator $L_0$}, an obstruction in the proof of $L_0$-constraints is the grading assumption \ref{alge cycle}, of which the proof is lacked. However, in FJRW theory or KL theory, the following degree constrain holds :
\begin{lemma}\label{selection rule}
For $\clubsuit=$FJRW or KL, the quantum invariant
   $$\LD\prod_{i=1}^{k}\tau_{\ell_i}(\phi_{i})\RD^{\clubsuit, W, G}_{g, k}\neq 0$$
   only if
   $$\sum_{i=1}^k\left({\mu_{i}^{+}+\mu_{i}^{-}\over 2}+{\widehat{c}_W\over 2}+\ell_i\right)=(3-\widehat{c}_W)(g-1)+k.
   $$
\end{lemma}
\begin{proof}
$2\deg_{\C}\phi_i$ is the cohomological degree(after degree shift) of  Lefschetz thimbles in FJRW theory \cite{FJR}, and that of intersection homology in KL theory \cite{KL}. This lemma is a corollary of \eqref{complex-degree} and  \eqref{deg-fjrw}.
\end{proof}
Now we modify the genus 0 constraints as following(just replace $\mu_{i}^+$ with ${\mu_{i}^{+}+\mu_{i}^{-}\over 2}$ in Definition \ref{def-virasoro}, and restrict it to genus 0 case)
\begin{definition}
\label{def-virasoro}
For each integer $k\in\mathbb{Z}_{\geq -1}$, we introduce a differential operator
\begin{align}
L^M_k:=&-\left(\frac{3-\widehat{c}_W}{2}\right)_{k+1} {\pop t_{k+1}^{0}}\nonumber\\
&+\sum_{m=0}^\infty\left({\mu_{a}^{+}+\mu_{a}^{-}\over 2}+m+\frac{1}{2}\right)_{k+1} t^a_m{\pop t_{m+k}^{a}}\nonumber\\
&+\frac{\hbar^2}{2}\sum_{m=-k}^{-1}(-1)^m\left({\mu_{a}^{+}+\mu_{a}^{-}\over 2}+m+\frac{1}{2}\right)_{k+1}  \eta^{ab}{\pop t_{-m-1}^{a}}{\pop t_{m+k}^{b}}\nonumber\\
&+\frac{1}{2\hbar^2}\delta_{-1,k}\eta_{ab}t_0^at_0^b\nonumber\\
&-{\delta_{0, k}\over 4}\sum_{a}(-1)^{|\phi_a|}({\mu_{a}^{+}+\mu_{a}^{-}\over 2}-{1\over 2})({\mu_{a}^{+}+\mu_{a}^{-}\over 2}+{1\over 2}).\nonumber
\end{align}
\end{definition}
Define the genus 0 potential as
\begin{equation}
\mathcal{F}_0^{\clubsuit}(\mathbf{t})
:=\sum_{k}{1\over k!}\LD\prod_{i=1}^{k}\bt(\psi_i)
\RD_{0,k}^{\clubsuit, (W, G)}.
\end{equation}
One can show that $L^M_0\exp\left({\hbar^{-2}\mathcal{F}_0^{\rm FJRW, KL}}\right)\in \C[[\hbar]]\cdot\exp\left({\hbar^{-2}\mathcal{F}_0^{\rm FJRW, KL}}\right)$ via the proof in Section \ref{A grading equation and the operator $L_0$}. Furthermore, via the same argument in \cite{LT} and \cite{Ge} , we have
\begin{theorem}
\begin{equation}
L^M_k\exp\left({\hbar^{-2}\mathcal{F}_0^{\rm FJRW, KL}}\right)\in \C[[\hbar]]\cdot\exp\left({\hbar^{-2}\mathcal{F}_0^{\rm FJRW, KL}}\right), \quad k\geq-1.
\end{equation}
\end{theorem}

  Since the coefficients of $\hbar^{-2}$ in $L^M_k\exp\left({\hbar^{-2}\mathcal{F}_0^{\rm FJRW, KL}}\right)/\exp\left({\hbar^{-2}\mathcal{F}_0^{\rm FJRW, KL}}\right)$ vanishes, one can obtain a series of recursive relations for genus 0 invariants.

\begin{remark}
In general, one can check $L^M_k$ can not be a constraints of $\mathcal{A}^{\clubsuit}_{W,G}$ for higher genus. In fact, there does not exist a constant $c$ such that $L^M_0+c$ behave well, which means that, $(L^M_0+c)\mathcal{A}^{\clubsuit}_{W,G}=0$ and Virasoro relation \eqref{Virasoro-relation} holds simultaneously. This is because of the absence of supertrace formula \eqref{hodge-rr} in this case.  See \cite[Section 2.10]{Ge} for this issue for Gromov-Witten theory.
\end{remark}

\vskip .2in
\noindent


\vskip .2in

\noindent{\small Department of Mathematics, Sun Yat-sen University, Guangzhou, Guangdong 510275,
China}

\noindent{\small E-mail: hewq@mail2.sysu.edu.cn}

\vskip .1in

\noindent{\small Department of Mathematics, University of Oregon, Eugene, OR 97403,
USA}

\noindent{\small E-mail: yfshen@uoregon.edu}





\begin{thebibliography}{99}
\bibitem{Abr} L. Abrams, {\em The quantum Euler class and the quantum cohomology of the Grassmannians,} Israel J. Math. 117 (2000) 335--352.

\bibitem{AGV}
V. I. Arnold,  S. M. Gusein-Zade, A. N. Varchenko,
{\em Singularities of Differentiable Maps.}
Vol. I. Monogr. Math. 82, Birkh\"auser Boston, Boston, MA (1985)

\bibitem{B}
K. Behrend,
{\em Gromov-Witten invariants in algebraic geometry.}
 Invent. Math. 127, 601--617 (1997)

 \bibitem{BF}
K. Behrend, B. Fantechi,
 {\em The intrinsic normal cone.}
 Invent. Math. 128, 45--88 (1997)

\bibitem{BHe}
P. Berglund, M. Henningson,
{\em Landau-Ginzburg orbifolds, mirror symmetry and the elliptic genus.}
Nuclear Phys. B 433 (1995), no. 2, 311--332.

\bibitem{BH}
P. Berglund, T. H\"ubsch,
{\em A generalized construction of mirror manifolds.}
Nuclear Phys. B 393 (1993), no. 1-2, 377--391.

\bibitem{CKL}
H-L. Chang, Y-H. Kiem, J. Li,
{\em Algebraic virtual cycles for quantum singularity theories.}
arXiv:1806.00216 [math.AG].
To appear in Comm. in Analysis and Geometry.

\bibitem{CLL}
H-L. Chang, J. Li, W-P. Li,
{\em Witten's top Chern class via cosection localization.}
Invent. Math. 200 (2015), no. 3, 1015--1063.

\bibitem{CIR}
A. Chiodo, H. Iritani, Y. Ruan,
{\em Landau-Ginzburg/Calabi-Yau correspondence, global mirror symmetry and Orlov equivalence.}
Publications math\'ematiques de l'IH\'ES volume 119, 127--216(2014)

\bibitem{CR}
A. Chiodo, Y. Ruan,
{\em Landau-Ginzburg/Calabi-Yau correspondence for quintic three-folds via symplectic transformations,}
Invent. Math. 182 (2010), no. 1, 117--165.



\bibitem{CGT}
T. Coates, A. Givental, H-H. Tseng,
{\em Virasoro Constraints for Toric Bundles.}
arXiv:1508.06282 [math.AG]

\bibitem{CPS}
E. Clader, N. Priddis, M. Shoemaker,
{\em Geometric quantization with applications to Gromov-Witten theory.}
B-model Gromov-Witten theory, 399--462, Trends Math., Birkh\"auser/Springer, Cham, 2018.





\bibitem{Du}
B. Dubrovin,
{\em Geometry of 2D topological field theories.}
Integrable systems and quantum groups (Montecatini Terme, 1993), 120--348, Lecture Notes in Math., 1620, Fond. CIME/CIME Found. Subser., Springer, Berlin, 1996.

\bibitem{DZ}
B. Dubrovin, Y.  Zhang,
{\em Frobenius manifolds and Virasoro constraints.}
Selecta Math. (N.S.) 5 (1999), no. 4, 423--466.


\bibitem{ET}
W. Ebeling, A. Takahashi,
{\em Variance of the exponents of orbifold Landau-Ginzburg models.}
 Math. Res. Lett. 20 (2013), no. 1, 51--65.

 \bibitem{EHX}
T. Eguchi, K. Hori, C-S. Xiong,
 {\em Quantum cohomology and Virasoro algebra.}
 Phys. Lett. B 402 (1997), no. 1-2, 71--80.

\bibitem{EJX}
T. Eguchi, M. Jinzenji, C-S. Xiong,
{\em Quantum cohomology and free-field representation.}
Nuclear Phys. B 510 (1998), no. 3, 608--622.

\bibitem{EO}
A. Eskin,  A. Okounkov,
{\em Pillowcases and quasimodular forms,}
Algebraic geometry and number theory, 1--25, Progr. Math., 253, Birkh\"auser Boston, Boston, MA, 2006.



\bibitem{FJR1}
H. Fan, T. Jarvis, Y. Ruan,
{\em The Witten equation and its virtual fundamental cycle.}
 arXiv:0712.4025 (2007).

\bibitem{FJR}
H. Fan, T. Jarvis, Y. Ruan,
{\em The Witten equation, mirror symmetry and quantum singularity theory.}
 Ann. of Math. (2) 178 (2013), no. 1, 1--106.




\bibitem{Francis}
A. Francis,
{\em Computational techniques in FJRW theory with applications to Landau-Ginzburg mirror symmetry.}
 Adv. Theor. Math. Phys. 19 (2015), no. 6, 1339--1383.


\bibitem{Ge}
E. Getzler,
{\em The Virasoro conjecture for Gromov-Witten invariants.}
 Algebraic geometry: Hirzebruch 70 (Warsaw, 1998), 147--176, Contemp. Math., 241, Amer. Math. Soc., Providence, RI, 1999.

\bibitem{GP}
E. Getzler, R. Pandharipande,
{\em Virasoro constraints and the Chern classes of the Hodge bundle.}
 Nuclear Phys. B 530 (1998), no. 3, 701--714.


\bibitem{G-ham}
A. Givental,
{\em Gromov-Witten invariants and quantization of quadratic Hamiltonians.}
 Dedicated to the memory of I. G. Petrovskii on the occasion of his 100th anniversary. Mosc. Math. J. 1 (2001), no. 4, 551--568, 645.

\bibitem{G-higher}
A. Givental,
{\em semi-simple Frobenius structures at higher genus,}
Internat. Math. Res. Notices 23 (2001), 1265--1286.

\bibitem{Gue}
J. Gu\'er\'e,
{\em A Landau-Ginzburg mirror theorem without concavity.}
Duke Math. J. 165 (2016), 2461--2527.

\bibitem{HLSW}
W. He, S. Li, Y. Shen, R. Webb,
{\em Landau-Ginzburg mirror symmetry conjecture,}
arXiv:1603.02660. 

\bibitem{HPSV}
W. He, A. Polishchuk, Y. Shen, A. Vaintrob,
{\em A Landau-Ginzburg mirror theorem via matrix factorizations,}
arXiv:2001.00536 [math.AG]

\bibitem{IV}
K. Intriligator, C. Vafa,
{\em Landau-Ginzburg orbifolds.}
Nuclear Phys. B 339 (1990), no. 1, 95--120.

\bibitem{JT}
Y. Jiang, H-H. Tseng,
{\em On Virasoro constraints for orbifold Gromov-Witten theory.}
Int. Math. Res. Not. IMRN 2010, no. 4, 756--781.

\bibitem{KZ}
M. Kaneko, D. Zagier,
{\em A generalized Jacobi theta function and quasimodular forms.}
The moduli space of curves (Texel Island, 1994), 165--172, Progr. Math., 129, Birkh\"auser Boston, Boston, MA, 1995.

\bibitem{Katz}
 S. Katz, unpublished, March 1997.




\bibitem{KL}
Y-H, Kiem, J. Li,
{\em Quantum singularity theory via cosection localization,}
arXiv:1806.00116 [math.AG]
 To appear in Crelle's journal.

\bibitem{Kon}
M. Kontsevich,
{\em Intersection theory on the moduli space of curves and the matrix Airy function.}
Comm. Math. Phys. 147 (1992), no. 1, 1--23.


\bibitem{KM}
M. Kontsevich, Y. Manin,
{\em Gromov-Witten classes, quantum cohomology, and enumerative geometry,}
Comm. Math. Phys. 164 (1994), 525--562.

\bibitem{KM98}
M. Kontsevich, Y. Manin,
{\em Relations between the correlators of the topological sigma-model coupled to gravity.}
Comm. Math. Phys. 196 (1998), no. 2, 385--398.

\bibitem{Ko}
A. G. Kouchnirenko,
{\em Polyedres de Newton et nombres de Milnor.}
Invent. Math. 32 (1976), no. 1, 1--31.

\bibitem{Kr}
Marc Krawitz,
{\em FJRW rings and Landau-Ginzburg Mirror Symmetry,}
arXiv:0906.0796 [math.AG]




\bibitem{KS}
M. Kreuzer, H. Skarke,
{\em On the classification of quasihomogeneous functions,}
Comm. Math. Phys. 150 (1992), 137--147.

\bibitem{LLS}
C. Li, S. Li, K. Saito,
{\em Primitive forms via polyvector fields,}
arXiv:1311.1659.

\bibitem{LLSS}
C. Li, S. Li, K. Saito, Y. Shen,
{\em Mirror symmetry for exceptional unimodular singularities.}
J. Eur. Math. Soc. 19 (2017), 1189--1229.

\bibitem{LLSZ}
J. Li, W-P. Li, Y. Shen, J. Zhou,
{\em A genus-one FJRW invariant via two methods.}
arXiv:2006.16518 [math.AG]

\bibitem{LSZ}
J. Li, Y. Shen, J. Zhou,
{\em Higher Genus FJRW Invariants of a Fermat Cubic,}
arXiv:2001.00343.
To appear in Geometry \& Topology.

\bibitem{LiT}
J. Li, G. Tian
{\em Virtual moduli cycles and Gromov-Witten invariants of
algebraic varieties,}
J. Amer. Math. Soc. 11, 119--174 (1998)


\bibitem{Liu1}
X. Liu,
{\em Quantum product on the big phase space and the Virasoro conjecture,}
Advances in Mathematics 169, 313-375(2002)

\bibitem{Liu2}
X. Liu,
{\em Elliptic Gromov-Witten Invariants And Virasoro Conjecture,}
Communications in Mathematical Physics 216(2001), 705-728.

\bibitem{LT}
X. Liu, G. Tian,
{\em Virasoro constraints for quantum cohomology.}
J. Differential Geom. 50 (1998), no. 3, 537--590.



\bibitem{Mil}
T. Milanov,
{\em Analyticity of the total ancestor potential in singularity theory,}
Advances in Math. 255 (2014), 217--241.

\bibitem{MS}
T. Milanov, Y. Shen,
{\em Global mirror symmetry for invertible simple elliptic singularities,}
Ann. Inst. Fourier (Grenoble) 66 (2016), 271--330.

\bibitem{OOP}
A. Oblomkov, A. Okounkov, R. Pandharipande,
{\em GW/PT descendent correspondence via vertex operators.}
Comm. Math. Phys. 374 (2020), no. 3, 1321--1359.

\bibitem{MOOP}
M. Moreira, A. Oblomkov, A. Okounkov, R. Pandharipande,
{\em Virasoro constraints for stable pairs on toric 3-folds.}
arXiv:2008.12514 [math.AG]


\bibitem{OP}
A. Okounkov, R. Pandharipande,
{\em Virasoro constraints for target curves.}
Invent. Math. 163 (2006), no. 1, 47--108.




\bibitem{PV}
A. Polishchuk, A. Vaintrob,
{\em Matrix factorizations and cohomological field theories.}
J. Reine Angew. Math. 714 (2016), 1--122.

\bibitem{Sai}
K. Saito,
{\em Primitive forms for a universal unfolding of a function with an isolated critical point.}
 J. Fac. Sci. Univ. Tokyo Sect. IA Math. 28 (1981), no. 3, 775--792 (1982).

\bibitem{Sai-ses}
K. Saito,
{\em Quasihomogene isolierte Singularit\"aten von Hyperfl\"achen.}
Invent. Math. 14 (1971), 123--142.



\bibitem{SZ}
Y. Shen, J. Zhou,
{\em LG/CY correspondence for elliptic orbifold curves via modularity.}
J. Differential Geom. 109 (2018), no. 2, 291--336.

\bibitem{St}
J. H. M. Steenbrink,
{\em Mixed Hodge structure on the vanishing cohomology.}
Real and complex singularities (Proc. Ninth Nordic Summer School/NAVF Sympos. Math., Oslo, 1976), pp. 525--563. Sijthoff and Noordhoff, Alphen aan den Rijn, 1977.

\bibitem{T}
C. Teleman,
{\em The structure of 2D semi-simple field theories,}
Invent. Math. 188 (2012), 525--588.

\bibitem{Wit-conj}
E. Witten,
{\em Two-dimensional gravity and intersection theory on moduli space.}
Surveys in differential geometry (Cambridge, MA, 1990), 243--310, Lehigh Univ., Bethlehem, PA, 1991.

\bibitem{Wit-eq}
{\em Algebraic geometry associated with matrix models of two-dimensional gravity,}
Topological methods in modern mathematics (Stony Brook, NY, 1991), Publish or Perish, Houston, TX, 1993,
235--269.

\bibitem{Wit}
E. Witten,
{\em Phases of N = 2 theories in two dimensions,}
Nuclear Physics B 403 (1993), 159--222.

\bibitem{Zag}
D. Zagier,
{\em Elliptic modular forms and their applications,}
The 1-2-3 of modular forms, Universitext, Springer, Berlin, 2008, pp. 1--103.

\end{thebibliography}
\end{document}